\newcommand{\highlight}[1]{{\textcolor{black}{#1}}}
\newcommand{\highlightTwo}[1]{{\textcolor{black}{#1}}}
\newcommand{\Uh}{\widehat{U}}
\newcommand{\Ac}{{\cal A}}
\newcommand{\Cc}{{\cal C}}
\newcommand{\Fc}{{\cal F}}
\newcommand{\Jc}{{\cal J}}
\newcommand{\Lc}{{\cal L}}
\newcommand{\Mc}{{\cal M}}
\newcommand{\Nc}{{\cal N}}
\newcommand{\Rc}{{\cal R}}
\newcommand{\Uc}{{\cal U}}
\newcommand{\Ex}{\mathbb{E}}
\newcommand{\Var}{{\mathbb{V}{\rm ar}}}
\newcommand{\vare}{\varepsilon}
\def\zal2{z_{\alpha/2}}
\newcommand{\R}{{\mathbb{R}}}
\newcommand{\Gy}{\Gamma_{\rm y}}
\newcommand{\Gobs}{\Gamma_{\rm obs}}
\newcommand{\Gpr}{\Gamma_{\rm pr}}
\newcommand{\Gauss}{\Nc}
\newcommand{\Gprs}{S_{\rm pr}} 
\newcommand{\Gprst}{S_{\rm pr}^\top} 
\newcommand{\Gposhs}{\widehat{S}_{\rm pos}}
\newcommand{\Gposs}{S_{\rm pos}}
\newcommand{\Gys}{S_{\rm y}}
\newcommand{\Hhs}{S_{ \widehat{H}}}
\newcommand{\Gobss}{S_{\rm obs}}
\newcommand{\Gpos}{\Gamma_{\rm pos}}
\newcommand{\nupos}{\nu_{\rm pos}}
\newcommand{\nuposh}{\widehat{\nu}_{\rm pos}}
\newcommand{\Gposh}{\widehat{\Gamma}_{\rm pos}}
\newcommand{\mupos}{\mu_{\rm pos}(y)}
\newcommand{\muposr}{\mu_{\rm pos}^{(r)}(y)}
\newcommand{\muposhr}{\widehat{\mu}_{\rm pos}^{(r)}(y)}
\newcommand{\df}{d_{\Fc}}
\newcommand{\trace}{\mathrm{tr}}
\newtheorem{remark}{Remark}
\title{Optimal low-rank approximations of Bayesian linear inverse problems }
\author{Alessio Spantini\footnotemark[1] \and Antti
  Solonen{\footnotemark[1] \footnotemark[4]} \and Tiangang Cui\footnotemark[1] \and James
  Martin\footnotemark[2] \and Luis Tenorio\footnotemark[3] \and Youssef
  Marzouk\footnotemark[1]}
\begin{document}

\maketitle

\renewcommand{\thefootnote}{\fnsymbol{footnote}}

\footnotetext[1]{Department of Aeronautics and Astronautics, Massachusetts Institute of Technology, Cambridge, MA
02139, USA, \texttt{\{spantini, solonen, tcui, ymarz\}@mit.edu}.}

\footnotetext[2]{Institute for Computational Engineering and Sciences,
  University of Texas at Austin, Austin, TX 78712 USA, \texttt{jmartin@ices.utexas.edu}.}

\footnotetext[3]{Applied Mathematics and Statistics, Colorado School of Mines, Golden, CO 80401, USA, \texttt{ltenorio@mines.edu}.}

\footnotetext[4]{Lappeenranta University of Technology, Department of Mathematics and Physics, 53851 Lappeenranta, Finland.}

\renewcommand{\thefootnote}{\arabic{footnote}}

\begin{abstract} 
  In the Bayesian approach to inverse problems, data are often
  informative, relative to the prior, only on a low-dimensional
  subspace of the parameter space. Significant computational savings
  can be achieved by using this subspace to characterize and
  approximate the posterior distribution of the parameters.
  We first investigate approximation of the posterior covariance
  matrix as a low-rank update of the prior covariance matrix. We prove
  optimality of a particular update, based on the leading
  eigendirections of the matrix pencil defined by the Hessian of the negative 
  log-likelihood and the prior precision, for a broad class of loss
  functions. This class includes the F\"{o}rstner metric for symmetric
  positive definite matrices, as well as the Kullback-Leibler
  divergence and the Hellinger distance between the associated
  distributions.
  We also propose two fast approximations of the posterior mean and
  prove their optimality with respect to a weighted Bayes risk under
  squared-error loss. 
  \highlightTwo{
  These approximations are deployed in an
   offline-online manner, where a more costly but data-independent
   offline calculation is followed by fast online evaluations.  
  As a result, these approximations are particularly useful
  when repeated posterior mean evaluations are required for multiple
  data sets.
  }
  We demonstrate our theoretical results with several numerical
  examples, including high-dimensional X-ray tomography and an inverse
  heat conduction problem. In both of these examples, the intrinsic
  low-dimensional structure of the inference problem can be exploited
  while producing results that are essentially indistinguishable from
  solutions computed in the full space.

 \end{abstract}

\begin{keywords}
{inverse problems},
{Bayesian inference},
{low-rank approximation},
{covariance approximation},
{F\"{o}rstner-Moonen metric},
{posterior mean approximation},
{Bayes risk},
{optimality}
\end{keywords}

\pagestyle{myheadings}
\thispagestyle{plain}

\markboth{\MakeUppercase{Spantini et al.}}{\MakeUppercase{Optimal low-rank approximations}}

\section{Introduction}
\label{s:intro}

In the Bayesian approach to inverse problems, the parameters of interest are treated as random variables, endowed with a prior probability distribution that encodes information available before any data are observed. Observations are modeled by their joint probability distribution conditioned on the parameters of interest, which defines the likelihood function and incorporates the forward model and a stochastic description of measurement or model errors. The prior and likelihood then combine to yield a probability distribution for the parameters conditioned on the observations, i.e., the posterior distribution. While this formulation is quite general, essential features of inverse problems bring additional structure to the Bayesian update. The prior distribution often encodes some kind of smoothness or correlation among the inversion parameters; observations typically are finite, few in number, and corrupted by noise; and the observations are indirect, related to the inversion parameters by the action of a forward operator that destroys some information. A key consequence of these features is that the data may be informative, relative to the prior, only on a \textit{low-dimensional subspace} of the entire parameter space. Identifying and exploiting this subspace---to design approximations of the posterior distribution and related Bayes estimators---can lead to substantial computational savings.

In this paper we investigate approximation methods for finite-dimensional Bayesian linear inverse problems with Gaussian measurement and prior distributions. We characterize approximations of the posterior distribution that are structure-exploiting and that are \textit{optimal} in a sense to be defined below. Since the posterior distribution is Gaussian, it is completely determined by its mean and covariance. We therefore focus on approximations of these posterior characteristics. Optimal approximations will reduce computation and storage requirements for high-dimensional inverse problems, and will also enable fast computation of the posterior mean in a many-query setting.

We consider approximations of the posterior covariance matrix in the form of low-rank negative updates of the prior covariance matrix. This class of approximations exploits the structure of the prior-to-posterior update, \highlight{and also arises naturally in Kalman filtering techniques (e.g., \cite{auvinen2009large,auvinen2010variational,solonen2012variational})}; the challenge is to find an \textit{optimal} update within this class, and to define in what sense it is optimal.  We will argue that a suitable loss function with which to define optimality is the F\"{o}rstner metric \cite{forstner2003metric} for symmetric positive definite matrices, and will show that this metric generalizes to a broader class of loss functions that emphasize relative differences in covariance. We will derive the optimal low-rank update for this entire class of loss functions. In particular, we will show that the prior covariance matrix should be updated along the leading generalized eigenvectors of the pencil $(H,\Gpr^{-1})$ defined by the Hessian of the negative log-likelihood and the prior precision matrix.
If we assume exact knowledge of the posterior mean, then our results extend to optimality statements between distributions (e.g., optimality in Kullback-Leibler divergence and in Hellinger distance).
The form of this low-rank update of the prior is not new \cite{bui2012extreme,bui2013computational,flath2011fast,martin2012stochastic}, but previous work has not shown whether---and if so, in exactly what sense---it yields optimal approximations of the posterior. A key contribution of this paper is to establish and explain such optimality.

\highlight{Properties of the generalized eigenpairs of $(H,\Gpr^{-1})$ and related matrix pencils have been studied previously in the literature, especially in the context of classical regularization techniques for linear inverse problems\footnote{In the framework of Tikhonov regularization \cite{tikhonov1963solution}, the regularized estimate coincides with the posterior mean of the Bayesian linear model we consider here, provided that the prior covariance matrix is chosen appropriately.} \cite{van1976generalizing,paige1981towards,hansen1989regularization,hanke1993regularization,dykes2014simplified}.  The joint action of the log-likelihood Hessian and the prior precision matrix has also been used in related regularization methods \cite{calvetti2005priorconditioners,calvetti2007preconditioned, calvetti2012left, homa2013bayesian}. However, these efforts have not been concerned with the posterior covariance matrix or with its optimal approximation, since this matrix is a property of the Bayesian approach to inversion.}
One often justifies the assumption that the posterior mean is exactly known by arguing that it can easily be computed as the solution of a \highlight{regularized least-squares problem
\cite{hestenes1952methods,paige1982algorithm,akccelik2006parallel,meng2014lsrn,barrett1994templates}};
 indeed, evaluation of the posterior mean to machine precision is now feasible even for million-dimensional parameter spaces \cite{bui2012extreme}. If, however, one needs multiple evaluations of the posterior mean for different realizations of the data (e.g., in an online inference context), then solving a linear system to determine the posterior mean may not be the most efficient strategy. A second goal of our paper is to address this problem. We will propose two computationally efficient approximations of the posterior mean based on: \textit{(i)} evaluating a low-rank affine function of the data; 
or \textit{(ii)} using a low-rank update of the prior covariance matrix in the exact formula for the posterior mean. The optimal approximation in each case is defined as the minimizer of the Bayes risk for a squared-error loss weighted by the posterior precision matrix. We provide explicit formulas for these optimal approximations and show that they can be computed by exploiting the optimal posterior covariance approximation described above. Thus, given a new set of data, computing an optimal approximation of the posterior mean becomes a computationally trivial task. 

Low-rank approximations of the posterior mean that minimize the Bayes risk for squared-error loss 
have been proposed in \cite{chung2014efficient,chung2014optimal,chung2012optimal,chung2011designing,chung2013computing} for a general
non-Gaussian case. 
Here, instead we develop analytical results for squared-error loss weighted by the posterior precision matrix. This choice of norm reflects the idea that approximation errors in directions of low posterior variance should be penalized more strongly than errors in high-variance directions, as we do not want the approximate posterior mean to fall outside the bulk of the posterior probability distribution. Remarkably, in this case, the optimal approximation only requires the leading eigenvectors and eigenvalues of a single eigenvalue problem. This is the same eigenvalue problem we solve to obtain an optimal approximation of the posterior covariance matrix, and thus we can efficiently obtain both approximations at the same time.

While the efficient solution of large-scale linear-Gaussian Bayesian inverse problems is of standalone interest \cite{flath2011fast}, optimal approximations of Gaussian posteriors are also a building block for the solution of nonlinear Bayesian inverse problems. For example, the stochastic Newton Markov chain Monte Carlo (MCMC) method \cite{martin2012stochastic} uses Gaussian proposals derived from local linearizations of a nonlinear forward model; the parameters of each Gaussian proposal are computed using the optimal approximations analyzed in this paper. To tackle even larger nonlinear inverse problems, \cite{bui2012extreme} uses a Laplace approximation of the posterior distribution wherein the Hessian at the mode of the log-posterior density is itself approximated using the present approach. Similarly, approximations of local Gaussians can facilitate the construction of a nonstationary Gaussian process whose mean directly approximates the posterior density \cite{bui2012adaptive}.  Alternatively, \cite{cui2014likelihood} combines data-informed directions derived from local linearizations of the forward model---a direct extension of the posterior covariance approximations described in the present work---to create a global \textit{data-informed subspace}. A computationally efficient approximation of the posterior distribution is then obtained by restricting MCMC to this subspace and treating complementary directions analytically. Moving from the finite to the infinite-dimensional setting, the same global data-informed subspace is used to drive efficient dimension-independent posterior sampling for inverse problems in \cite{CLM_2014}.

Earlier work on dimension reduction for Bayesian inverse problems used the Karhunen-Lo\`{e}ve expansion of the prior distribution \cite{marzouk2009dimensionality, Li:2006} to describe the parameters of interest. To reduce dimension, this expansion is truncated; this step renders both the prior and posterior distributions singular---i.e., collapsed onto the prior mean---in the neglected directions. Avoiding large truncation errors then requires that the prior distribution impose significant smoothness on the parameters, so that the spectrum of the prior covariance kernel decays quickly. In practice, this requirement restricts the choice of priors. Moreover, this approach relies entirely on properties of the prior distribution and does not incorporate the influence of the forward operator or the observational errors. Alternatively, \cite{lieberman2010parameter} constructs a reduced basis for the parameter space via greedy model-constrained sampling, but this approach can also fail to capture posterior variability in directions uninformed by the data. Both of these earlier approaches seek reduction in the overall description of the parameters. This notion differs fundamentally from the dimension reduction technique advocated in this paper, where low-dimensional structure is sought in the \textit{change} from prior to posterior. 

The rest of this paper is organized as follows. In Section \ref{sec:theory} we introduce the posterior covariance approximation problem and derive the optimal prior-to-posterior update with respect to a broad class of loss functions. The structure of the optimal posterior covariance matrix approximation is examined in Section \ref{sec:interpretation}.  Several interpretations are given in this section, including an equivalent reformulation of the covariance approximation problem as an optimal \textit{projection} of the likelihood function onto a lower dimensional subspace. In Section \ref{sec:mean} we characterize optimal approximations of the posterior mean. In Section \ref{s:examples} we provide several numerical examples. Section \ref{s:conclusions} offers concluding remarks. 
Appendix \ref{sec:proofs} collects proofs of many of the theorems stated throughout the paper, 
along with additional technical results.

\section{Optimal approximation of the posterior covariance matrix} \label{sec:theory}

Consider the Bayesian linear model defined by a Gaussian likelihood and a Gaussian
prior with a non-singular covariance matrix $\Gpr \succ 0$  and, without loss of generality,
zero mean:
\begin{equation}\label{eq:GBLM}
y\mid x \sim \Gauss(Gx,\,\Gobs),\quad x\sim \Gauss(0,\,\Gpr).
\end{equation}
\highlight{
Here $x$ represents the parameters to be inferred, $G$ is the linear
forward operator, and $y$ are the observations, with $\Gobs \succ
0$. The statistical model \eqref{eq:GBLM} also follows from:
\begin{equation*}
  y = G x + \varepsilon
\end{equation*}
where $\varepsilon \sim \mathcal{N}(0,\Gobs)$  is independent of $x$.
}
It is easy to see that the posterior distribution is again Gaussian
(see, e.g., \cite{carlin2009bayesian}):\,
\(
x \mid  y \sim \Gauss(\mupos, \Gpos),
\)
with mean and covariance matrix given by
\begin{equation}\label{eq:postmoms}
\mupos = \Gpos \,G^\top \Gobs^{-1}\,y\quad \text{and} \quad \Gpos = \left(H + \Gpr^{-1}\right)^{-1},
\end{equation}
where 
\begin{equation}\label{eq:hessian}
H= G^\top\Gobs^{-1}G
\end{equation}
is the Hessian of the negative log-likelihood (i.e., the Fisher
information matrix). Since the posterior is Gaussian, the posterior
mean coincides with the posterior mode: $ \mupos = {\arg\, \max}_x\,
\pi_{\rm pos}(x;y)$, where $\pi_{\rm pos}$ is the posterior
density. Note that the posterior covariance matrix does not depend on
the data.

\subsection{Defining the approximation class}
We will seek an approximation, $\Gposh$, of the posterior covariance
matrix that is optimal in a class of matrices to be defined
shortly. As we can see from \eqref{eq:postmoms}, the posterior
precision matrix $\Gpos^{-1}$ is a non-negative update of the prior
precision matrix
\highlight{
 $\Gpr^{-1}$: $\Gpos^{-1} = \Gpr^{-1} + ZZ^\top$,
where $ZZ^\top = H$.}
Similarly, using Woodbury's identity we can write
$\Gpos$ as a non-positive update of $\Gpr$:
\(
\Gpos = \Gpr - KK^\top,
\)
where
\(
KK^\top = \Gpr \,G^\top\Gy^{-1} G\, \Gpr
\) 
and $\Gy = \Gobs + G\,\Gpr \,G^\top$ is the covariance matrix of the marginal
distribution of $y$ \cite{kaipio2005statistical}. This update of $\Gpr$ is negative semidefinite
because the data add information: the posterior variance in any direction is always smaller than
the corresponding prior variance. 
 Moreover, the update is usually low rank
for exactly the reasons described in the introduction: there are
directions in the parameter space along which the data are not very
informative, relative to the prior. For instance, $H$ might have a
quickly decaying spectrum \cite{bui2012analysis, schillings2014scaling}. 
Note, however, that $\Gpos$ itself might \textit{not} be low-rank. Low-rank
structure, if any, lies in the update of $\Gpr$ that yields $\Gpos$.
Hence, a natural class of matrices for approximating $\Gpos$ is the set
of negative semi-definite updates of $\Gpr$, with a fixed maximum
rank, that lead to positive definite matrices:
\begin{equation} \label{eq:class_M}
    {\Mc}_r=\left\{ \Gpr-KK^\top\succ0  :\mathrm{rank}(K)\le r  \right\}.
\end{equation}
This class of approximations of the posterior covariance matrix takes
advantage of the structure of the prior-to-posterior update.

\subsection{Loss functions}
Optimality statements regarding the approximation of a covariance
matrix require an appropriate notion of distance between symmetric
positive definite (SPD) matrices. We shall use the metric introduced
by F\"{o}rstner and Moonen \cite{forstner2003metric}, which is derived
from a canonical invariant metric on the cone of real SPD matrices
and is defined as follows: the F\"{o}rstner distance, $\df(A,B)$,
between a pair of SPD matrices, $A$ and $B$, is given by
\[
\df^2(A,B)=\trace \left [ \,\ln^2 (\,A^{-1/2}BA^{-1/2}\,)\, \right ]=\sum_i \ln^2(\sigma_i), 
\]
where $(\sigma_i)$ is the sequence of generalized eigenvalues of the pencil
$(A,B)$.  The F\"{o}rstner metric satisfies the following important
invariance properties:
\begin{equation}\label{eq:dfprop}
	\quad \df(A,\, B )=\df(A^{-1},\, B^{-1} ),\quad \mbox{and}\quad \df(A,\, B )=\df(M A M^\top,\, M B M^\top )
\end{equation}
for any nonsingular matrix $M$. 
Moreover, $\df$ treats under- and over-approximations similarly in the sense that
\(
\df(\Gpos,\,\alpha\Gposh)\to \infty
\) 
as $\alpha\to 0$ and as $\alpha\to \infty$.\footnote{This behavior is
  shared by Stein's loss function, which has been proposed to assess
  estimates of a covariance matrix \cite{james1961estimation}. Stein's
  loss function is just the Kullback-Leibler distance between two
  Gaussian distributions with the same mean (see
  \eqref{eq:KL_formula}), but it is not a metric for SPD matrices.}
Note that the metric induced by the Frobenius norm does not
satisfy any of the aforementioned invariance properties. In addition,
it penalizes under- and over-estimation differently.

We will show that our posterior covariance matrix approximation is
optimal not only in terms of the F\"{o}rstner metric, but also in
terms of the following more general class, $\Lc$, of loss functions
for SPD matrices.  
\medskip
\begin{definition}[Loss functions]
  \label{def:lossfunc}
  The class $\Lc$ is defined as the collection of functions of the
  form
  \begin{equation} \label{eq:form_loss_fun}
    L(A , B ) = \sum_{i=1}^n f(\sigma_i) ,
  \end{equation}
  where $A$ and $B$ are SPD matrices, $(\sigma_i)$ are the generalized
  eigenvalues of the pencil $(A,B)$, and
  \begin{equation}\label{eq:Uclass}
    f\in \Uc = \{ g\in\Cc^1(\R_+):g'(x)(1-x)<0\,\,\mathrm{for}\,x\neq
    1, \ \mathrm{and} \ \lim_{x\rightarrow\infty} g(x) = \infty  \}.
  \end{equation}
\end{definition}
Elements of $\Uc$ are differentiable real-valued functions
defined on the positive axis that decrease on $x<1$,
increase on $x>1$, and tend to infinity as $x\rightarrow
\infty$. The squared F\"{o}rstner metric belongs to the class of loss
functions defined by \eqref{eq:form_loss_fun}, whereas the
distance induced by the Frobenius norm does not.

Lemma \ref{lemma:equivalence}, whose proof can be found in Appendix
\ref{sec:proofs}, justifies the importance of the class $\Lc$.  In
particular, it shows that optimality of the covariance matrix
approximation with respect to any loss function in $\Lc$ leads to an
optimal approximation of the posterior distribution using 
a Gaussian (with the same mean)
in terms of other familiar criteria
used to compare probability measures, such as the Hellinger distance
and the Kullback-Leibler (K-L) divergence \cite{pardo2005statistical}.
More precisely, we have the following result: \medskip

\begin{lemma}[Equivalence of approximations] \label{lemma:equivalence}
  If $L\in \Lc$, then a matrix $\Gposh \in {\Mc}_r$
  minimizes the Hellinger distance and the K-L divergence between
  $\Gauss(\mupos,\Gpos)$ and the approximation $\Gauss(\mupos,\Gposh)$ iff it
  minimizes $L(\, \Gpos ,\Gposh\,)$.
\end{lemma}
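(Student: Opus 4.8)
\emph{Proof idea.} Since $\Gauss(\mupos,\Gpos)$ and $\Gauss(\mupos,\Gposh)$ share the mean $\mupos$, both the Kullback--Leibler (K--L) divergence and the Hellinger distance between them depend on the pair $(\Gpos,\Gposh)$ only through the generalized eigenvalues $(\sigma_i)$ of the pencil $(\Gpos,\Gposh)$. The plan is to (i) express each of these two quantities in terms of the $\sigma_i$ and exhibit it as --- or as a strictly increasing function of --- an element of $\Lc$; and then (ii) combine this with the fact that all loss functions in $\Lc$ are minimized over $\Mc_r$ by the same matrices. Step (ii) carries the real weight, and I single it out at the end.

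\emph{K--L divergence.} The closed form for Gaussians with equal means gives, after simultaneously diagonalizing the pencil $(\Gpos,\Gposh)$,
\[
  D_{\rm KL}\bigl(\Gauss(\mupos,\Gpos)\,\|\,\Gauss(\mupos,\Gposh)\bigr)
  =\tfrac12\Bigl[\trace(\Gposh^{-1}\Gpos)-n+\ln\det(\Gposh\,\Gpos^{-1})\Bigr]
  =\tfrac12\sum_i\bigl(\sigma_i-1-\ln\sigma_i\bigr).
\]
Hence $D_{\rm KL}=\sum_i f_{\rm KL}(\sigma_i)$ with $f_{\rm KL}(x)=\tfrac12(x-1-\ln x)$, and a one-line computation gives $f_{\rm KL}'(x)(1-x)=-(x-1)^2/(2x)<0$ for $x\neq1$ and $f_{\rm KL}(x)\to\infty$, so $f_{\rm KL}\in\Uc$ and $D_{\rm KL}=L_{\rm KL}(\Gpos,\Gposh)\in\Lc$; twice this expression is Stein's loss, settling the parenthetical claim in the footnote. (The reversed K--L divergence is treated the same way, now with $f(x)=\tfrac12(1/x-1+\ln x)\in\Uc$.)

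\emph{Hellinger distance.} This one needs slightly more care, which is why it is stated separately. Writing the affinity as $1-H^2=\det(\Gpos)^{1/4}\det(\Gposh)^{1/4}\big/\det\!\bigl(\tfrac12(\Gpos+\Gposh)\bigr)^{1/2}$ and factoring $\Gpos+\Gposh$ through $\Gposh^{1/2}$ so that the eigenvalues of $\Gposh^{-1}\Gpos$ appear,
\[
  1-H^2=\prod_i\Bigl(\frac{2\sqrt{\sigma_i}}{\sigma_i+1}\Bigr)^{\!1/2}
        =\exp\!\Bigl(-\tfrac12\sum_i g(\sigma_i)\Bigr),\qquad g(x)=\ln\frac{x+1}{2\sqrt{x}}.
\]
Again $g'(x)(1-x)=-(x-1)^2/\bigl(2x(x+1)\bigr)<0$ for $x\neq1$ and $g(x)\to\infty$, so $g\in\Uc$ and $L_H(\Gpos,\Gposh):=\sum_i g(\sigma_i)\in\Lc$. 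Since $t\mapsto1-e^{-t/2}$ is strictly increasing and $H^2=1-e^{-L_H/2}$, minimizing the Hellinger distance over $\Mc_r$ is equivalent to minimizing $L_H$. The only non-routine point is that $H^2$ is not additive over the $\sigma_i$; passing to the log-affinity is what places it inside $\Lc$.

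\emph{Common minimizer, and the obstacle.} It remains to show that the minimizer of $L(\Gpos,\cdot)$ over $\Mc_r$ does not depend on which $L\in\Lc$ is used. Writing $\Gpr=WW^\top$ and $W^\top HW=\operatorname{diag}(\delta_i)$ with $\delta_i\ge0$ (the generalized eigenpairs of $(H,\Gpr^{-1})$), the F\"{o}rstner invariance \eqref{eq:dfprop} --- which is just the congruence invariance of generalized eigenvalues, hence holds for every $L\in\Lc$ --- reduces the problem to minimizing $\sum_i f(\sigma_i)$, where the $\sigma_i$ are the eigenvalues of $D+S$ with $D=\operatorname{diag}\bigl(1/(1+\delta_i)\bigr)\preceq\Ib$ fixed and $S\succeq0$ of rank at most $r$ (via Woodbury, $\Mc_r$ corresponds exactly to such perturbations). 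Weyl interlacing fixes which $\sigma_i$ can be moved and by how much, and since \emph{every} $f\in\Uc$ strictly decreases on $(0,1)$ and strictly increases on $(1,\infty)$, the optimal choice --- lifting the $r$ smallest of the $1/(1+\delta_i)$ exactly to $1$ along the leading generalized eigenvectors of $(H,\Gpr^{-1})$ --- is common to all $f\in\Uc$; this is precisely the optimal low-rank update established later in this section. Granting it, a matrix in $\Mc_r$ minimizes the K--L divergence iff it minimizes $L_{\rm KL}$ iff it minimizes $L$, and it minimizes the Hellinger distance iff it minimizes $L_H$ iff it minimizes $L$ --- the asserted equivalence. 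I expect this common-minimizer step to be the main obstacle; the two divergence computations are routine linear algebra, modulo the mild detour for the Hellinger distance.
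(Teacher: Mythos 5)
Your proposal is correct and follows essentially the same route as the paper: express the K--L divergence as $\sum_i(\sigma_i-1-\ln\sigma_i)/2$ and the Hellinger affinity as a monotone function of $\sum_i\ln\bigl(2+\sigma_i+1/\sigma_i\bigr)/4$ (your $g$ differs from the paper's summand only by an affine rescaling), check that the resulting summand functions lie in $\Uc$, and then invoke the fact that all losses in $\Lc$ share the same minimizers over $\Mc_r$, which is exactly the content of Theorem \ref{Thm:main_th} that the paper also cites at this point. Your closing observation that the common-minimizer step carries the real weight is accurate --- that is precisely why the paper proves Theorem \ref{Thm:main_th} via Lemma \ref{lemma:diagcase} and then derives this lemma as a corollary.
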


\medskip
\begin{remark}{\rm 
We note that neither the Hellinger distance nor the K-L divergence between the distributions
$\Gauss(\mupos,\Gpos)$ and $\Gauss(\mupos,\Gposh)$  
depends on the data $y$. Optimality in distribution does not necessarily hold when the 
posterior means are different.}
\end{remark}

\subsection{Optimality results} 
We are now in a position to state one of the main results of the
paper. For a proof see Appendix \ref{sec:proofs}.
\medskip
\begin{theorem} [Optimal posterior covariance approximation] \label{Thm:main_th} 
\highlight{
Let $(\delta_i^2,\widehat{w}_i)$ be the generalized 
  eigenvalue-eigenvector pairs of the pencil:
\begin{equation}\label{eq:precondH}
 (H,\Gpr^{-1}),
\end{equation}
with the ordering $\delta_i^2\ge\delta_{i+1}^2$, and $H=G^\top \Gobs^{-1} G$ as in \eqref{eq:hessian}. Let $L$ be a loss function in the class $\Lc$ defined in \eqref{eq:form_loss_fun}.
}
 Then:
\begin{itemize}
\item[(i)] A minimizer, $\Gposh$, of the loss, $L$, between $\Gpos$ and an element of ${\Mc}_r$ is given by:
	\highlight{
  \begin{equation} \label{minimizer_theorem}
    \Gposh=\Gpr-KK^\top, \quad KK^\top=\sum_{i=1}^r \delta_i^2 \left( 1 + \delta_i^2 \right)^{-1} \widehat{w}_i \widehat{w}_i^\top.
  \end{equation}
  }
  The corresponding minimum  loss is given by:
  \begin{equation} \label{eq:error_estimate}
    L( \Gposh ,\Gpos )= f\left( 1 \right) r+\sum_{i>r}  f(\,{1}/{(1+\delta_i^2)}\,).
  \end{equation}    
\item[(ii)] The minimizer \eqref{minimizer_theorem} is unique if the
  first $r$ 
   eigenvalues of $(H,\Gpr^{-1})$ are different.
\end{itemize}
\end{theorem}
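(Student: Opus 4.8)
The plan is to exploit congruence invariance to reduce the optimization to a diagonal one, and then solve the diagonal problem with a Courant--Fischer / Cauchy-interlacing argument. The enabling observation is that \emph{every} $L\in\Lc$ is congruence invariant, $L(MAM^\top,MBM^\top)=L(A,B)$ for nonsingular $M$: this is immediate from Definition~\ref{def:lossfunc}, since a congruence leaves the generalized eigenvalues of the pencil unchanged, so \eqref{eq:dfprop} extends from $\df$ to all of $\Lc$.

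\emph{Reduction.} Choose the generalized eigenvectors $\widehat{w}_i$ of the pencil $(H,\Gpr^{-1})$ to be $\Gpr^{-1}$-orthonormal and collect them as the columns of $W$, so that $W^\top\Gpr^{-1}W=I$ and $W^\top HW=\Lambda:=\mathrm{diag}(\delta_1^2,\dots,\delta_n^2)$. With $M=W^{-1}$ one gets $M\Gpr M^\top=I$, $M\Gpos M^\top=\bigl(W^\top(H+\Gpr^{-1})W\bigr)^{-1}=(\Lambda+I)^{-1}=:D=\mathrm{diag}(d_1,\dots,d_n)$ with $d_i=(1+\delta_i^2)^{-1}$ and $0<d_1\le\cdots\le d_n\le 1$ (since the $\delta_i^2$ are nonincreasing), and $M(\Gpr-KK^\top)M^\top=I-N$ with $N=(MK)(MK)^\top$ of the same rank as $K$. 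By the invariance above, minimizing $L(\Gposh,\Gpos)$ over $\Mc_r$ is equivalent to minimizing $\sum_i f(\sigma_i)$ over all symmetric $N\succeq0$ with $\mathrm{rank}(N)\le r$ and $I-N\succ0$, where $\sigma_1\le\cdots\le\sigma_n$ are the generalized eigenvalues of the pencil relating $D$ to $I-N$, i.e.\ the sorted eigenvalues of $(I-N)^{-1/2}D(I-N)^{-1/2}$, with Rayleigh characterization $\sigma_k=\min_{\dim S=k}\,\max_{0\ne v\in S}\frac{v^\top Dv}{v^\top(I-N)v}$.

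\emph{Lower bound and attainment.} Since $I-N\preceq I$, replacing the denominator $v^\top(I-N)v$ by $v^\top v$ only lowers the quotient, so $\sigma_k\ge d_k$ (the $k$-th smallest eigenvalue of $D$). Restricting the outer minimum instead to subspaces $S\subseteq\ker N$ --- legitimate when $k\le\dim\ker N$, in particular when $k\le n-r$ --- makes $v^\top(I-N)v=v^\top v$ on $S$, and Cauchy interlacing for the compression of $D$ to $\ker N$, of codimension $\le r$, gives $\sigma_k\le d_{k+r}$ for $1\le k\le n-r$. Hence $\sigma_k\le d_{k+r}\le 1$ for $k\le n-r$; as $f\in\Uc$ is nonincreasing on $(0,1]$ with global minimum at $1$ (both from $f'(x)(1-x)<0$ in \eqref{eq:Uclass}), this gives $f(\sigma_k)\ge f(d_{k+r})$ for $k\le n-r$ and $f(\sigma_k)\ge f(1)$ for $k>n-r$, so $\sum_k f(\sigma_k)\ge \sum_{i>r}f(d_i)+r\,f(1)=f(1)\,r+\sum_{i>r}f\!\left(1/(1+\delta_i^2)\right)$, matching \eqref{eq:error_estimate}. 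Equality holds for $N^\star=\sum_{i=1}^r(1-d_i)e_ie_i^\top$ (symmetric, $\succeq0$, rank $\le r$, $I-N^\star=\mathrm{diag}(d_1,\dots,d_r,1,\dots,1)\succ0$), whose pencil with $D$ is diagonal with eigenvalues $1$ (with multiplicity $r$) and $d_{r+1},\dots,d_n$. Undoing the congruence, $\Gposh=W(I-N^\star)W^\top=\Gpr-KK^\top$ with $KK^\top=WN^\star W^\top=\sum_{i=1}^r(1-d_i)\widehat{w}_i\widehat{w}_i^\top=\sum_{i=1}^r\delta_i^2(1+\delta_i^2)^{-1}\widehat{w}_i\widehat{w}_i^\top$, which is \eqref{minimizer_theorem}; this proves~(i). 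For~(ii) I would inspect when the chain of inequalities is tight: equality forces $\sigma_k=d_{k+r}$ for $k\le n-r$ and $\sigma_k=1$ for $k>n-r$, using that $f$ is \emph{strictly} monotone on each side of $1$, and when the leading eigenvalues of $(H,\Gpr^{-1})$ are distinct this determines both $\ker N$ and the weights $1-d_i$, forcing $N=N^\star$.

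\emph{Main obstacle.} The delicate step is the upper bound $\sigma_k\le d_{k+r}$: one has to obtain the shift exactly $r$ (not $2r$), and recognize that the generalized eigenvalues should be sorted \emph{increasingly} and paired with $d_{k+r}$ rather than with $d_k$, so that the non-monotonicity of $f$ below $1$ works in our favor. The efficient route is to perturb the ``mass matrix'' $I$ by the rank-$\le r$ semidefinite term $N$ and use that $I-N$ agrees with $I$ on $\ker N$; writing instead $(I-N)^{-1/2}D(I-N)^{-1/2}=D+(\text{a symmetric matrix of rank}\le 2r)$ would double the shift and yield only a weaker bound.
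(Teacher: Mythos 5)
Your argument for part (i) is correct, but it reaches the diagonal reduction and then solves it by a genuinely different route than the paper. The paper first passes to precision matrices via Woodbury (Lemma~\ref{lemma_low_rank}), whitens with an SVD of $\Gprst H \Gprs$ to reduce to the pencil $(ZZ^\top+I,\,D+I)$, and then solves that reduced problem \emph{variationally} (Lemma~\ref{lemma:diagcase}): it invokes a theorem of Lewis on differentiability of spectral functions, shows the objective is coercive, derives the critical-point equation $D(\Uh\Uh^\top)=(\Uh\Uh^\top)^2$, enumerates all critical points as sums $\sum d_{k_i}^2 e_{k_i}e_{k_i}^\top$ over index subsets, and picks the best one by monotonicity of $f$. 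You instead stay with covariances, whiten the pencil $(\Gpos,\Gposh)$ to $(D,\,I-N)$ with $N\succeq 0$ of rank $\le r$, and certify global optimality directly through eigenvalue inequalities: $\sigma_k\ge d_k$ from $I-N\preceq I$, and $\sigma_k\le d_{k+r}\le 1$ for $k\le n-r$ from Courant--Fischer restricted to $\ker N$ plus Cauchy interlacing, after which the two-sided monotonicity of $f\in\Uc$ around $x=1$ gives the lower bound $f(1)r+\sum_{i>r}f(1/(1+\delta_i^2))$, attained by the diagonal $N^\star$. What your approach buys is a fully elementary, derivative-free proof of the sharp lower bound (no spectral-function calculus, no coercivity argument, no classification of critical points); what the paper's approach buys is that the enumeration of critical points makes the uniqueness claim in (ii) fall out almost immediately, and the same Lemma~\ref{lemma:diagcase} is reused verbatim for the precision-matrix statement of Corollary~\ref{cor:Minv}. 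Your identification of the shift-$r$ (not $2r$) interlacing bound as the crux is accurate; that is exactly the step that makes the pairing $f(\sigma_k)\ge f(d_{k+r})$ work.

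The one genuine shortfall is part (ii): you only sketch it. Knowing that the spectrum of the pencil $(D,\,I-N)$ equals $\{1\ (\times r),\,d_{r+1},\dots,d_n\}$ does not by itself pin down $N$; one must trace the equality conditions through the interlacing step to show that $\ker N$ must contain the span of $e_{r+1},\dots,e_n$ and that the eigenvalue-$1$ equation $Nv=(I-D)v$ on the complementary $r$-dimensional eigenspace then forces $N=N^\star$ when $d_1<\dots<d_r$. This is doable within your framework but is not yet done; as written, only existence of the minimizer and the value of the minimum are established.
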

\medskip

Theorem \ref{Thm:main_th} provides a way to compute the best
approximation of $\Gpos$ by matrices in ${\Mc}_r$:
\highlight
{
 it is just a matter
of computing the eigenpairs corresponding to the decreasing sequence
of eigenvalues of the pencil  $(H,\Gpr^{-1})$ until a stopping criterion is
satisfied.  This criterion can be based on the minimum loss
\eqref{eq:error_estimate}. Notice that the minimum loss is a function of the generalized eigenvalues $(\delta_i^2)_{i\ge r}$ 
 that have not been computed. This is quite common in numerical linear algebra 
 (e.g., error in the truncated SVD \cite{eckart1936approximation, golub2012matrix}). However, since the eigenvalues $(\delta_i^2)$ are
 computed in a decreasing order, the minimum loss can be easily bounded.
 }

The generalized eigenvectors, $\widehat{w}_i$, are orthogonal with respect
to the inner product induced by the prior precision matrix, and they
maximize the Rayleigh ratio,
\begin{equation*}
  \widehat{\Rc}(z)  =  \frac{z^\top H z }{ z^\top \Gpr^{-1} z},                  
\end{equation*}
over subspaces of the form $\widehat{\mathcal{W}}_i={\rm span}^\perp
(\widehat{w}_j)_{j<i}$.  Intuitively, the vectors $\widehat{w}_i$
associated with generalized eigenvalues greater than one correspond to
directions in the parameter space (or subspaces thereof) where the
curvature of the log-posterior density is constrained more by the
log-likelihood than by the prior.

\subsection{Computing eigenpairs of $(H,\Gpr^{-1})$}
\highlight{If a square root factorization of the prior covariance matrix $\Gpr=\Gprs\Gprs^\top$ is available, then
  the Hermitian generalized eigenvalue problem can be reduced to a standard one: find the eigenpairs, $(\delta_i^2,w_i)$,
  of $\Gprs^{\top}H\Gprs$, and transform the resulting eigenvectors according to $w_i \mapsto \Gprs w_i$
  \cite[Section~5.2]{bai2000templates}.  An analogous transformation is also possible when a square root factorization
  of $\Gpr^{-1}$ is available.  Notice that only the actions of $\Gprs$ and $\Gprs^\top$ on a vector are required.  For
  instance, evaluating the action of $\Gprs$ might involve the solution of an elliptic PDE \cite{lindgren2011explicit}.
  There are numerous examples of priors for which a decomposition $\Gpr=\Gprs \Gprs^\top$ is readily available, e.g.,
  \cite{wood1994simulation,dietrich1997fast,lindgren2011explicit,yue2010nonstationary,stuart2010inverse}.  Either direct
  methods or, more often, matrix-free algorithms (e.g., Lanczos iteration or its block version 
  \cite{lanczos1950iteration,paige1972computational,cullum1974block,golub1977block}) 
  can be used to solve the standard Hermitian
  eigenvalue problem \cite[Section~4]{bai2000templates}. Reference implementations of these algorithms are available in
  ARPACK \cite{lehoucq1998arpack}.
  \highlightTwo{We note that the Lanzos iteration comes with a rich literature 
  on error analysis (e.g., \cite{kaniel1966estimates,paige1971computation,paige1976error, kahan1978far,saad1980rates,golub2012matrix}).
  Alternatively, one can use randomized methods \cite{halko2011finding}, which offer the
  advantage of parallelism (asynchronous computations) and robustness over standard Lanczos methods \cite{bui2012extreme}.}
  If a square root factorization of $\Gpr$ is not available, but it is possible to solve linear systems with
  $\Gpr^{-1}$, we can use a Lanczos method for generalized Hermitian eigenvalue problems
  \cite[Section~5.5]{bai2000templates} where a Krylov basis orthogonal with respect to the inner product induced by
  $\Gpr^{-1}$ is maintained. Again, ARPACK provides an efficient
  implementation of these solvers.  When accurately solving
  linear systems with $\Gpr^{-1}$ is a difficult task, we refer the reader to alternative algorithms proposed in
  \cite{sameh1982trace} and \cite{golub2002inverse}.}

\medskip
\begin{remark}
\highlight{
{\rm If a factorization $\Gpr=\Gprs \Gprs^\top$ is available, then it is straightforward to obtain an expression for a non-symmetric square root of the optimal approximation of $\Gpos$ \eqref{minimizer_theorem} as in \cite{bui2013computational}:
 \begin{equation}
  \Gposhs =\Gprs \left( \sum_{i=1}^r  \left[ \left(1+\delta_i^2\right)^{-1/2}-1 \right] w_i w_i^\top  + I \right)
 \end{equation}
 such that $\Gposh=\Gposhs \,\Gposhs^\top$ and $w_i=\Gprs^{-1}\widehat{w}_i$. 
This expression can be used  to efficiently sample 
from the approximate posterior distribution $\Gauss(\mupos ,\Gposh)$ (e.g., \cite{flath2011fast,martin2012stochastic}).   
\highlightTwo{Alternative techniques for sampling from high-dimensional Gaussian distributions can be found, for instance, in \cite{parker2012sampling,fox2014convergence}.}}
}
\end{remark}

\section{Properties of the optimal covariance approximation} \label{sec:interpretation}

Now we discuss several implications of the optimal approximation of $\Gpos$ introduced in the previous section. 
We start by describing the relationship between this approximation and the directions of greatest relative reduction of prior variance. Then we interpret the covariance approximation as the result of projecting the likelihood function onto a ``data-informed'' subspace. Finally, we contrast the present approach with several other approximation strategies: using the Frobenius norm as a loss function for the covariance matrix approximation, or developing low-rank approximations based on prior or Hessian information alone. We conclude by drawing the connections with the BFGS Kalman filter update.

\subsection{Interpretation of the eigendirections}

Thanks to the particular structure of loss functions in $\Lc$, the problem of approximating $\Gpos$ is equivalent to that of approximating $\Gpos^{-1}$.  Yet the form of the optimal approximation of $\Gpos^{-1}$ is important, as it explicitly describes the directions that control the ratio of posterior to prior variance. 
The following corollary to Theorem \ref{Thm:main_th} characterizes these directions. The proof is in Appendix~\ref{sec:proofs}.

\medskip
\begin{corollary}[Optimal posterior precision approximation] \label{cor:Minv}
\highlight{
Let $(\delta_i^2,\widehat{w}_i)$ and $L\in\Lc$ be defined as in Theorem \ref{Thm:main_th}. Then: 
}
     \begin{itemize}
\item[(i)] A minimizer of $L(B,\Gpos^{-1})$ for
    \begin{equation} \label{eq:class_Minv}
  B\in  {\Mc}_r^{-1} := \left\{ \Gpr^{-1}+JJ^\top  :\mathrm{rank}(J) \le r  \right\}
     \end{equation}
is given by
\highlight{
    \begin{equation}\label{eq:minthminv}
\Gposh^{-1} = \Gpr^{-1} + UU^\top,\quad UU^\top = \sum_{i=1}^r \delta_i^2 \widetilde{w}_i\widetilde{w}_i^\top,\quad \widetilde{w}_i=\Gpr^{-1} \widehat{w}_i .
    \end{equation}
The minimizer \eqref{eq:minthminv} is unique if the first $r$ 
eigenvalues of $(H,\Gpr^{-1})$ are different.
}
\item[(ii)]The optimal posterior precision matrix \eqref{eq:minthminv} is precisely the 
inverse of the optimal posterior covariance matrix \eqref{minimizer_theorem}.
 \item[(iii)] The vectors $\widetilde{w}_{i}$ are generalized eigenvectors of the pencil $(\Gpos,\Gpr)$:
\begin{equation}\label{eq:wtilde}
\Gpos\, \widetilde{w}_{i} = \frac{1}{1+\delta_i^2}\,\Gpr\, \widetilde{w}_{i}.
\end{equation} 
    \end{itemize}       
\end{corollary}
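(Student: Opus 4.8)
I would dispatch parts (ii) and (iii) first, as short computations, and then attack (i). For (ii), apply the Sherman--Morrison--Woodbury identity to $\Gposh=\Gpr-\sum_{i=1}^{r}\delta_i^2(1+\delta_i^2)^{-1}\widehat w_i\widehat w_i^\top$ from \eqref{minimizer_theorem}. Using that the generalized eigenvectors may be taken orthonormal in the $\Gpr^{-1}$ inner product, i.e.\ $\widehat w_i^\top\Gpr^{-1}\widehat w_j=\delta_{ij}$ (as in the discussion following Theorem \ref{Thm:main_th}), the capacitance matrix appearing in Woodbury's formula collapses to $\operatorname{diag}(1/\delta_i^2)$, giving $\Gposh^{-1}=\Gpr^{-1}+\sum_{i=1}^{r}\delta_i^2(\Gpr^{-1}\widehat w_i)(\Gpr^{-1}\widehat w_i)^\top=\Gpr^{-1}+UU^\top$, which is exactly \eqref{eq:minthminv}; this also records that $B\mapsto B^{-1}$ is a bijection from $\Mc_r^{-1}$ onto $\Mc_r$ (the reverse direction is Woodbury again, using $\Gpr-KK^\top\succ0$). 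For (iii), start from the pencil relation $H\widehat w_i=\delta_i^2\Gpr^{-1}\widehat w_i$ of \eqref{eq:precondH}: adding $\Gpr^{-1}\widehat w_i$ yields $\Gpos^{-1}\widehat w_i=(1+\delta_i^2)\Gpr^{-1}\widehat w_i=(1+\delta_i^2)\widetilde w_i$, and since $\Gpr\widetilde w_i=\widehat w_i$ by the definition $\widetilde w_i=\Gpr^{-1}\widehat w_i$, rearranging gives $\Gpos\widetilde w_i=(1+\delta_i^2)^{-1}\Gpr\widetilde w_i$, i.e.\ \eqref{eq:wtilde}.

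For (i), the conceptual observation is that the generalized eigenvalues of $(B,\Gpos^{-1})$ are the reciprocals of those of $(B^{-1},\Gpos)$, so that under the bijection of (ii), minimizing $L(\cdot,\Gpos^{-1})$ over $\Mc_r^{-1}$ is equivalent to minimizing over $\Mc_r$ a loss of the form \eqref{eq:form_loss_fun} built from $\widetilde f(x)=f(1/x)$; a one-line chain-rule check from \eqref{eq:Uclass} shows $\widetilde f$ is again strictly decreasing on $(0,1)$ with unique global minimum at $x=1$, which is essentially what the proof of Theorem \ref{Thm:main_th} uses on the (bounded) range of eigenvalues that occurs. To keep the corollary self-contained, though, I would re-run that argument directly on precisions. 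Fix a prior square root $\Gpr=\Gprs\Gprst$; by congruence invariance of generalized eigenvalues, $L(B,\Gpos^{-1})$ equals $L$ evaluated on the pencil $(\Gprst B\Gprs,\,\Gprst\Gpos^{-1}\Gprs)=(I+\widetilde J\widetilde J^\top,\,I+\widetilde H)$, where $\widetilde J=\Gprst J$ ranges over all rank-$\le r$ matrices and $\widetilde H=\Gprst H\Gprs=\sum_i\delta_i^2 w_i w_i^\top$ with $w_i=\Gprs^{-1}\widehat w_i$ orthonormal. Since the generalized eigenvalues of a pencil $(A,B)$ equal the eigenvalues of $B^{-1/2}AB^{-1/2}$, in the $w_i$-basis these become the ordinary eigenvalues of $M:=D_0+N$, with $D_0=\operatorname{diag}\bigl((1+\delta_i^2)^{-1}\bigr)$ and $N$ an arbitrary symmetric positive semidefinite matrix of rank $\le r$; so (i) reduces to minimizing $\sum_i f(\lambda_i(D_0+N))$ over such $N$.

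The crux is then a sharp spectral estimate for $D_0+N$: since $N\succeq0$ and $\operatorname{rank}N\le r$, Weyl's inequalities give $\lambda_k(D_0)\le\lambda_k(M)$ for every $k$ and $\lambda_k(M)\le\lambda_{k-r}(D_0)$ for $k>r$ (eigenvalues in decreasing order). Because every eigenvalue of $D_0$ lies in $(0,1]$ and every $f\in\Uc$ is strictly decreasing on $(0,1)$ with global minimum $f(1)$, the $n-r$ ``trapped'' eigenvalues contribute at least $\sum_{i>r}f\bigl((1+\delta_i^2)^{-1}\bigr)$ to $\sum_i f(\lambda_i(M))$ and the remaining $r$ contribute at least $r\,f(1)$ --- exactly the minimum loss \eqref{eq:error_estimate}. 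This bound is attained by the $N$ that is diagonal in the $w_i$-basis with $i$-th weight $\delta_i^2(1+\delta_i^2)^{-1}$ for $i\le r$ and $0$ otherwise, which undoes, via the same transformations, to $JJ^\top=\sum_{i=1}^{r}\delta_i^2\widetilde w_i\widetilde w_i^\top=UU^\top$, recovering \eqref{eq:minthminv} --- and, by (ii), $\Gposh^{-1}$. Uniqueness when $\delta_1^2,\dots,\delta_r^2$ are distinct follows because the Weyl inequalities must then hold with equality in a way that pins down both the spectrum of $M$ and its leading eigenspaces.

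The step I expect to be the main obstacle is precisely this spectral estimate and its sharpness: getting the Weyl/interlacing bounds with the correct indexing and orientation, checking that the lower bound is genuinely \emph{attained} by a feasible rank-$\le r$ perturbation rather than merely approached, and handling uniqueness carefully when $\delta_1^2,\dots,\delta_r^2$ are simple but ties may occur lower in the spectrum. The Woodbury step for (ii), the one-line manipulation for (iii), and the congruence/diagonalization reductions are routine.
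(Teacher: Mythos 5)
Your plan is correct in substance, and for the key part (i) it runs on a genuinely different engine than the paper. Parts (ii) and (iii) are fine: the paper gets (i)--(ii) essentially for free because its proof of Theorem \ref{Thm:main_th} already performs the optimization on the precision side (via the bijection of Lemma \ref{lemma_low_rank}, producing $UU^\top=\sum_{i\le r}\delta_i^2\widetilde{w}_i\widetilde{w}_i^\top$ first and only then converting back to $\Gposh$ by Woodbury), and its proof of (iii) routes through the square root $\Gprs$; your direct manipulation of $H\widehat{w}_i=\delta_i^2\Gpr^{-1}\widehat{w}_i$ is if anything cleaner. On the reduction for (i), note that the paper never needs the reciprocal loss $x\mapsto f(1/x)$: the generalized eigenvalues of $(B,\Gpos^{-1})$ coincide exactly with those of $(\Gpos,B^{-1})$ (inverting both matrices \emph{and} swapping the pencil order preserves the spectrum), so $L(B,\Gpos^{-1})=L(\Gpos,B^{-1})$ and the corollary is literally the theorem restated --- this sidesteps your (correct) concern that $f(1/x)$ may violate the coercivity condition in \eqref{eq:Uclass}. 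Where you truly diverge is the core minimization: the paper's Lemma \ref{lemma:diagcase} differentiates the spectral functional (Lewis's theorem), proves coercivity, and classifies all critical points, whereas you derive the lower bound $rf(1)+\sum_{i>r}f(1/(1+\delta_i^2))$ from Weyl's inequalities $\lambda_k(D_0)\le\lambda_k(D_0+N)\le\lambda_{k-r}(D_0)$ plus monotonicity of $f$ on $(0,1]$, and then exhibit a feasible $N$ attaining it. I checked your indexing and it works: the trapped eigenvalues contribute at least $\sum_{j\le n-r}f(\lambda_j(D_0))=\sum_{i>r}f((1+\delta_i^2)^{-1})$, and the diagonal $N$ with weights $\delta_i^2(1+\delta_i^2)^{-1}$ attains the bound and pulls back to \eqref{eq:minthminv}. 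This buys elementarity (only monotonicity of $f$ is used, not differentiability, and existence of a minimizer is automatic since the bound is attained), at the cost of the uniqueness claim: equality in your bounds pins down the spectrum of $D_0+N$ (since $f$ is strictly monotone away from $1$), but concluding that $N$ itself must be the stated matrix requires an equality-case analysis of Weyl's inequality that you only gesture at; the paper's critical-point classification delivers uniqueness directly by showing every stationary $N$ is a coordinate sub-selection of $D$. Either complete that equality-case argument or import Lemma \ref{lemma:diagcase}(ii) for the uniqueness statement.
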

Note that the definition of the class ${\Mc}_r^{-1}$ is analogous to that of ${\Mc}_r$.  Indeed, Lemma \ref{lemma_low_rank} in Appendix~\ref{sec:proofs} defines a bijection between these two classes.

\medskip

The vectors $\widetilde{w}_i = \Gpr^{-1} \widehat{w}_i$ are orthogonal with respect to the inner product defined by $\Gpr$. 
 By \eqref{eq:wtilde}, we also know that $\widetilde{w}_i$ minimizes the generalized Rayleigh quotient,
\begin{equation} \label{eq:Rayleigh_quotient_tilde}
  {\Rc}(z)=\frac{z^\top \Gpos z }{z^\top \Gpr \,z }=\frac{\Var(z^\top x\mid y)}{\Var(z^\top x)}, 
\end{equation}
over subspaces of the form $\widetilde{\mathcal{W}}_i={\rm span}^\perp (\widetilde{w}_j)_{j<i}$. This Rayleigh quotient is precisely the \textit{ratio of posterior to prior variance} along a particular direction, $z$, in the parameter space.  The smallest values that $\Rc$ can take over the subspaces $\widetilde{\mathcal{W}}_i$ are exactly the smallest generalized eigenvalues of $(\Gpos,\Gpr)$. In particular, the data are most informative along the first $r$ eigenvectors $\widetilde{w}_{i}$ and, since 
\begin{equation}\label{eq:optratio}
{\Rc}(\widetilde{w}_i)=
\frac{\Var(\widetilde{w}_i^\top x\mid y)}{\Var(\widetilde{w}_i^\top x)} = \frac{1}{1+\delta_i^2},
\end{equation}
the posterior variance is smaller than the prior variance by a factor of $(1+\delta_i^2)^{-1}$. In the span of the other eigenvectors, $(\widetilde{w}_{i}) _{i> r}$, the data are not as informative. Hence, $(\widetilde{w}_i)$ are the directions along which the ratio of posterior to prior variance is minimized. 
Furthermore, a simple computation shows that these directions also maximize the relative difference 
between prior and posterior variance normalized by the prior variance. Indeed, if the directions 
$(\widetilde{w}_i)$ minimize \eqref{eq:Rayleigh_quotient_tilde} then they must also maximize $1- {\Rc}(z)$, leading to:
\begin{equation}\label{eq:optratio_relvar}
1-{\Rc}(\widetilde{w}_i)=
\frac{\Var(\widetilde{w}_i^\top x)  -  \Var(\widetilde{w}_i^\top x\mid y)}{\Var(\widetilde{w}_i^\top x)} = \frac{\delta_i^2}{1+\delta_i^2}.
\end{equation}

\subsection{Optimal projector} 
Since the data are most informative on a subspace of the parameter
space, it should be possible to reduce the \textit{effective dimension} of the
inference problem in a manner that is consistent with the posterior
approximation. This is essentially the content of the following
corollary, which follows by a simple computation.
\medskip
\begin{corollary}[Optimal projector] \label{Cor:optimal_projector}
\highlight{
Let $\Gposh$ and the vectors $(\widehat{w}_i,\widetilde{w}_i)$ be defined as in Theorems \ref{Thm:main_th} and \ref{cor:Minv}.  Consider the reduced forward operator 
$\widehat{G}_r = G\circ P_r$, where $P_r$ is the oblique projector (i.e., $P_r^2 = P_r$ but $P_t^\top\neq P_r$):
 \begin{equation}\label{eq:projector}
P_r = \sum_{i=1}^r \widehat{w}_i \widetilde{w}_i^\top.
\end{equation}
}
 Then $\Gposh$ is  precisely the posterior covariance matrix corresponding to the Bayesian linear model: 
\begin{equation} \label{eq:reduced_Gauss}
y\mid x \sim \Gauss(\widehat{G}_r\, x,\,\Gobs),\quad x\sim \Gauss(0,\,\Gpr).
\end{equation}
\end{corollary}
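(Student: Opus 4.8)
The plan is to compute directly the posterior covariance matrix of the reduced Bayesian linear model \eqref{eq:reduced_Gauss} and match it against formula \eqref{minimizer_theorem} (equivalently \eqref{eq:minthminv}). By \eqref{eq:postmoms} applied to \eqref{eq:reduced_Gauss}, this covariance equals $(\widehat{H}_r + \Gpr^{-1})^{-1}$, where $\widehat{H}_r = \widehat{G}_r^\top\Gobs^{-1}\widehat{G}_r = P_r^\top G^\top\Gobs^{-1}G\,P_r = P_r^\top H P_r$ is the Hessian of the reduced negative log-likelihood. Hence it suffices to show that $P_r^\top H P_r = \sum_{i=1}^r \delta_i^2\,\widetilde{w}_i\widetilde{w}_i^\top$, since then Corollary \ref{cor:Minv}(ii) immediately identifies $(\widehat{H}_r + \Gpr^{-1})^{-1}$ with $\Gposh$.

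The key step is to expand $P_r^\top H P_r$ using $P_r = \sum_{i=1}^r \widehat{w}_i\widetilde{w}_i^\top$ and to exploit two properties of the generalized eigenpairs of the pencil $(H,\Gpr^{-1})$: first, $H\widehat{w}_i = \delta_i^2\,\Gpr^{-1}\widehat{w}_i = \delta_i^2\,\widetilde{w}_i$; second, the eigenvectors may be taken $\Gpr^{-1}$-orthonormal, i.e.\ $\widehat{w}_j^\top\Gpr^{-1}\widehat{w}_i = \delta_{ij}$ (this is precisely the normalization under which \eqref{minimizer_theorem} holds). These give $\widehat{w}_j^\top H\widehat{w}_i = \delta_i^2\,\widehat{w}_j^\top\Gpr^{-1}\widehat{w}_i = \delta_i^2\delta_{ij}$, so the double sum $P_r^\top H P_r = \sum_{i,j\le r}\widetilde{w}_j\,(\widehat{w}_j^\top H\widehat{w}_i)\,\widetilde{w}_i^\top$ collapses to $\sum_{i=1}^r \delta_i^2\,\widetilde{w}_i\widetilde{w}_i^\top$, as required. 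The same bi-orthogonality also verifies the claims about $P_r$ itself: since $\widetilde{w}_i^\top\widehat{w}_i = \widehat{w}_i^\top\Gpr^{-1}\widehat{w}_i = 1$ one has $P_r^2 = P_r$, while $P_r^\top = \sum_i\widetilde{w}_i\widehat{w}_i^\top$ differs from $P_r$ in general, so $P_r$ is an oblique projector onto $\mathrm{span}(\widehat{w}_i)_{i\le r}$.

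There is essentially no obstacle here beyond bookkeeping: the corollary follows directly from Theorem \ref{Thm:main_th} and Corollary \ref{cor:Minv} together with the bi-orthogonality of the families $(\widehat{w}_i)$ and $(\widetilde{w}_i)$. The only point requiring a little care is to state the normalization convention for the $\widehat{w}_i$ explicitly, since it is this convention that simultaneously makes $P_r$ idempotent and makes the reduced Hessian telescope correctly. One could alternatively argue entirely at the level of precision matrices---observing that the reduced model's posterior precision is $\Gpr^{-1} + P_r^\top H P_r$, a non-negative update of $\Gpr^{-1}$ of rank at most $r$, and then invoking the uniqueness part of Corollary \ref{cor:Minv}---but the direct verification above is shorter and self-contained.
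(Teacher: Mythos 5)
Your proof is correct and is precisely the ``simple computation'' the paper alludes to but does not write out: reduce the projected model's posterior precision to $\Gpr^{-1}+P_r^\top H P_r$, use the eigenrelation $H\widehat{w}_i=\delta_i^2\widetilde{w}_i$ together with the $\Gpr^{-1}$-orthonormality of the $\widehat{w}_i$ (the normalization implicit in \eqref{minimizer_theorem}) to collapse $P_r^\top H P_r$ to $\sum_{i\le r}\delta_i^2\widetilde{w}_i\widetilde{w}_i^\top$, and invoke Corollary \ref{cor:Minv}(ii). Your explicit attention to the normalization convention and the verification of idempotency are welcome additions rather than deviations.
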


The projected Gaussian linear model \eqref{eq:reduced_Gauss} reveals the intrinsic dimensionality of the inference problem.
The introduction of the optimal projector \eqref{eq:projector} is also useful in the context of dimensionality reduction for nonlinear inverse problems. In this case a particularly simple and effective approximation of the posterior density, $\pi_{\rm pos}(x \vert y)$, is of the form $\widehat{\pi}_{\rm pos}(x \vert y) \propto \pi(y; P_r \,x) \,\pi_{\rm pr}(x)$,
where $\pi_{\rm pr}$ is the prior density and $\pi(y;P_r\,x)$ is the density corresponding to the likelihood function with parameters constrained by the projector. 
The range of the projector can be determined by combining locally optimal data-informed subspaces from high-density regions in the support of the posterior distribution. This approximation is the subject of a related paper \cite{cui2014likelihood}. 

Returning to the linear inverse problem, notice also that the posterior mean of the projected model \eqref{eq:reduced_Gauss} might be used as an efficient approximation of the exact posterior mean. We will show in Section \ref{sec:mean} that this posterior mean approximation in fact minimizes the Bayes risk for a weighted squared-error loss among all low-rank linear functions of the data.

\subsection{Comparison with optimality in Frobenius norm} 

Thus far our optimality results for the approximation of $\Gpos$ have been restricted to the class of loss functions $\Lc$ given in Definition~\ref{def:lossfunc}. However, it is also interesting to investigate optimality in the metric defined by the Frobenius norm. Given any two matrices $A$ and $B$ of the same size, the Frobenius distance between them is defined as $\left\|A-B\right\|$, where $\left\|\,\cdot\right\|$ is the Frobenius norm. Note that the Frobenius distance does not exploit the structure of the positive definite cone of symmetric matrices. The matrix $\Gposh\in{\Mc}_r$ that minimizes the Frobenius distance from the exact posterior covariance matrix is given by: 
  \begin{equation} \label{minimizer_Frobenius}
     \Gposh=\Gpr-KK^\top, \quad KK^\top=\sum_{i=1}^r \lambda_i\, u_i u_i^\top, 
  \end{equation}
where $(u_i)$ are the directions corresponding to the $r$ largest eigenvalues of $\Gpr-\Gpos$. This result can be very different from the optimal approximation given in Theorem \ref{Thm:main_th}. In particular, the directions  $(u_i)$
are solutions of the eigenvalue problem
\begin{equation}\label{eq:vardiffeig}
\Gpr\, G^\top\Gy^{-1} G \,\Gpr\, u = \lambda u,
\end{equation} 
which maximize
\begin{equation} \label{eq:diff_var}
u^\top(\Gpr-\Gpos) u = \Var(u^\top x)-\Var(u^\top x\mid y).
\end{equation}
That is, while optimality in the F\"{o}rstner metric identifies directions that maximize the \textit{relative} difference between prior and posterior variance, the Frobenius distance favors directions that maximize only the absolute value of this difference.  There are many reasons to prefer the former. 
For instance, data might be informative along directions of low prior variance (perhaps due to inadequacies in prior modeling); a covariance matrix approximation that is optimal in Frobenius distance may ignore updates in these directions entirely. Also, if parameters of interest (i.e., components of $x$) have differing units of measurement, relative variance reduction provides a unit-independent way of judging the quality of a posterior approximation; this notion follows naturally from the second invariance property of $\df$ in \eqref{eq:dfprop}. 
From a computational perspective, solving the eigenvalue problem \eqref{eq:vardiffeig} is quite expensive compared to finding
\highlight{
 the generalized eigenpairs of the pencil $(H,\Gpr^{-1})$.} Finally, optimality in the Frobenius distance for an approximation of $\Gpos$ does not yield an optimality statement for the corresponding approximation of the posterior distribution, as shown in Lemma \ref{lemma:equivalence} for loss functions in $\Lc$.

\subsection{Suboptimal posterior covariance approximations} 
\label{s:suboptcov}

\subsubsection{Hessian-based and prior-based reduction schemes}
The posterior approximation described by Theorem~\ref{Thm:main_th} uses both Hessian and prior information. It is instructive to 
consider approximations of the linear Bayesian inverse problem that rely only on one or the other. As we will illustrate numerically 
in Section~\ref{sec:syntheticexamples}, these approximations can be viewed as natural limiting cases of our approach. They are also 
closely related to previous efforts in dimensionality reduction that propose only Hessian-based~\cite{lieberman2013hessian} or 
prior-based~\cite{marzouk2009dimensionality} reductions. In contrast with these previous efforts, here we will consider versions of 
Hessian- and prior-based reductions that do not discard prior information in the remaining directions. In other words, we will 
discuss posterior covariance approximations that remain in the form of \eqref{eq:class_M}---i.e., updating the prior covariance 
only in $r$ directions.

A Hessian-based reduction scheme updates $\Gpr$ in directions where the data have greatest influence in an absolute sense (i.e., not relative to the prior). This involves approximating the negative log-likelihood Hessian \eqref{eq:hessian} with a low-rank decomposition as follows: let $(s_i^2,v_i)$ be the eigenvalue-eigenvector pairs of $H$ with the ordering $s_i^2 \ge s_{i+1}^2$. Then a best low-rank approximation of $H$ in the Frobenius norm is given by:
\[
H\approx \sum_{i=1}^r s_i^2\, v_i v_i^\top =V_r S_r V_r^\top,
\]
where $v_i$ is the $i$th column of $V_r$ and $S_r=\mathrm{diag}\{ s_1^2 ,\ldots, s_r^2\}$. Using Woodbury's identity we then obtain an approximation of $\Gpos$ as a low-rank negative semidefinite update of $\Gpr$:
\begin{equation} \label{eq:Hessian_redu}
    \Gpos    \approx \left( V_r S_r V_r^\top + \Gpr^{-1} \right)^{-1} = \Gpr - \Gpr V_r \left( S_r^{-1} + V_r^\top \Gpr V_r \right)^{-1} V_r^\top \Gpr.
\end{equation}
This approximation of the posterior covariance matrix belongs to the class ${\Mc}_r$. Thus, Hessian-based reductions are in general 
suboptimal when compared to the optimal approximations defined in Theorem \ref{Thm:main_th}. Note that an equivalent way to obtain 
\eqref{eq:Hessian_redu} is to use a reduced forward operator of the form $\widehat{G}=G\circ V_r V_r^\top$, which is the composition 
of the original forward operator with a projector onto the leading eigenspace of $H$. In general, the projector $P_r=V_r V_r^\top$ 
is different from the optimal projector defined in Corollary \ref{Cor:optimal_projector} 
and is thus suboptimal. 

To achieve prior-based reductions, on the other hand, we restrict the Bayesian inference problem to directions in the parameter space that explain most of the prior variance. More precisely, we look for a rank-$r$ orthogonal projector, $P_r$, that minimizes the mean squared-error defined as:
\begin{equation} \label{eq:Prior_redu}
  \mathcal{E}\left( P_r \right) =  \Ex \left( \| x - P_r x \|^2 \right),
\end{equation}
where the expectation is taken over the prior distribution (assumed to have zero mean) and $\|\cdot\|$ is the standard Euclidean 
norm \cite{hua1998generalized}. Let $(t_i^2,u_i)$ be the eigenvalue-eigenvector pairs of $\Gpr$ ordered as $t_i^2 \ge t_{i+1}^2$. 
Then a minimizer of \eqref{eq:Prior_redu} is given by a projector, $P_r$,  onto the leading eigenspace of $\Gpr$ defined as:\,
\(
 P_r=\sum_{i=1}^r u_i u_i^\top = U_r U_r^\top,
\)
where $u_i$ is the $i$th column of $U_r$. The actual approximation of the linear inverse problem consists of using the projected forward operator, $\widehat{G}=G\circ U_r U_r^\top$.  By direct comparison with the optimal projector defined in Corollary \ref{Cor:optimal_projector}, 
we see that prior-based reductions are suboptimal in general. Also in this case, the posterior covariance matrix with the projected Gaussian model can be written as a negative semidefinite update of $\Gpr$:
\[
 \Gpos \approx \Gpr- U_r T_r [\,  (\, U_r^\top H U_r \,)^{-1} + T_r\,]^{-1} T_r U_r^\top,
\]
where $T_r=\mathrm{diag}\{ t_1^2 ,\ldots, t_r^2 \}$. The double matrix inversion makes this low-rank update computationally challenging to implement. It is also not optimal, as shown in Theorem \ref{Thm:main_th}.

To summarize, the Hessian and prior-based dimensionality reduction techniques are both suboptimal.  These methods do not take into account 
the interactions between the dominant directions of $H$ and those of $\Gpr$, nor the relative importance of these quantities. 
Accounting for such interaction is a 
key feature of the optimal covariance approximation described in Theorem \ref{Thm:main_th}. Section~\ref{sec:syntheticexamples} will 
illustrate conditions under which these interactions become essential.

\subsubsection{Connections with the BFGS Kalman filter}
\highlight{The linear Bayesian inverse problem analyzed in this paper can be interpreted as the analysis step of a linear Bayesian 
filtering problem \cite{evensen2007data}.  If the prior distribution corresponds to the forecast distribution at some time $t$, the 
posterior coincides with the so-called analysis distribution. In the linear case, with Gaussian process noise and observational errors, 
both of these distributions are Gaussian. The Kalman filter is a Bayesian solution to this filtering problem \cite{kalman1960new}. 
In \cite{auvinen2009large} the authors propose a computationally feasible way to implement (and approximate) this solution in large-scale 
systems. The key observation is that when solving an SPD linear system of the form $Ax=b$ by means of BFGS or limited memory BFGS 
(L-BFGS \cite{liu1989limited}), one typically obtains an approximation of $A^{-1}$ for free.  This approximation can be written as a 
low-rank correction of an arbitrary positive definite initial approximation matrix $A_{0}^{-1}$. The matrix $A_{0}^{-1}$ can be, for 
instance, the scaled identity. Notice that the approximation of $A^{-1}$ given by L-BFGS is full rank and positive definite. This 
approximation is in principle convergent as the storage limit of L-BFGS increases \cite{nocedal1980updating}. An L-BFGS approximation of 
$A$ is also possible \cite{wright1999numerical}.}

\highlight{There are many ways to exploit this property of the L-BFGS method. For example, in \cite{auvinen2009large} the posterior 
covariance is written as a low-rank update of the prior covariance matrix: $\Gpos=\Gpr-\Gpr G^{\top}\Gy^{-1}G\Gpr$, where 
$\Gy=\Gobs+G\Gpr G^{\top}$, and $\Gy^{-1}$ itself is approximated using the L-BFGS method. Since this approximation of $\Gy$ 
is full rank, however, this approach does not exploit potential low-dimensional structure of the inverse problem.
Alternatively, one can obtain an L-BFGS approximation of $\Gpos$ when solving the linear system $\Gpos^{-1} \, x=G^{\top}\Gobs^{-1}y$ for the posterior mean $\mupos$ \cite{auvinen2010variational}.  If one uses the prior covariance matrix as an initial approximation matrix, $A_{0}^{-1}$, then the resulting L-BFGS approximation of $\Gpos$ can be written as a low-rank update of $\Gpr$.  This approximation format is similar to the one discussed in \cite{flath2011fast} and advocated in this paper.  However, the approach of \cite{auvinen2010variational} (or its ensemble version \cite{solonen2012variational}) does not correspond to any known optimal approximation of the posterior covariance matrix, nor does it lead to any optimality statement between the corresponding probability distributions. This is an important contrast with the present approach, which we will revisit numerically in Section~\ref{sec:syntheticexamples}. }

\section{Optimal approximation of the posterior mean} \label{sec:mean}

In this section, we develop and characterize fast approximations of the posterior mean that can be used, for instance, to accelerate repeated inversion with multiple data sets.
\highlight{Note that we are not proposing alternatives to the efficient computation of the posterior mean for a
  single realization of the data. This task can already be accomplished with current state-of-the-art iterative solvers
  for regularized least-squares problems
  \cite{hestenes1952methods,paige1982algorithm,akccelik2006parallel,meng2014lsrn,barrett1994templates}. Instead, we are
  interested in constructing \emph{statistically optimal approximations}\footnote{
	\highlightTwo{  
  We will precisely define this notion of optimality in Section~\ref{sec:meanoptresults}.}}
  of the posterior mean as linear functions of
  the data. 
  \highlightTwo{
  That is, we seek a matrix $A$, from an approximation class to be defined shortly, such that the posterior
  mean can be approximated as $\mupos \approx A \,y$.
  We will investigate different approximation classes for $A$;
  in particular, we will only consider approximation classes for which applying $A$ to a vector $y$ is relatively inexpensive.
  Computing such a matrix $A$ is more expensive than solving a single linear system associated with the posterior
  precision matrix to determine the posterior mean.
  However, once $A$ is computed, it can be applied inexpensively to
  any realization of the data.\footnote{\highlightTwo{In particular, applying $A$ is much cheaper than solving a linear system.}}}  
  Our approach is therefore justified when the posterior mean must be evaluated for multiple instances of the
  data. This approach can thus be viewed as an offline--online strategy, where a more costly but data-independent
  offline calculation is followed by fast online evaluations.
  \highlightTwo{
  Moreover, we will show that these approximations can be obtained from an optimal approximation of the
  posterior covariance matrix (cf.\ Theorem \ref{Thm:main_th}) with minimal additional cost.
  Hence, if one is interested in \textit{both} the  posterior mean and
  covariance matrix (as is often the case in the Bayesian approach
  to inverse problems), then the approximation formulas we propose can
  be more efficient than standard approaches %
  even for a single realization of the data.
  }
  }

\subsection{Optimality results}
\label{sec:meanoptresults}
For the Bayesian linear model defined in \eqref{eq:GBLM}, the posterior mode is equal to the posterior mean, $\mupos=\Ex(x\vert y)$, 
which is in turn the minimizer of the Bayes risk for squared-error loss \cite{lehmann1998theory,lindley1972bayes}. We first review this 
fact and establish some basic notation. Let $S$ be an SPD matrix and let
\[
L(\delta(y),x) = \left ( x-\delta(y) \right)^\top  \! S  \left (x-\delta(y) \right )=\left \| x-\delta(y) \right \|_S^2
\]
be the loss incurred by the estimator $\delta(y)$ of $x$. The Bayes risk, $R \left (\delta(y),x \right )$, of $\delta(y)$ is defined as the average loss over the joint distribution of $x$ and $y$ \cite{carlin2009bayesian,lehmann1998theory}:
$R(\delta(y),x)=\Ex \left (\, L(\delta(y),x) \, \right )$. Since 
\begin{equation}\label{eq:riskdec}
R(\delta(y),x) = \Ex \left  ( \|\,\delta(y) - \mupos\,\|_S^2 \right ) + \Ex \left ( \|\mupos - x\|_S^2 \right ),
\end{equation}
it follows that $\delta(y)=\mupos$ minimizes the Bayes risk over all estimators of $x$.

To study approximations of $\mupos$, we use the squared-error loss function defined by the Mahalanobis distance \cite{christensen1987plane} induced by $\Gpos^{-1}$:
\(
L(\delta(y),x)= \left  \| \delta(y)-x \right \|_{\Gpos^{-1}}^2 .
\)
This loss function accounts for the geometry induced by the posterior measure on the parameter space, penalizing errors in the approximation of $\mupos$ more strongly in directions of lower posterior variance.

Under the assumption of zero prior mean, $\mupos$ is a linear function of the data. Hence we seek approximations of $\mupos$ of the form $Ay$, where $A$ is a matrix in a class to be defined. Our goal is to obtain fast posterior mean approximations that can be applied repeatedly to multiple realizations of $y$. We consider two classes of approximation matrices: 
\begin{equation}\label{eq:Aclass}
\Ac_r := \left\{ A\,:\,{\rm rank}(A)\le r \right\} \quad \text{and}
\quad \widehat{\Ac}_r := \left\{ A=( \Gpr-B )\,G^\top\Gobs^{-1}\,:\,
{\rm rank}(B)\le r \right\}.
\end{equation}
The class $\Ac_r$ consists of low-rank matrices; it is standard in the statistics literature \cite{hua1998generalized}. The class $\widehat{\Ac}_r$, on the other hand, can be understood via comparison with \eqref{eq:postmoms}; it simply replaces $\Gpos$ with a low-rank negative semidefinite update of $\Gpr$. We shall henceforth use $\Ac$ to denote either of the two classes above. 

Let $R_{\Ac}(Ay,x)$ be the Bayes risk of $Ay$ subject to $A\in \Ac$. We may now restate our goal as: find a matrix, $A^\ast \in \Ac$, 
that minimizes the Bayes risk $R_{\Ac}(Ay,x)$. That is, find $A^\ast \in \Ac$ such that  
\begin{equation} \label{eq:form_Bayes_risk}
R_{\Ac}(A^\ast y,x) = \min_{A\in\Ac}\Ex(\,\left\|Ay - x\right\|^2_{\Gpos^{-1}}\,).
\end{equation}
The following two theorems show that for either class of approximation matrices, $\Ac_r$ or $\widehat{\Ac}_r$, this problem admits a particularly simple analytical solution that exploits the structure of the optimal approximation of $\Gpos$. The proofs of the theorems rely 
\highlightTwo{
on a result developed independently by Sondermann \cite{sondermann1986best} and 
Friedland  \& Torokhti \cite{friedland2007generalized}, and are given in Appendix~\ref{sec:proofs}.
}
We also use the fact that $\Ex \left ( \left \| \mupos - x \right \|_{\Gpos^{-1}}^2 \right )=\ell$, where $\ell$ is the dimension of the parameter space.
\medskip

\begin{theorem} \label{thm:mean_approx_lowrank}
\highlight{
Let $(\delta_i^2,\widehat{w}_i)$ be defined as in Theorem \ref{Thm:main_th} and let $(\widehat{v}_i)$ be generalized eigenvectors of the pencil $(G\Gpr G^\top , \Gobs)$ associated with a non-increasing sequence of eigenvalues, 
 \highlightTwo{ with the normalization $\widehat{v}_i^\top \, \Gobs \, \widehat{v}_i  = 1$}. Then:  }
 \begin{itemize}
\item[(i)] A solution of \eqref{eq:form_Bayes_risk} for $A\in \Ac_r$ is given by:
 \begin{equation} \label{eq: mean_approx_lowrank}
     A^\ast =  \sum_{i= 1}^r  \frac{\delta_i}{1+\delta_i^2}\, \widehat{w}_i \widehat{v}_i^\top,
 \end{equation}
\item[(ii)] The corresponding minimum Bayes risk over $\Ac_r$ is given by:
 \begin{equation}\label{eq:RAr}
 R_{\Ac_r}(A^\ast y,x)= \Ex \left ( \left \|A^\ast y - \mupos \right \|^2_{\Gpos^{-1}} \right ) 
+ \Ex \left ( \left \|\mupos - x \right \|_{\Gpos^{-1}}^2 \right ) = \sum_{i>r} \delta_i^2 + \ell.
\end{equation}
\end{itemize}
\end{theorem}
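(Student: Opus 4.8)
The plan is to turn the Bayes-risk minimization over $\Ac_r$ into a weighted rank-constrained matrix approximation problem, and then to diagonalize the relevant matrix explicitly using the generalized eigenpairs $(\delta_i^2,\widehat w_i)$ already at hand from Theorem~\ref{Thm:main_th}.

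First I would decompose the risk exactly as in \eqref{eq:riskdec}: writing $Ay-x=(Ay-\mupos)+(\mupos-x)$ and conditioning on $y$ (so that $Ay-\mupos$ is deterministic and $\Ex[\,\mupos-x\mid y\,]=0$), the cross term vanishes and
\[
R_{\Ac_r}(Ay,x)=\Ex\!\left(\|Ay-\mupos\|_{\Gpos^{-1}}^2\right)+\Ex\!\left(\|\mupos-x\|_{\Gpos^{-1}}^2\right),
\]
where the second summand equals $\ell$. Hence minimizing the Bayes risk over $\Ac_r$ is equivalent to minimizing $\Ex(\|Ay-\mupos\|_{\Gpos^{-1}}^2)$ over matrices of rank at most $r$. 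Next I would make the matrix structure explicit: by \eqref{eq:postmoms} we have $\mupos=My$ with $M=\Gpos G^\top\Gobs^{-1}$, and $y\sim\Gauss(0,\Gy)$ with $\Gy=\Gobs+G\Gpr G^\top$ the marginal covariance. Therefore
\[
\Ex\!\left(\|Ay-\mupos\|_{\Gpos^{-1}}^2\right)=\trace\!\left[(A-M)\,\Gy\,(A-M)^\top\Gpos^{-1}\right]=\bigl\|\Gpos^{-1/2}(A-M)\Gy^{1/2}\bigr\|_F^2 ,
\]
with $\|\cdot\|_F$ the Frobenius norm. Since $\Gpos^{-1/2}$ and $\Gy^{1/2}$ are invertible, $\mathrm{rank}(A)\le r$ iff $\mathrm{rank}(\Gpos^{-1/2}A\Gy^{1/2})\le r$, so this is a rank-constrained Frobenius approximation of $N:=\Gpos^{-1/2}M\Gy^{1/2}=\Gpos^{1/2}G^\top\Gobs^{-1}\Gy^{1/2}$. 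Applying the generalized low-rank approximation result of Sondermann \cite{sondermann1986best} and Friedland \& Torokhti \cite{friedland2007generalized} (equivalently, the truncated-SVD theorem of Eckart--Young--Mirsky in these transformed coordinates), a minimizer is $A^\ast=\Gpos^{1/2}N_r\Gy^{-1/2}$, with $N_r$ a rank-$r$ truncated SVD of $N$, and the attained value of $\Ex(\|Ay-\mupos\|_{\Gpos^{-1}}^2)$ is $\sum_{i>r}\sigma_i(N)^2$.

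The remaining and most delicate step is to compute the SVD of $N$ in closed form. I would record the normalization $\widehat w_i^\top\Gpr^{-1}\widehat w_j=\delta_{ij}$ of the pencil $(H,\Gpr^{-1})$, the eigen-relation $H\widehat w_i=\delta_i^2\Gpr^{-1}\widehat w_i$, and (from $\Gpos^{-1}=H+\Gpr^{-1}$) the identity $\Gpos^{-1}\widehat w_i=(1+\delta_i^2)\Gpr^{-1}\widehat w_i$. Using $\Gobs^{-1}\Gy\Gobs^{-1}=\Gobs^{-1}+\Gobs^{-1}G\Gpr G^\top\Gobs^{-1}$ one gets $NN^\top=\Gpos^{1/2}(H+H\Gpr H)\Gpos^{1/2}$, and a short calculation with the identities above gives $NN^\top(\Gpos^{-1/2}\widehat w_i)=\delta_i^2\,(\Gpos^{-1/2}\widehat w_i)$. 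Thus the singular values of $N$ are the $\delta_i$ (already ordered non-increasingly), the $\Gpr^{-1}$-orthonormality of the $\widehat w_i$ makes $\widehat a_i:=\Gpos^{-1/2}\widehat w_i/\sqrt{1+\delta_i^2}$ an orthonormal family of left singular vectors, and the matching unit right singular vectors are $\widehat b_i=N^\top\widehat a_i/\delta_i=\Gy^{1/2}\widehat v_i/\sqrt{1+\delta_i^2}$, where $\widehat v_i:=\delta_i^{-1}\Gobs^{-1}G\widehat w_i$. One then checks directly that $G\Gpr G^\top\widehat v_i=\delta_i^2\Gobs\widehat v_i$ and $\widehat v_i^\top\Gobs\widehat v_i=1$, so the $\widehat v_i$ are exactly the normalized generalized eigenvectors of $(G\Gpr G^\top,\Gobs)$ in the statement (the two pencils sharing their nonzero eigenvalues, since $\Gobs^{-1/2}G\Gpr G^\top\Gobs^{-1/2}$ and $\Gpr^{1/2}H\Gpr^{1/2}$ have the same nonzero spectrum). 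Substituting $N_r=\sum_{i\le r}\delta_i\,\widehat a_i\widehat b_i^\top$ into $A^\ast=\Gpos^{1/2}N_r\Gy^{-1/2}$ cancels the factors $\Gpos^{\pm1/2}$, $\Gy^{\pm1/2}$ and collapses the coefficient to $\delta_i/(1+\delta_i^2)$, yielding \eqref{eq: mean_approx_lowrank}; and $\sum_{i>r}\sigma_i(N)^2+\ell=\sum_{i>r}\delta_i^2+\ell$ gives \eqref{eq:RAr}.

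The hard part is precisely this last paragraph: performing the diagonalization of $N$ while tracking all normalization constants, and correctly passing between the pencils $(H,\Gpr^{-1})$ and $(G\Gpr G^\top,\Gobs)$. Repeated generalized eigenvalues (where the eigenvectors are fixed only up to a rotation within an eigenspace) and vanishing $\delta_i$ cause no difficulty here, since the statement only asserts the existence of a minimizer: such directions can be chosen to be $\Gpr^{-1}$-orthonormal, and those with $\delta_i=0$ drop out of the rank-$r$ truncation; a uniqueness statement, as in Theorem~\ref{Thm:main_th}(ii), would additionally require $\delta_r^2>\delta_{r+1}^2$.
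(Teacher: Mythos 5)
Your proposal is correct, and its overall architecture coincides with the paper's: the bias--variance decomposition \eqref{eq:riskdec}, the rewriting of the residual risk as a weighted Frobenius norm $\|\Gpos^{-1/2}(A-M)\Gy^{1/2}\|_F^2$, and the appeal to the Sondermann/Friedland--Torokhti theorem to reduce everything to a rank-$r$ truncated SVD of a single matrix. Where you genuinely diverge is in the final diagonalization step. The paper routes this computation through an auxiliary result (Lemma~\ref{lemma:square_roots}), which builds explicit, generally non-symmetric square roots of $\Gpos$ and $\Gy$ out of the SVD $WDV^\top$ of $\Gprs^\top G^\top\Gobss^{-\top}$, and then reads off the truncated SVD of $\Gposs^\top G^\top\Gobs^{-1}\Gys$ by matrix multiplication. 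You instead keep symmetric square roots, form $NN^\top=\Gpos^{1/2}(H+H\Gpr H)\Gpos^{1/2}$, and verify directly from the eigenrelations $H\widehat w_i=\delta_i^2\Gpr^{-1}\widehat w_i$ and $\Gpos^{-1}\widehat w_i=(1+\delta_i^2)\Gpr^{-1}\widehat w_i$ that $\Gpos^{-1/2}\widehat w_i$ are eigenvectors of $NN^\top$ with eigenvalues $\delta_i^2$; I checked these identities and the normalizations ($\widehat w_i^\top\Gpos^{-1}\widehat w_i=1+\delta_i^2$, $\widehat v_i^\top\Gy\widehat v_i=1+\delta_i^2$) and they are all right. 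Your route is somewhat more self-contained --- it never needs the square-root factorizations of Lemma~\ref{lemma:square_roots}, and it makes the appearance of the second pencil $(G\Gpr G^\top,\Gobs)$ transparent by exhibiting $\widehat v_i=\delta_i^{-1}\Gobs^{-1}G\widehat w_i$ explicitly --- while the paper's factorization-based route has the advantage of being directly reusable in the proof of Theorem~\ref{thm:mean_approx_fullrank}, where the same square roots and the same SVD of $\Gprs^\top G^\top\Gobss^{-\top}$ are needed again. Your closing remarks on repeated eigenvalues and on the harmless degeneracy at $\delta_i=0$ (where $\widehat v_i$ is not defined by your formula but the corresponding term in \eqref{eq: mean_approx_lowrank} vanishes anyway) correctly delimit the existence claim actually being proved.
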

Notice that the rank-$r$ posterior mean approximation given by Theorem \ref{thm:mean_approx_lowrank} coincides with the posterior mean of the projected linear Gaussian model defined in \eqref{eq:reduced_Gauss}.  Thus, applying this approximation to a new realization of the data requires only a \textit{low-rank} matrix-vector product, a computationally trivial task.
\highlightTwo{ We define the quality of a posterior mean approximation as the minimum Bayes risk \eqref{eq:RAr}.
Notice, however, that for a given rank $r$ of the approximation, \eqref{eq:RAr} depends on the eigenvalues that have not yet been computed. Since $(\delta_i^2)$ are determined in order of decreasing magnitude, \eqref{eq:RAr} can be easily bounded (cf.\ discussion after Theorem \ref{Thm:main_th}). The forthcoming minimum Bayes risk \eqref{eq:RArh} can be bounded analogously.}
\vspace{5pt}
\begin{remark}{\rm
\highlight{
Equation \eqref{eq: mean_approx_lowrank} can be interpreted as the truncated GSVD solution of a Tikhonov regularized
linear inverse problem \cite{hansen1989regularization} (with unit regularization parameter). Hence, Theorem \ref{thm:mean_approx_lowrank} also describes a Bayesian property of the (frequentist) truncated GSVD estimator.}
}
\end{remark}
\vspace{5pt}
\begin{remark}{\rm 
\highlight{
If factorizations of the form $\Gpr = \Gprs \Gprs^\top$ and $\Gobs = \Gobss \Gobss^\top$ are readily available, then we can characterize the triplets $(\delta_i, \widehat{w}_i, \widehat{v}_i)$  from a singular value decomposition, 
$\Gobss^{-1} G \Gprs = \sum_{i\ge 1} \delta_i v_i w_i^\top$,
of the matrix $\Gobss^{-1} G \Gprs$ with the transformations 
$\widehat{w}_i = \Gprs w_i$, $\widehat{v}_i = \Gobss^{-\top} v_i$ and the ordering $\delta_i \ge \delta_{i+1}$.  
In particular, the approximate posterior mean can be written as:
\begin{equation} \label{eq: mean_lowrank_pseudoinverse}
	\muposr = \Gprs ( \Gobss^{-1} G \Gprs )_r^{\rm Tikh } \Gobss^{-1} y
\end{equation}   
where $ ( \Gobss^{-1} G \Gprs )_r^{\rm Tikh}$ is the best rank-$r$ approximation to a Tikhonov regularized inverse.\footnote{With unit regularization parameter and identity regularization operator \cite{hansen1998rank}.} 
That is, for any matrix $A$, $(A)_r$ is the best rank-$r$ approximation of $A$ (e.g., computed via SVD), whereas $(A)^{\rm Tikh}:=(A^\top A + I )^{-1}A^\top$. }
}
\end{remark}
\medskip

\begin{theorem}  \label{thm:mean_approx_fullrank}
Let $\Gposh\in {\Mc}_r$ be the optimal approximation of $\Gpos$ defined in 
Theorem \ref{Thm:main_th}. Then:
\begin{itemize}
\item[(i)] A  solution of \eqref{eq:form_Bayes_risk} for $A\in \widehat{\Ac}_r$ is given by:
 \begin{equation} \label{eq: mean_approx_fullrank}
     \widehat{A}^\ast =\Gposh\, G^\top \Gobs^{-1} .
 \end{equation}
\item[(ii)] The corresponding minimum Bayes risk over $\widehat{\Ac}_r$ is given by:
\begin{equation}\label{eq:RArh}
  R_{\widehat{\Ac}_r}(\widehat{A}^\ast y,x) = 
\Ex \left ( \left \|\widehat{A}^\ast y - \mupos \right \|^2_{\Gpos^{-1}} \right ) + \Ex \left ( \left \|\mupos - x \right \|_{\Gpos^{-1}}^2 \right )
= \sum_{i>r} \delta_i^6 + \ell.
\end{equation}
\end{itemize}
\end{theorem}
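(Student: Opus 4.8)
\emph{Proof plan.} The strategy is to turn the Bayes-risk minimization over $\widehat{\Ac}_r$ into a weighted low-rank approximation of the fixed matrix $\Gpr-\Gpos$, and to solve that problem by simultaneously diagonalizing in the generalized eigenbasis of $(H,\Gpr^{-1})$ and then invoking the Sondermann / Friedland--Torokhti theorem.

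First I would use the risk decomposition \eqref{eq:riskdec} with $\delta(y)=Ay$ and $S=\Gpos^{-1}$, together with $\Ex(\|\mupos-x\|_{\Gpos^{-1}}^2)=\ell$, so that for every $A\in\widehat{\Ac}_r$,
\[
R_{\widehat{\Ac}_r}(Ay,x)=\Ex\!\big(\|Ay-\mupos\|_{\Gpos^{-1}}^2\big)+\ell .
\]
Writing $A=(\Gpr-B)G^\top\Gobs^{-1}$ with $\mathrm{rank}(B)\le r$ and using $\mupos=\Gpos\,G^\top\Gobs^{-1}y$ gives $Ay-\mupos=E\,G^\top\Gobs^{-1}y$ with $E:=(\Gpr-\Gpos)-B$. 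Taking the expectation over the marginal $y\sim\Gauss(0,\Gy)$, $\Gy=\Gobs+G\Gpr G^\top$, and simplifying $G^\top\Gobs^{-1}\Gy\,\Gobs^{-1}G=H+H\Gpr H$, yields
\[
\Ex\!\big(\|Ay-\mupos\|_{\Gpos^{-1}}^2\big)=\trace\!\big[E^\top\Gpos^{-1}E\,(H+H\Gpr H)\big] .
\]
Hence the task is to choose $B$ of rank at most $r$ so as to approximate $\Gpr-\Gpos$ in the (possibly degenerate) weighted Frobenius norm with left weight $N=\Gpos^{-1}$ and right weight $M=H+H\Gpr H$.

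Next I would diagonalize the target and both weights at once. Writing $\Gpr=\Gprs\Gprs^\top$ and $\widehat{w}_i=\Gprs w_i$, where the $w_i$ are orthonormal eigenvectors of $\Gprs^\top H\Gprs$ with eigenvalues $\delta_i^2$ ordered non-increasingly, the rank-preserving substitution $B=\Gprs C\Gprs^\top$ turns the objective into $\trace\!\big[(D_0-C)^\top D_1(D_0-C)D_2\big]$, where $D_0$, $D_1$, $D_2$ represent $\Gpr-\Gpos$, $\Gpos^{-1}$ and $H+H\Gpr H$ in the $w_i$ coordinates, i.e.\ $D_0=\mathrm{diag}\!\big(\delta_i^2/(1+\delta_i^2)\big)$, $D_1=\mathrm{diag}(1+\delta_i^2)$, $D_2=\mathrm{diag}\!\big(\delta_i^2(1+\delta_i^2)\big)$. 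Since $D_1^{1/2}D_0D_2^{1/2}=\mathrm{diag}(\delta_i^3)$, the singular values of $N^{1/2}(\Gpr-\Gpos)M^{1/2}$ are exactly the $\delta_i^3$, already in non-increasing order.

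Finally I would invoke the generalized low-rank approximation theorem of Sondermann \cite{sondermann1986best} and of Friedland \& Torokhti \cite{friedland2007generalized}: the minimum of the weighted objective equals $\sum_{i>r}(\delta_i^3)^2=\sum_{i>r}\delta_i^6$, attained by truncating the SVD above to its $r$ leading terms, which after undoing the substitution is $B^\ast=\sum_{i=1}^r\frac{\delta_i^2}{1+\delta_i^2}\widehat{w}_i\widehat{w}_i^\top$. This is precisely the optimal rank-$r$ update of Theorem \ref{Thm:main_th}, so $\Gpr-B^\ast=\Gposh$ and $\widehat{A}^\ast=\Gposh\,G^\top\Gobs^{-1}$; adding back $\ell$ gives \eqref{eq:RArh}. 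The main work is the algebra of the third step---checking that $\Gpos^{-1}$, $\Gpr-\Gpos$ and $H+H\Gpr H$ share the eigenvectors $(\widehat{w}_i)$ and that the three diagonal factors collapse to the single scalar $\delta_i^3$. The only other subtlety is that $M=H+H\Gpr H$ is singular when $G$ is not of full column rank, but this is exactly the degenerate case covered by the Friedland--Torokhti statement; equivalently, the directions with $\delta_i=0$ contribute nothing to $\Gpr-\Gpos$ or to the risk and may be left untouched by $B$.
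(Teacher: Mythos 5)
Your proposal is correct, and it reaches the same destination as the paper's proof by the same underlying mechanism---reduce the Bayes risk to a rank-constrained weighted Frobenius approximation and diagonalize everything simultaneously in the basis coming from the SVD of $\Gprs^\top H \Gprs$, so that the singular values $\delta_i^3$ and hence the residual $\sum_{i>r}\delta_i^6$ appear---but the execution differs in a way worth noting. The paper keeps the data-dependence explicit: it writes $\Ex(\|Ay-\mupos\|^2_{\Gpos^{-1}})=\|\Gposs^{-1}(A-\Gpos G^\top\Gobs^{-1})\Gys\|_F^2$, which forces it to construct an explicit square root $\Gys$ of the marginal covariance (Lemma \ref{lemma:square_roots}), to invoke the Friedland--Torokhti formula with the pseudo-inverse $(\Gprs^\top G^\top\Gobs^{-1}\Gys)^\dagger$, and to select a particular non-minimal-norm minimizer \eqref{eq:estim_II} rather than \eqref{eq:estim_II_0} so that the resulting $B$ has the required form. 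You instead integrate out $y$ immediately, obtaining the weight $G^\top\Gobs^{-1}\Gy\Gobs^{-1}G=H+H\Gpr H$, which turns the problem into a symmetric two-sided weighted approximation of the fixed matrix $\Gpr-\Gpos$; after the substitution $B=\Gprs W C W^\top\Gprs^\top$ all three matrices become diagonal and the answer drops out of Eckart--Young (the Friedland--Torokhti citation is then barely needed, since the lower bound is the rank-non-increasing map $C\mapsto D_1^{1/2}CD_2^{1/2}$ plus ordinary SVD truncation, and the upper bound is attained by an explicit diagonal $C$). This avoids both $\Gys$ and the pseudo-inverse bookkeeping, and it automatically produces a symmetric $B$ without having to argue about which Friedland--Torokhti minimizer to take; the price is the small amount of algebra verifying $D_0=\mathrm{diag}(\delta_i^2/(1+\delta_i^2))$, $D_1=\mathrm{diag}(1+\delta_i^2)$, $D_2=\mathrm{diag}(\delta_i^2(1+\delta_i^2))$, all of which checks out. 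Your handling of the degenerate case $\delta_i=0$ (singular right weight) is also correct.
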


Once the optimal approximation of $\Gpos$ described in Theorem~\ref{thm:mean_approx_fullrank} is computed, the cost of 
approximating $\mupos$ for a new realization of $y$ is dominated by the adjoint and prior solves needed to apply $G^\top$ and $\Gpr$, 
respectively. Combining the optimal approximations of $\mupos$ and $\Gpos$ given by Theorems \ref{thm:mean_approx_fullrank} and 
\ref{Thm:main_th}, respectively, yields a complete approximation of the Gaussian posterior distribution. This is precisely the 
approximation adopted by the stochastic Newton MCMC method \cite{martin2012stochastic} to describe the Gaussian proposal distribution 
obtained from a local linearization of the forward operator of a nonlinear Bayesian inverse problem. Our results support the 
algorithmic choice of \cite{martin2012stochastic} with precise optimality statements.

It is worth noting that the two optimal Bayes risks, \eqref{eq:RAr} and \eqref{eq:RArh}, depend on the parameter, $r$, that defines the dimension of the corresponding approximation classes $\Ac_r$ and $\widehat{\Ac}_r$. In the former case, $r$ is the rank of the optimal matrix that defines the approximation. In the latter case, $r$ is the rank of a negative update of $\Gpr$ that yields the posterior covariance matrix approximation. We shall thus refer to the estimator given by Theorem \ref{thm:mean_approx_lowrank} as the low-rank approximation and to the estimator given by Theorem \ref{thm:mean_approx_fullrank} as the low-rank \textit{update} approximation. In both cases, we shall refer to $r$ as the order of the approximation.  A posterior mean approximation of order $r$ will be called \textit{under-resolved} if more than $r$ 
\highlight{
generalized eigenvalues of the pencil $(H,\Gpr^{-1})$  are greater than one. If this is the case, then using the low-rank update approximation is not appropriate because the associated Bayes risk includes high-order powers of eigenvalues of $(H,\Gpr^{-1})$  that are greater than one. Thus, under-resolved approximations tend to be more accurate when using the low-rank approximation. As we will show in Section~\ref{s:examples}, this estimator is also less expensive to compute than its counterpart in Theorem \ref{thm:mean_approx_fullrank}. If, on the other hand, fewer than $r$ eigenvalues of $(H,\Gpr^{-1})$  are greater than one, then the optimal low-rank \textit{update} estimator will have better performance than the optimal low-rank estimator in the following sense:
}
 \[
 0<R_{\Ac_r}(A^\ast y,x) -   R_{\widehat{\Ac}_r}(\widehat{A}^\ast y,x)   = \sum_{i>r} \delta_i^2 \left( 1+\delta_i^2 \right) \left( 1-\delta_i^2 \right).
 \]

\subsection{Connection with ``priorconditioners''}
\highlight{
In this subsection we draw connections between the low-rank approximation of the posterior mean given in Theorem
\ref{thm:mean_approx_lowrank} and the regularized solution of a discrete ill-posed inverse problem, $y=Gx+\vare$ (using
the notation of this paper), as presented in \cite{calvetti2005priorconditioners,calvetti2007preconditioned}. In
\cite{calvetti2005priorconditioners,calvetti2007preconditioned}, the authors propose an early stopping regularization
using iterative solvers preconditioned by prior statistical information on the parameter of interest, say
$x\sim\mathcal{N}(0,\Gpr)$, and on the noise, say $\vare \sim \Nc(0,\Gobs)$.\footnote{It suffices to consider a
  Gaussian approximation to the distribution of $x$ and $\vare$} That is, if factorizations $\Gpr = \Gprs \Gprs^\top$
and $\Gobs = \Gobss \Gobss^\top$ are available, then \cite{calvetti2005priorconditioners} provides a solution, $x=\Gprs
\, q$, to the inverse problem, where $q$ comes from an early stopping regularization applied to the preconditioned linear
system: 
\begin{equation}
 \Gobss^{-1} G \Gprs q = \Gobss^{-1} y. 
\end{equation}
The iterative method of choice in this case is the CGLS algorithm \cite{calvetti2005priorconditioners,
  hanke1995conjugate} (or GMRES for nonsymmetric square systems \cite{calvetti2002regularizing}) equipped with a proper stopping criterion (e.g., the discrepancy principle \cite{kaipio2005statistical}).  Although the approach of
\cite{calvetti2005priorconditioners} is not exactly Bayesian, 
we can still use the optimality results of Theorem \ref{thm:mean_approx_lowrank} to justify the observed good performance of this
particular form of regularization.  By a property of the CGLS algorithm, the $r$th iterate, $x^r=\Gprs q^r$, satisfies:
\begin{equation} \label{eq:CGLS_property}
q^r =  \operatorname*{argmin}_{q\in \mathcal{K}_r \left( \widehat{H} , \widehat{y}  \right) } \Vert \, \Gobss^{-1}y -  \Gobss^{-1} G
\Gprs  q  \, \Vert .
\end{equation}
where $\mathcal{K}_r ( \widehat{H} , \widehat{y} ) $ is the $r$--dimensional Krylov subspace associated with the matrix $\widehat{H}=\Gprs^\top H \Gprs$ and starting vector $\widehat{y}= \Gprs^\top G^\top \Gobs^{-1}y$.  It was shown in \cite{hestenes1975pseudoinversus} that the CGLS solution, at convergence, can be written as $x^*=\Gprs(\Gobss^{-1}G\Gprs)^{\dagger}\Gobss^{-1}y$, where $(\,\cdot \,)^{\dagger}$ denotes the Moore-Penrose pseudoinverse \cite{Moore1920,penrose1955generalized}.  
To highlight the differences between the CGLS solution and \eqref{eq: mean_lowrank_pseudoinverse}, 
we assume that $ \mathcal{K}_{r}(\widehat{H},y) \approx {\text{ran}}(W_{r})$ for all $r$, 
where $W_{r}= \left [ w_{1} \, |\cdots | \, w_{r} \right ]$, 
\highlightTwo{ ${\text{ran}}(A)$ denotes the range of a matrix $A$}, 
and $\widehat{H}=\sum_{i}\delta_{i}^{2}w_{i}w_{i}^{\top}$ is an SVD of $\widehat{H}$.  
Notice that the condition $\mathcal{K}_{r}(\widehat{H},y) \approx {\rm ran}(W_{r})$ is usually quite reasonable for moderate values of $r$.
This practical observation is at the heart of the Lanczos iteration for symmetric eigenvalue problems \cite{lanczos1950iteration}.
With simple algebraic manipulations we conclude that:
\begin{equation} \label{eq: sol_priorconditioner}
x^{r} \approx \Gprs(\Gobss^{-1}G\Gprs)_{r}^{\dagger}\,\Gobss^{-1}y .
\end{equation}
Recall from \eqref{eq: mean_lowrank_pseudoinverse} that the optimal rank--$r$ approximation of the posterior mean defined in Theorem \ref{thm:mean_approx_lowrank}
can be written as:
\begin{equation} \label{eq: mean_lowrank_pseudoinverse_again}
	\muposr = \Gprs ( \Gobss^{-1} G \Gprs )_r^{ \rm Tikh  } \Gobss^{-1} y .
\end{equation}   
The only difference between \eqref{eq: sol_priorconditioner} and \eqref{eq: mean_lowrank_pseudoinverse_again}
is the use of a Tikhonov-regularized inverse in \eqref{eq: mean_lowrank_pseudoinverse_again} as opposed to 
a Moore-Penrose pseudoinverse. If $\Gobss^{-1} G \Gprs = \sum_{i\ge 1} \delta_i v_i w_i^\top$ is a reduced SVD of the matrix 
$\Gobss^{-1} G \Gprs$, then:
\begin{equation}
  (\Gobss^{-1}G\Gprs)_{r}^{\dagger}=\sum_{i\le r} \frac{1}{\delta_i} w_i v_i^\top, \qquad 
   ( \Gobss^{-1} G \Gprs )_r^{ \rm Tikh  } = \sum_{i\le r} \frac{\delta_i}{1+\delta_i^2} w_i v_i^\top.
\end{equation}
These two matrices are {\it nearly} identical for values of $r$ corresponding to $\delta_r^2$ 
greater than one\footnote{In Section \ref{s:examples} we show that by the time 
 we start including generalized eigenvalues $\delta_i^2 \approx 1$ 
in \eqref{eq: mean_approx_lowrank}, the approximation of the posterior mean  is usually already satisfactory.
Intuitively, this means that all the directions in parameter space where the data are more informative than the prior have
been considered.}  
(assuming the ordering $\delta_i^2 \ge \delta_{i+1}^2$).
Beyond this regime, it might be convenient to stop the CGLS solver to obtain \eqref{eq: sol_priorconditioner} 
(i.e., early stopping regularization).
The similarity of these expressions is quite remarkable since \eqref{eq: mean_lowrank_pseudoinverse_again}  
was derived as the minimizer of the optimization problem \eqref{eq:form_Bayes_risk} with $\Ac = \Ac_r$.
This informal argument may explain why {\it priorconditioners} perform so well in applications 
\cite{calvetti2012left, homa2013bayesian}. Yet we remark that the goals of Theorem \ref{thm:mean_approx_lowrank} and 
\cite{calvetti2005priorconditioners} are still quite different;
\cite{calvetti2005priorconditioners} is concerned with preconditioning techniques for early stopping regularization of 
ill-posed inverse problems, whereas Theorem \ref{thm:mean_approx_lowrank} is concerned with 
statistically optimal approximations of the posterior mean in the Bayesian framework.
}

\begin{algorithm}
\caption{Optimal \textit{low-rank} approximation of the posterior mean}
 INPUT: \highlight{
 forward and adjoint models $G$, $G^\top$; prior and noise precisions  $\Gpr^{-1}$, $\Gobs^{-1}$;  approximation order $r\in\mathbb{N}$
 }\\
 OUTPUT: approximate posterior mean $\mu^{(r)}_{\rm pos}(y)$
\begin{algorithmic}[1]
  \small
  \STATE   \highlight{ Find the $r$ leading generalized eigenvalue-eigenvector pairs $(\delta_i^2,\widehat{w}_i)$ of the pencil $(G^\top \Gobs^{-1} G , \Gpr^{-1})$
  \STATE Find the $r$ leading  generalized eigenvector pairs $(\widehat{v}_i)$ of the pencil $(G\,\Gpr G^\top , \Gobs)$
  \STATE For each new realization of the data $y$, compute 
    $\mu^{(r)}_{\rm pos}(y)= \sum_{i=1}^r \delta_i  (1+\delta_i^2)^{-1} \widehat{w}_i  \widehat{v}_i^\top   \,y$.}
\end{algorithmic}
\end{algorithm}

\begin{algorithm}
\caption{Optimal \textit{low-rank update} approximation of the posterior mean}
 INPUT: forward and adjoint models $G$, $G^\top$; prior and noise precisions  $\Gpr^{-1}$, $\Gobs^{-1}$;  approximation order $r\in\mathbb{N}$ \\
 OUTPUT: approximate posterior mean $\widehat{\mu}^{(r)}_{\rm pos}(y)$
\begin{algorithmic}[1]
  \small
  \STATE Obtain $\Gposh$ as described in Theorem~\ref{Thm:main_th}. %
  \STATE For each new realization of the data $y$, compute 
    $\widehat{\mu}^{(r)}_{\rm pos}(y)= \Gposh \, G^\top \Gobs^{-1}\, y$.
  
\end{algorithmic}
\end{algorithm}

\section{Numerical examples}
\label{s:examples}
Now we provide several numerical examples to illustrate the theory developed in the preceding sections. We start with a synthetic example to demonstrate various posterior covariance matrix approximations, and continue with two more realistic linear inverse problems where we also study posterior mean approximations.

\subsection{Example 1: Hessian and prior with controlled spectra} 
\label{sec:syntheticexamples}
We begin by investigating the approximation of $\Gpos$ as a negative semidefinite update of $\Gpr$. We compare the optimal approximation obtained in Theorem \ref{Thm:main_th} with the Hessian-, prior-, and BFGS-based reduction schemes discussed in Section~\ref{s:suboptcov}. The idea is to reveal differences between these approximations by exploring regimes where the data have differing impacts on the prior information. Since the directions defining the optimal update are the generalized eigenvectors of the pencil $(H,\Gpr^{-1})$, we shall also refer to this update as the \textit{generalized} approximation. 

To compare these approximation schemes, we start with a simple example with diagonal Hessian and prior covariance matrices: $G=I$, $\Gobs = \mathrm{diag}\{\sigma_i^2\}$, and $\Gpr = \mathrm{diag}\{\lambda_i^2\}$. Since the forward operator $G$ is the identity, this problem can (loosely) be thought of as denoising a signal $x$. In this case, $H=\Gobs^{-1}$ and $\Gpos = \mathrm{diag}\{\lambda_i^2\sigma_i^2/(\sigma_i^2+\lambda_i^2)\}$. The ratios of posterior to prior variance in the canonical directions $(e_i)$ are 
\[
\frac{\Var(e_i^\top x\mid y)}{\Var(e_i^\top x)} = \frac{1}{1+\lambda_i^2/\sigma_i^2}.
\]
We note that if the observation variances $\sigma_i^2$ are constant, $\sigma_i=\sigma$, then the directions of greatest
variance reduction are those corresponding to the largest prior variance. Hence the prior distribution alone determines the most informed directions, and the prior-based reduction is as effective as the generalized one. On the other hand, if the prior variances $\lambda_i^2$ are constant, $\lambda_i =\lambda$, but the $\sigma_i$ vary, then the directions of highest variance reduction are those corresponding to the smallest noise variance. This time the noise distribution
alone determines the most important directions, and Hessian-based reduction is as effective as the generalized one.
In the case of more general spectra, the important directions depend on the ratios $\lambda_i^2/\sigma_i^2$ and thus one has to use the information provided by both the noise and prior distributions. This is done naturally by the generalized reduction. 

We now generalize this simple case by moving to full matrices $H$ and $\Gpr$ with a variety of prescribed spectra. We assume that $H$ and $\Gpr$ have SVDs of the form $H =U\Lambda U^\top$ and $\Gpr = V\widetilde{\Lambda} V^\top$, where $\Lambda = \mathrm{diag}\{\lambda_1,\ldots,\lambda_n \}$ and $\widetilde{\Lambda} = \mathrm{diag}\{\widetilde{\lambda}_1,\ldots,\widetilde{\lambda}_n \}$ with
\[
\lambda_k = \lambda_0/k^{\alpha}+\tau\quad \mathrm{and}\quad \widetilde{\lambda}_k = \widetilde{\lambda}_0/k^{\widetilde{\alpha}}+
\widetilde{\tau}.
\]
To consider many different cases, the orthogonal matrices $U$ and $V$ are randomly and independently generated uniformly over the orthogonal group \cite{stewart1980}, leading to different realizations of $H$ and $\Gpr$. In particular,  $U$ and $V$ are computed with a $QR$ decomposition of a square matrix of independent standard Gaussian entries using a Gram-Schmidt orthogonalization. (In this case, the standard Householder reflections cannot be used.)

\highlight{
Before discussing the results of the first experiment, we explain our implementation of BFGS-based reduction.  We ran the BFGS optimizer with a dummy quadratic optimization target $\mathcal{J}(x) = \frac{1}{2} \, x^\top \Gpos^{-1} x$ and used $\Gpr$ as the initial approximation matrix for $\Gpos$. Thus, the BFGS approximation of the posterior covariance matrix can be written as $\Gpos = \Gpr - KK^\top$ for some rank--$r$ matrix $K$.  The rank--$r$ update is constructed by running the BFGS optimizer for $r$ steps from random initial conditions as shown in \cite{auvinen2010variational}.
Note that in order to obtain results for sufficiently high-rank updates, we use BFGS rather than L-BFGS in our numerical examples. While \cite{auvinen2009large, auvinen2010variational} in principle employ L-BFGS, the results in these papers use a number of optimization steps roughly equal to the number of vectors stored in L-BFGS; 
our approach thus is comparable to \cite{auvinen2009large, auvinen2010variational}.
Nonetheless, some results for the highest-rank BFGS updates are not plotted in Figures \ref{fig:randomized_priorfixed} and \ref{fig:randomized_hessfixed}, as the optimizer converged so close to the optimum that taking further steps resulted in numerical instabilities.}

Figure \ref{fig:randomized_priorfixed} summarizes the results of the first experiment. The top row shows the prescribed spectra of $H^{-1}$ (red) and $\Gpr$ (blue). The parameters describing the eigenvalues of $\Gpr$ are fixed to $\widetilde{\lambda}_0=1$, $\widetilde{\alpha}=2$, and $\widetilde{\tau}=10^{-6}$. The corresponding parameters for $H$ are given by $\lambda_0=500$ and $\tau=10^{-6}$ with $\alpha=0.345$ (left), $\alpha=0.690$ (middle), and $\alpha=1.724$ (right). Thus, moving from the leftmost column to the rightmost column, the data become increasingly less informative. The second row in the figure shows the F\"{o}rstner distance between $\Gpos$ and its approximation, $\Gposh=\Gpr-KK^\top$, as a function of the rank of $KK^\top$ for 100 different realizations of $H$ and $\Gpr$. The third row shows, for each realization of $(H,\Gpr)$ and for each fixed rank of $KK^\top$, the difference between the F\"{o}rstner distance obtained with a prior-, Hessian-, or BFGS-based dimensionality reduction technique and the minimum distance obtained with the generalized approximation. All of these differences are positive---a confirmation of Theorem \ref{Thm:main_th}. 
But Figure \ref{fig:randomized_priorfixed} also shows interesting patterns consistent with the observations made for the simple example above. When the spectrum of $H$ is basically flat (left column), the directions along which the prior variance is reduced the most are likely to be those corresponding to the largest prior variances, and thus a prior-based reduction is almost as effective as the generalized one (as seen in the bottom two rows on the left). 
As we move to the third column, eigenvalues of $H^{-1}$ increase more quickly. The data provide significant information only on a lower-dimensional subspace of the parameter space. In this case, it is crucial to combine this information with the directions in the parameter space along which the prior variance is the greatest. The generalized reduction technique successfully accomplishes this task, whereas the prior and Hessian reductions fail as they focus either on $\Gpr$ or $H$ alone; the key is to combine the two.
\highlight{The BFGS update performs remarkably well across all three configurations of the Hessian spectrum, although it is clearly suboptimal compared to the generalized reduction.}

In Figure \ref{fig:randomized_hessfixed} the situation is reversed and the results are symmetric to those of Figure \ref{fig:randomized_priorfixed}.  The spectrum of $H$ (red) is now kept fixed with parameters $\lambda_0=500$, $\alpha=1$, and $\tau=10^{-9}$, while the spectrum of $\Gpr$ (blue) has parameters $\widetilde{\lambda}_0=1$ and $\widetilde{\tau}=10^{-9}$ with decay rates increasing from left to right: $\widetilde{\alpha}=0.552$ (left), $\widetilde{\alpha}=1.103$ (middle), and $\widetilde{\alpha}=2.759$ (right). In the first column,  the spectrum of the prior is nearly flat. That is, the prior variance is almost equally spread along every direction in the parameter space. In this case, the eigenstructure of $H$ determines the  directions of greatest variance reduction, and the Hessian-based reduction is almost as effective as the generalized one. As we move towards the third column, the spectrum of $\Gpr$ decays more quickly. The prior variance is restricted to lower-dimensional subspaces of the parameter space. Mismatch between prior- and Hessian-dominated directions then leads to poor performance of both the prior- and Hessian-based reduction techniques.  However, the generalized reduction performs well also in this more challenging case.
\highlight{The BFGS reduction is again empirically quite effective in most of the configurations that we consider. It is not always better than the prior- or Hessian-based techniques when the update rank is low, or when the prior spectrum decays slowly; for example, Hessian-based reduction is more accurate than BFGS across all ranks in the first column of Figure \ref{fig:randomized_hessfixed}. But when either the prior covariance or the Hessian have quickly decaying spectra, the BFGS approach performs almost as well as the generalized reduction. Though this approach remains suboptimal, its approximation properties deserve further theoretical study.}

\begin{figure}[!htp]
\begin{center}
\includegraphics[width=.8\textwidth]{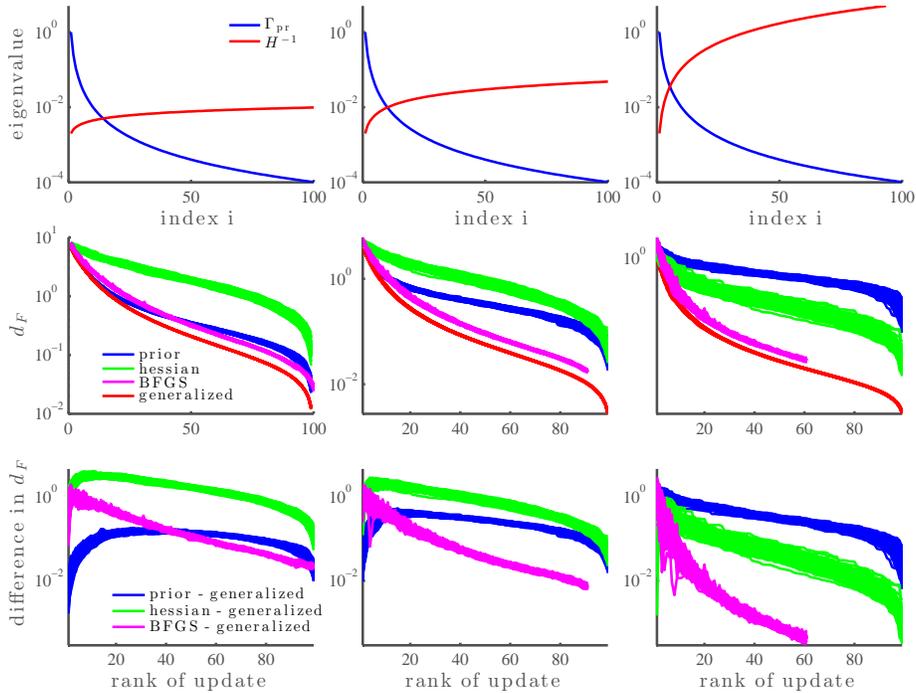}
\caption{Top row: Eigenspectra of $\Gpr$ (blue) and $H^{-1}$ (red) for three values for the decay rate of the 
eigenvalues of $H$: 
$\alpha=0.345$ (left), $\alpha=0.690$ (middle) and $\alpha=1.724$ (right). Second row: F\"{o}rstner distance between $\Gpos$
and its approximation versus the rank of the update for 100 realizations of
$\Gpr$ and $H$ using prior-based (blue), Hessian-based (green), BFGS-based (magenta) and optimal (red) updates. 
Bottom row: Differences of posterior covariance approximation error (measured with the F\"{o}rstner metric) between the prior-based and optimal updates (blue), between the Hessian-based and optimal updates (green), and between the BFGS-based and optimal updates (magenta).}
\label{fig:randomized_priorfixed}
\end{center}
\end{figure}

\begin{figure}[!htp]
\begin{center}
\includegraphics[width=.8\textwidth]{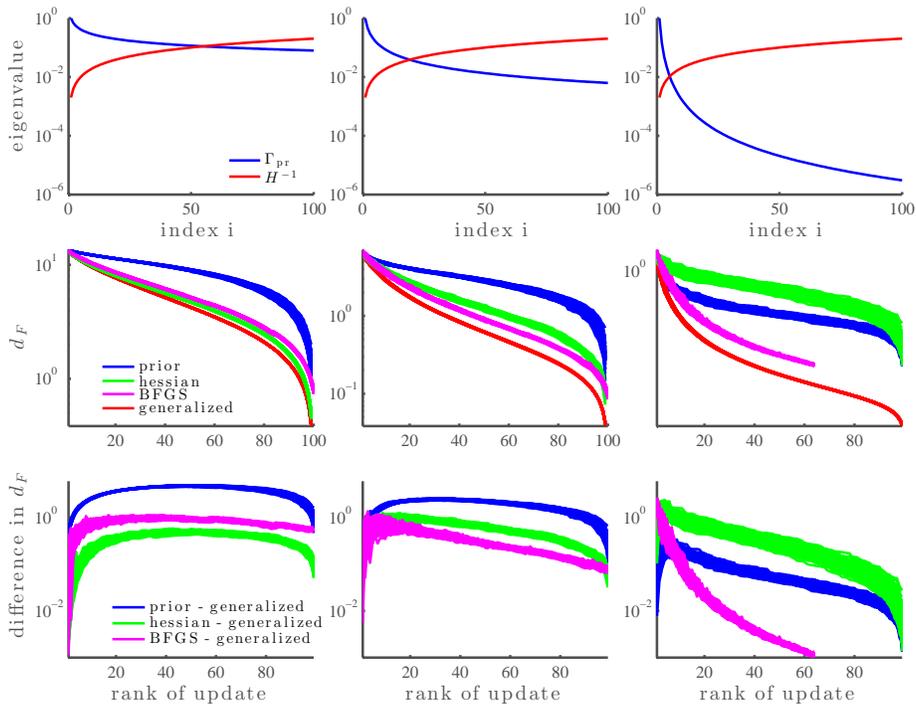}
\caption{Analogous to Figure \ref{fig:randomized_priorfixed} but this time the spectrum of $H$ is fixed, while that of $\Gpr$ has varying decay rates: $\widetilde{\alpha}=0.552$ (left), $\widetilde{\alpha}=1.103$ (middle) and $\widetilde{\alpha}=2.759$ (right).}
\label{fig:randomized_hessfixed}
\end{center}
\end{figure}

\subsection{Example 2: X-ray tomography}
\label{sec:tomo}
We consider a classical inverse problem of X-ray computed tomography (CT), where X-rays travel from sources to detectors through an object of interest. The intensities from multiple sources are measured at the detectors, the goal is to reconstruct the density of the object. In this framework, we investigate the performance of the optimal mean and covariance matrix approximations presented in Sections \ref{sec:theory} and \ref{sec:mean}.
\highlight{This synthetic example is motivated by a real application: real-time X-ray imaging of logs that enter a saw mill for the purpose of automatic quality control.  For instance, in the system commercialized by Bintec (www.bintec.fi), logs enter the X-ray system on fast-moving conveyer belt and fast reconstructions are needed. The imaging setting (e.g., X-ray source and detector locations) and the priors are fixed; only the data changes from one log cross-section to another. The basis for our posterior mean approximation can therefore be pre-computed, and repeated inversions can be carried out quickly with direct matrix formulas.}

We model the absorption of an X-ray along a line, $\ell_i$, using Beer's law: 
\begin{equation}
I_d = I_s\exp\left(-\int_{\ell_i}{\!\!x(s)ds} \right),
\label{eq:beer}
\end{equation}
where $I_d$ and $I_s$ are the intensities at the detector and at the source, respectively, and $x(s)$ is the density of the object at position $s$ on the line $\ell_i$. The computational domain is discretized into a grid and the density is 
assumed to be constant within each grid cell. The line integrals are approximated as
\begin{equation}
\int_{\ell_i}{\!\!x(s)ds}\, \approx \sum_{j=1}^{\mathrm{\#\ of\ cells}} \!\!\!g_{ij} x_j, %
\end{equation}
where $g_{ij}$ is the length of the intersection between line $\ell_i$ and cell $j$, and $x_j$ 
is the unknown density in cell $j$. 
The vector of absorptions along $m$ lines can then be approximated as
\begin{equation}
I_d \approx I_s \exp \left( -Gx \right),
\label{eq:tomo}
\end{equation} 
where ${I}_d$ is the vector of $m$ intensities at the detectors and $G=(g_{ij})$ is the $m \times n$ matrix of
intersection lengths for each of the $m$ lines. 
Even though the forward operator \eqref{eq:tomo} is nonlinear, 
the inference problem can be recast in a linear fashion by taking logarithm of both sides of \eqref{eq:tomo}.
This leads to the following linear  model for the inversion: ${y}={Gx}+\epsilon$, where the measurement vector is 
${y}=-\log({I}_d/I_s)$ and the measurement errors are assumed to be iid Gaussian,
$\epsilon \sim \Gauss(0,\sigma^2{I})$. 

The setup for the inference problem, borrowed from \cite{Heikkinen2008}, is as follows. The rectangular domain is discretized with an $n \times n$ grid. The true object consists of three circular inclusions, each of uniform density, inside an annulus. Ten X-ray sources are positioned on one side of a circle, and each source sends a fan of 100 X-rays that are measured by detectors on the opposite side of the object. Here, the 10 sources are distributed evenly so that they form a total illumination angle of 90 degrees, resulting in a limited-angle CT problem. We use the exponential model (\ref{eq:beer}) to generate synthetic data in a discretization-independent fashion by computing the exact intersections between the rays and the circular inclusions in the domain. Gaussian noise with standard deviation $\sigma=0.002$ is added to the simulated data. The imaging setup and data from one source are illustrated in Figure \ref{fig:tomodata}.

The unknown density is estimated on a $128 \times 128$ grid. Thus the discretized vector, $x$, has length $16384$, and direct computation of the posterior mean and the posterior covariance matrix, as well as generation of posterior samples, can be computationally nontrivial. To define the prior distribution, $x$ is modeled as a discretized solution of a stochastic PDE of the form:
\begin{equation} 
\label{eq:SPDE}
\gamma \left(  \kappa^2 \mathcal{I} -\triangle   \right) x(s) = \mathcal{W}(s), \qquad s\in\Omega,
\end{equation}
where $ \mathcal{W}$ is a white noise process, $\triangle$ is the Laplacian operator, and $\mathcal{I}$ is the identity operator. The solution of \eqref{eq:SPDE} is a Gaussian random field whose correlation length and variance are controlled by the free parameters $\kappa$ and $\gamma$, respectively. A square root of the prior precision matrix of $x$ (which is positive definite) can then be easily computed (see \cite{lindgren2011explicit} for details). We use $\kappa=10$ and $\gamma=\sqrt{800}$ in our simulations.

Our first task is to compute an optimal  approximation of $\Gpos$ as a low-rank negative update of $\Gpr$ (cf.\ Theorem \ref{Thm:main_th}).  Figure \ref{fig:tomosamp} (top row) shows the convergence of the approximate posterior variance as the rank of the update increases. The zero-rank update corresponds to $\Gpr$ (first column). For this formally $16384$-dimensional problem, a good approximation of the posterior variance is achieved with a rank $200$ update; hence the data are informative only on a low-dimensional subspace. The quality of the covariance matrix approximation is also reflected in the structure of samples drawn from the approximate posterior distributions (bottom row). All five of these samples are drawn using the same random seed and the exact posterior mean, so that all the differences observed are due to the approximation of $\Gpos$. Already with a rank $100$ update, the small-scale features of the approximate posterior sample match those of the exact posterior sample. In applications, agreement in this ``eye-ball norm'' is important.
\highlight{Of course, Theorem \ref{Thm:main_th} also provides an exact formula for the error in the posterior covariance; this error is shown in the right panel of Figure \ref{fig:tomoeigs} (blue curve).}

\begin{figure}[!htp]
\begin{center}
\includegraphics[width=0.45\textwidth]{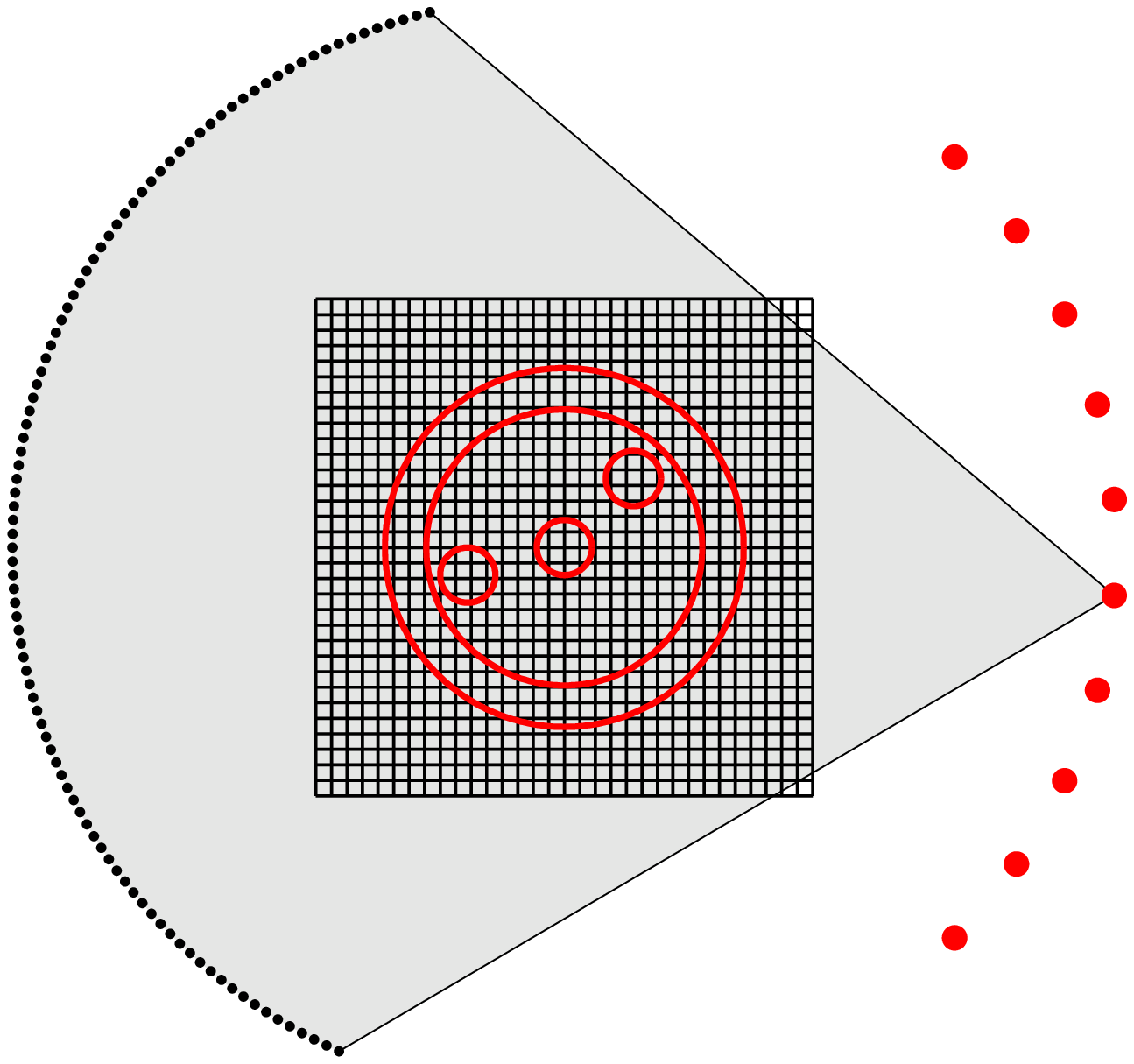}
\includegraphics[width=0.45\textwidth]{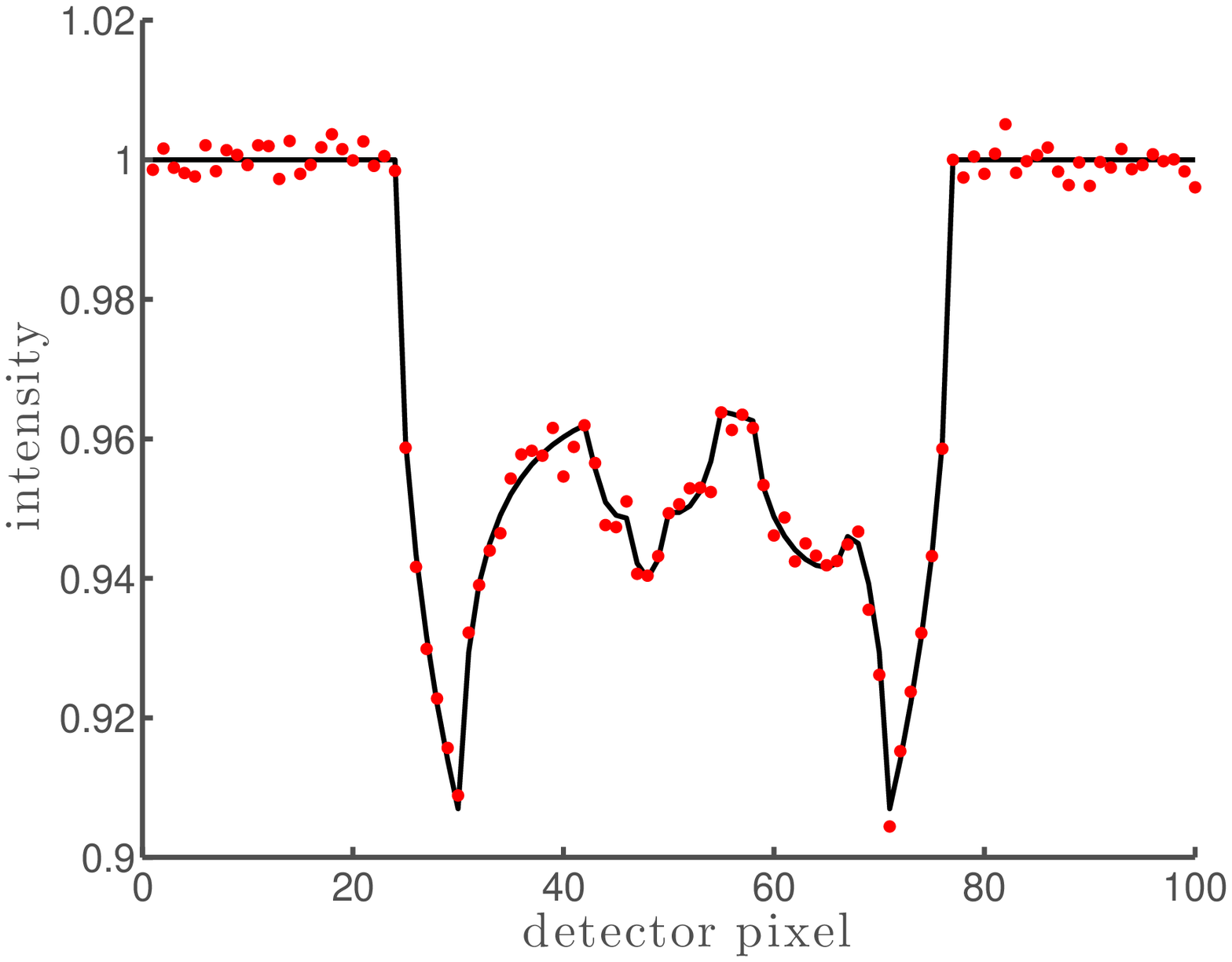}
\caption{X-ray tomography problem. Left: Discretized domain, true object, sources (red dots), and detectors corresponding to one source (black dots). The fan transmitted by one source is illustrated in gray. The density of the object is $0.006$ in the outer ring and $0.004$ in the three inclusions; the background density is zero. Right: The true simulated intensity (black line) and noisy measurements (red dots) for one source.} 
\label{fig:tomodata}
\end{center}
\end{figure}

\begin{figure}[!htp]
\begin{center}
\includegraphics[width=0.8\textwidth]{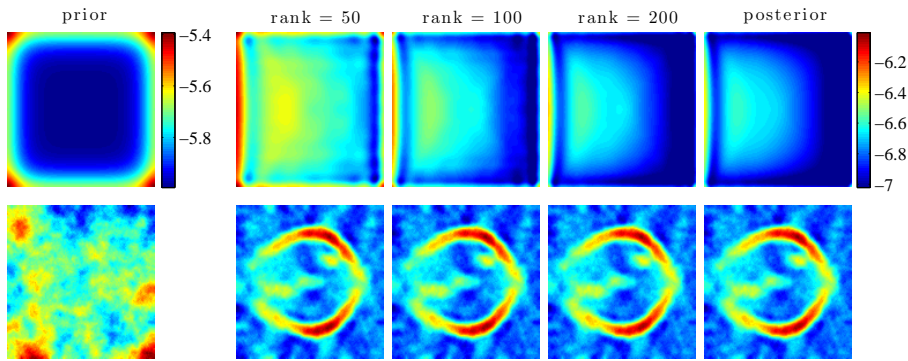}
\caption{X-ray tomography problem. First column:  Prior variance field, in log scale (top), and a sample drawn from the prior distribution (bottom). Second through last columns (left to right): Variance field, in log scale, of the approximate posterior as the rank of the update increases (top); samples from the corresponding approximate posterior distributions (bottom) assuming exact knowledge of the  posterior mean.}
\label{fig:tomosamp}
\end{center}
\end{figure}

Our second task is to assess the performances of the two optimal posterior mean approximations given in Section \ref{sec:mean}. We will use $\muposr$ to denote the low-rank approximation and $\muposhr$ to denote the low-rank \textit{update} approximation. Recall that both approximations are linear functions of the data $y$, given by $\muposr = A^\ast y$ with $A^\ast\in\Ac_r$ and $\muposhr = \widehat{A}^\ast y$ with $\widehat{A}^\ast \in\widehat{\Ac}_r$, where the classes $\Ac_r$ and $\widehat{\Ac}_r$ are defined in \eqref{eq:Aclass}. As in Section~\ref{sec:mean}, we shall use $\Ac$ to denote either of the two classes.

Figure \ref{fig:tomores} shows the normalized error $\| \mu(y) - \mupos\|_{\Gpos^{-1}} / \|   \mupos\|_{\Gpos^{-1}}$ for different approximations $\mu(y)$ of the true posterior mean $\mupos$ and a fixed realization $y$ of the data. The error is a function of the order $r$ of the approximation class $\Ac$. Snapshots of $\mu(y)$  are shown along the two error curves. For reference, $\mupos$ is also shown at the top. We see that the errors decrease monotonically, but that the low-rank approximation outperforms the low-rank update approximation for lower values of $r$. This is consistent with the discussion at the end of Section \ref{sec:mean}; the crossing point of the error curves is also consistent with that analysis. In particular, we expect the low-rank update approximation to outperform the low-rank approximation only when the approximation starts to include generalized eigenvalues of the pencil $(H,\Gpr^{-1})$ that are less than one---i.e., once the approximations are no longer \textit{under-resolved}. This can be confirmed by comparing Figure \ref{fig:tomores} with the decay of the generalized eigenvalues of the  pencil $(H,\Gpr^{-1})$ in the right panel of Figure \ref{fig:tomoeigs} (blue curve). 

\highlight{
On top of each snapshot in Figure \ref{fig:tomores}, we show the \textit{relative CPU time} required to compute the corresponding posterior mean approximation for each new realization of the data. The relative CPU time is defined as the time required to compute this approximation\footnote{This timing does not include the computation of \eqref{eq: mean_approx_lowrank} or \eqref{eq: mean_approx_fullrank}, which should be regarded as \textit{offline} steps. Here we report the time necessary to apply the optimal linear function to any new realization of the data, i.e., the \textit{online} cost.}
divided by the time required to apply the posterior precision matrix to a vector. This latter operation is essential to computing the posterior mean via an iterative solver, such as a Krylov subspace method. These solvers are a standard choice for computing the posterior mean in large-scale inverse problems. Evaluating the ratio allows us to determine how many solver iterations could be performed with a computational cost roughly equal to that of approximating the posterior mean for a new realization of the data.}
Based on the reported times, a few observations can be made. First of all, as anticipated in Section \ref{sec:mean}, computing $\muposr$ for any new realization of the data is faster than computing $\muposhr$. Second, obtaining an accurate posterior mean approximation requires roughly $r=200$, and the relative CPU times for this order of approximation are 7.3 for $\muposr$ and 29.0 for $\muposhr$. These are roughly the number of iterations of an iterative solver that one could take for equivalent computational cost. That is, the speedup of the posterior mean approximation compared to an iterative solver is not particularly dramatic in this case, because the forward model $A$ 
is simply a sparse matrix that is cheap to apply. This is different for the heat equation example discussed in Section~\ref{sec:heat}.

Note that the above computational time estimates exclude other costs associated with iterative solvers. For instance, preconditioners are often applied; these significantly decrease the number of iterations needed for the solvers to converge but, on the other hand, increase the cost per iteration. A popular approach for solving the posterior mean efficiently is to use the prior covariance as the preconditioner \cite{bui2012extreme}. In the limited-angle tomography problem, including the application of this  preconditioner in the reference CPU time would reduce the relative CPU time of our $r=200$ approximations to 0.48 for $\muposr$ and 1.9 for $\muposhr$. That is, the cost of computing our approximations is roughly equal to \textit{one iteration} of a prior-preconditioned iterative solver. The large difference compared to the case without 
preconditioning is due to the fact that the cost of applying the prior here is computationally much higher than applying the forward model.

Figure \ref{fig:tomoerror_hd_limited} (left panel) shows unnormalized errors in the approximation of $\mupos$,
\begin{equation}\label{eq:errmu}
\|  e (y) \|_{\Gpos^{-1}}^2 = \| \muposr - \mupos  \|^2_{\Gpos^{-1}}\quad\mbox{and}\quad
\|  \widehat{e}  (y) \|_{\Gpos^{-1}}^2 = \| \muposhr - \mupos  \|^2_{\Gpos^{-1}},
\end{equation}
for the same realization of $y$ used in Figure \ref{fig:tomores}. In the same panel we also show the expected values of these errors over the prior predictive distribution of $y$, which is exactly the $r$-dependent component of the Bayes risk given in Theorems \ref{thm:mean_approx_lowrank} and \ref{thm:mean_approx_fullrank}. Both sets of errors decay with increasing $r$ and show a similar crossover between the two approximation classes. But the particular error $\|  e (y) \|_{\Gpos^{-1}}^2$ departs consistently from its expectation; this is not unreasonable in general (the mean estimator has a nonzero variance), but the offset may be accentuated in this case because the data are generated from an image that is not drawn from the prior. (The right panel of Figure \ref{fig:tomoerror_hd_limited}, which comes from Example 3, represents a contrasting case.)

By design, the posterior approximations described in this paper perform well when the data inform a low-dimensional subspace of the parameter space. 
To better understand this effect, we also consider a \textit{full-angle} configuration of the tomography problem, wherein the sources and detectors are evenly spread around the entire unknown object. In this case, the data are more informative than in the limited-angle configuration. This can be seen in the decay rate of the generalized eigenvalues of the pencil $(H,\Gpr^{-1})$ in the center panel of Figure \ref{fig:tomoeigs} (blue and red curves); eigenvalues for the full-angle configuration decay more slowly than for the limited-angle configuration. Thus, according to the optimal loss given in \eqref{eq:error_estimate} (Theorem \ref{Thm:main_th}), the prior-to-posterior update in the full-angle case must be of greater rank than the update in the limited-angle case for any given approximation error.  Also, good approximation of $\mupos$ in the full-angle case requires higher order of the approximation class $\Ac$, as is shown in Figure \ref{fig:tomomean_hd_full}. But because the data are strongly informative, they allow an almost perfect reconstruction of the underlying truth image. The relative CPU times are similar to the limited angle case: roughly 8 for $\muposr$ and 14 for $\muposhr$. If preconditioning with the prior covariance is included in the reference CPU time calculation, the relative CPU times drop to 1.5 for $\muposr$ and to 2.6 for $\muposhr$.
\highlight
{
We remark that in realistic applications of X-ray tomography, the limited angle setup is extremely common as it is cheaper and more flexible (yielding smaller and lighter devices) than a full-angle configuration.
}

\begin{figure}[!htp]
\begin{center}
\includegraphics[width=0.8\textwidth]{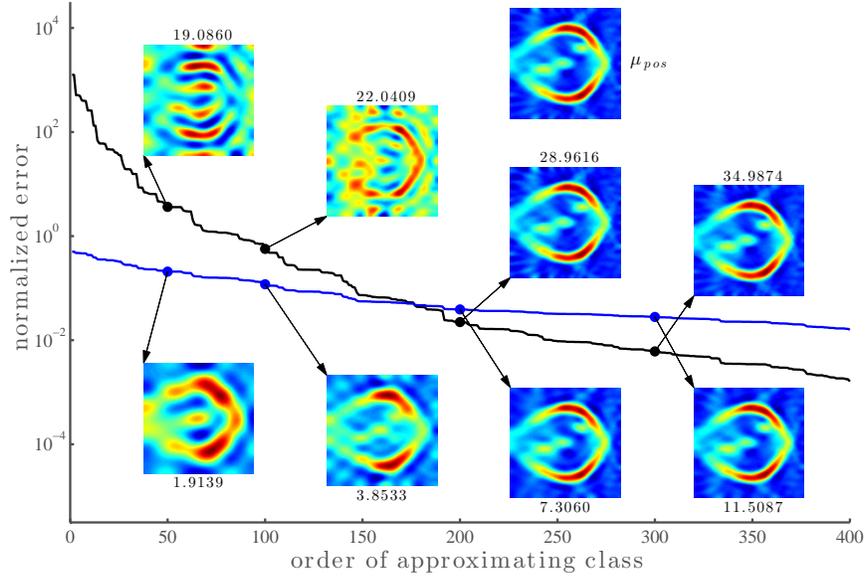}
\caption{Limited-angle X-ray tomography: Comparison of the optimal posterior mean approximations, $\muposr$ (blue) and $\muposhr$ (black) of
$\mupos$ for a fixed realization of the data $y$, as a function of the order $r$ of the approximating
classes $\Ac_r$ and $\widehat{\Ac}_r$, respectively.
The normalized error for an approximation $\mu(y)$ is defined as 
    $  \left\| \mu(y) - \mupos   \right\|_{\Gpos^{-1}} /  
    \left\|   \mupos \right\|_{\Gpos^{-1}}$. The numbers above or below the snapshots indicate the relative CPU time of the corresponding mean approximation---i.e., the time required to compute the approximation divided by the time required to apply the posterior precision matrix to a vector.}
\label{fig:tomores}
\end{center}
\end{figure}

\begin{figure}[!htp]
\begin{center}
\includegraphics[width=0.35\textwidth]{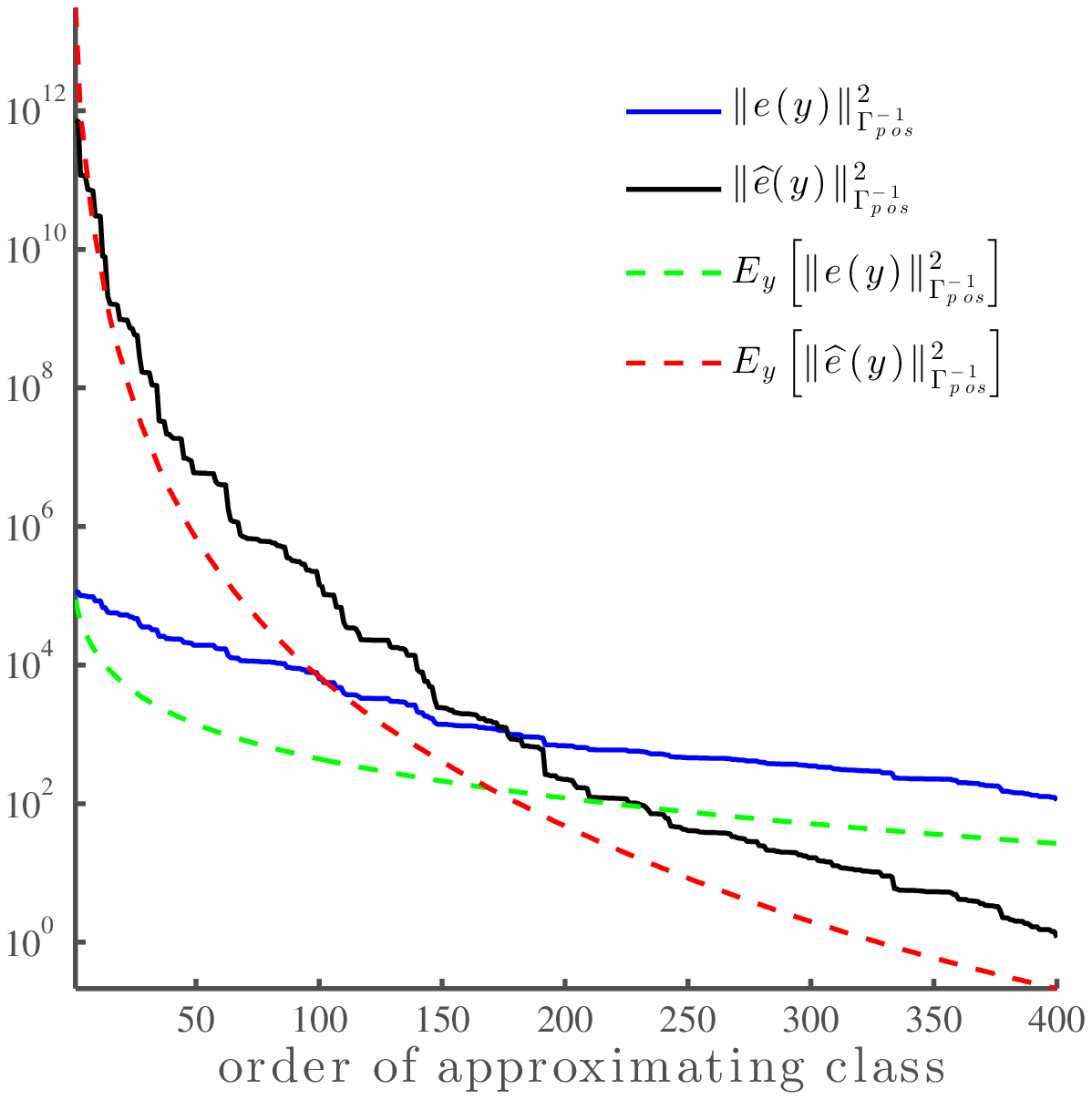} \includegraphics[width=0.35\textwidth]{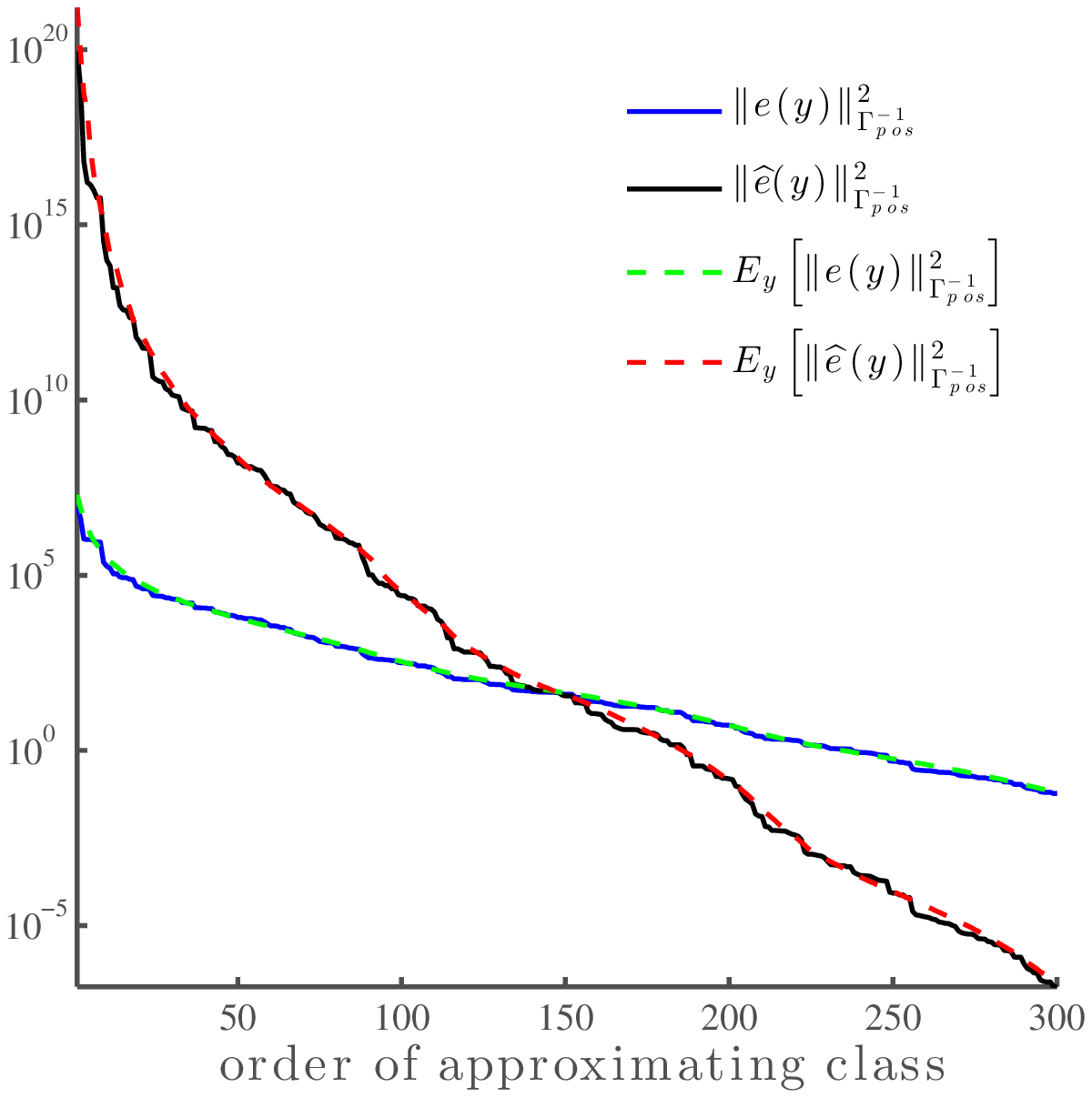}
\caption{The errors $\|  e(y) \|_{\Gpos^{-1}}^2$ (blue) and  $\|\widehat{e}(y) \|_{\Gpos^{-1}}^2$ (black)
defined by \eqref{eq:errmu}, and their expected values in green and red, respectively; for Example
2 (left panel) and Example 3 (right panel).}
\label{fig:tomoerror_hd_limited}
\end{center}
\end{figure}

\begin{figure}[!htp]
\begin{center}
\includegraphics[width=0.6\textwidth]{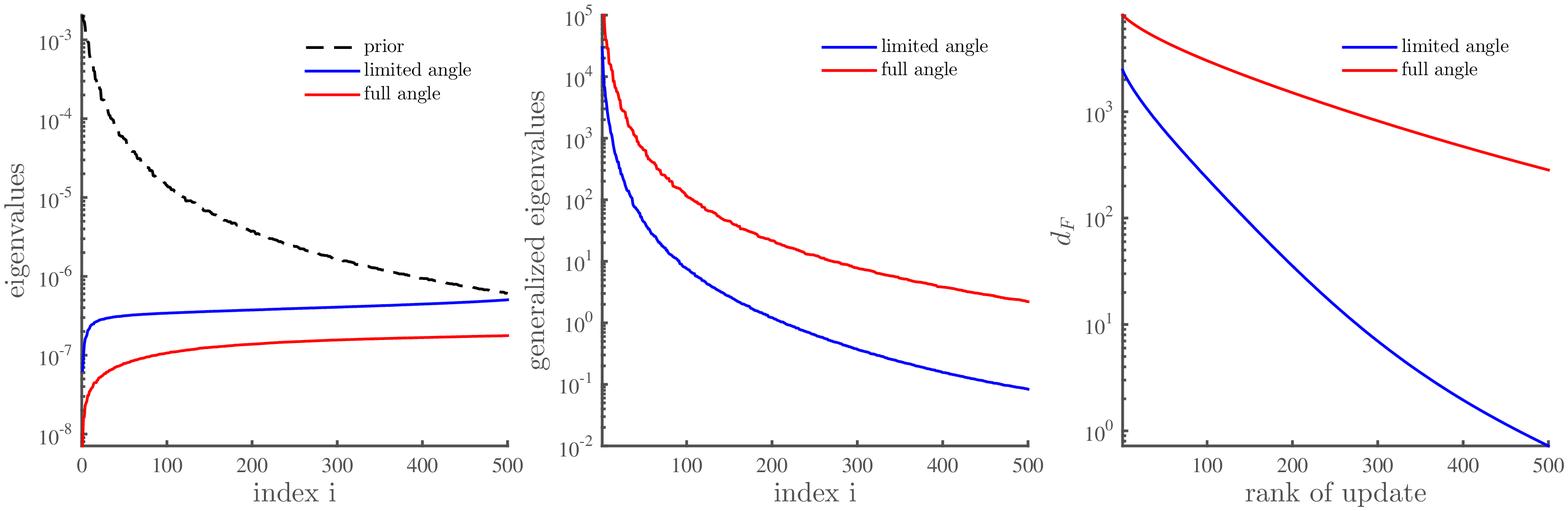}
\caption{
\highlight{
Left: Leading eigenvalues of $\Gpr$ and $H^{-1}$  in the limited-angle and full-angle X-ray tomography problems. Center: Leading generalized eigenvalues of the pencil $(H,\Gpr^{-1})$ in the limited-angle (blue) and full-angle (red) cases. Right: $\df (\Gpos, \Gposh)$ as a function of the rank of the update $KK^\top$, with $\Gposh = \Gpr - KK^\top$, in the limited-angle (blue) and full-angle (red) cases.
}
}
\label{fig:tomoeigs}
\end{center}
\end{figure}

\begin{figure}[!htp]
\begin{center}
\includegraphics[width=0.8\textwidth]{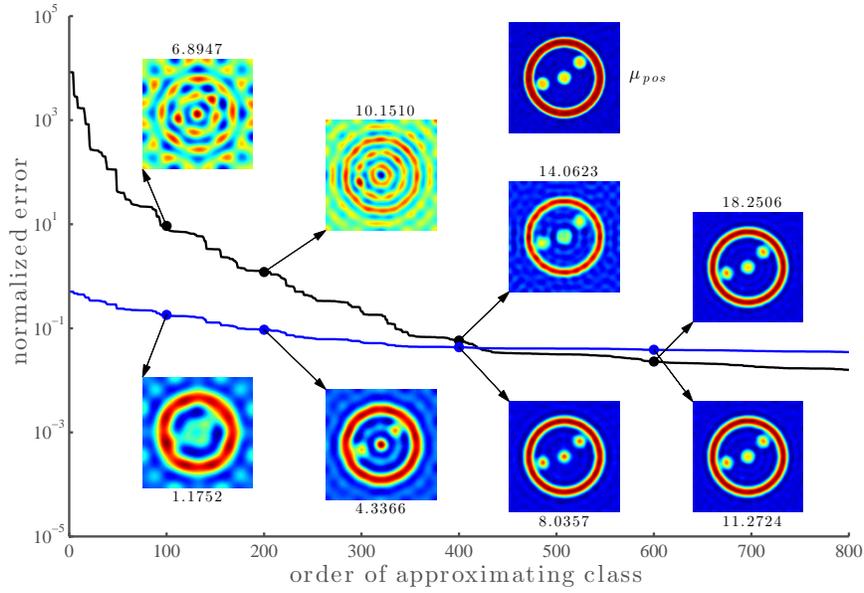}
\caption{Same as Figure \ref{fig:tomores}, but for full-angle X-ray tomography (sources and receivers spread uniformly around the entire object).}
\label{fig:tomomean_hd_full}
\end{center}
\end{figure}

\subsection{Example 3: Heat equation}
\label{sec:heat}

Our last example is the classic linear inverse problem of solving for the initial conditions of an inhomogeneous heat equation.
Let $u(s, t)$ be the time dependent state of the heat equation on $s = (s_1, s_2)\in \Omega = [0, 1]^2$, $t\geq 0$,
and let $\kappa(s)$ be the heat conductivity field.
Given initial conditions, $u_0(s) = u(s, 0)$, the state evolves in time according to the linear heat equation:
\begin{eqnarray} 
\frac{\partial u(s, t)}{\partial t} & = &- \nabla \cdot (\kappa(s) \nabla u(s, t)), \qquad s \in \Omega,\,\, t > 0, \nonumber \\
\kappa(s) \nabla u(s, t) \cdot {n}({s}) & = & 0,   \qquad {s} \in \partial\Omega,\,\, t > 0,
\label{eq:heat}
\end{eqnarray}
where ${n}({s})$ denotes the outward-pointing unit normal at $s\in\partial \Omega$. 
We place $n_s = 81$  sensors at the locations $s_1,\ldots, s_{n_s}$, uniformly spaced within the lower left quadrant of the spatial domain, as illustrated by the black dots in Figure \ref{fig:heat_setup}. We use a finite-dimensional discretization of the parameter space based on the finite element method on a 
regular $100\times100$ grid, $\{s_i^\prime\}$. Our goal is to infer the vector $x=(u_0(s_i^\prime))$ of initial conditions 
on the grid. Thus, the dimension of the parameter space for the inference problem is $n=10^4$.
We use data measured at $50$ discrete times $t = t_1, t_2, \ldots, t_{50}$, where $t_i = i \triangle t$, and $\triangle t = 2 \times 10^{-4}$.
At each time $t_i$, pointwise observations of the state $u$ are taken at these sensors, i.e., 
\begin{equation} \label{eq:obs_operator}
 d_i = \mathcal{C} u({s}, t_i),
\end{equation}
where $\mathcal{C}$ is the observation operator that maps the function $u(s,t_i)$ to $d=(u(s_1,t_i),\ldots,u(s_n,t_i))^\top$.
The vector of observations is then 
\(
d = [d_{1} ; d_{2} ; \ldots ; d_{50}].
\)
The noisy data vector is $y= d + \varepsilon$, where $\varepsilon \sim \Gauss(0,\sigma^2 I)$ and $\sigma = 10^{-2}$. 
Note that the data are a linear function of the initial conditions, perturbed by Gaussian noise. Thus the data can be written as:
\begin{equation}
   y = G x + \varepsilon, \qquad \varepsilon \sim \Gauss( 0 , \sigma^2 I ).
\end{equation}
where $G$ is a linear map defined by the composition of the forward model \eqref{eq:heat} with the observation operator \eqref{eq:obs_operator}, both linear. 

We generate synthetic data by evolving the initial conditions shown in Figure \ref{fig:heat_setup}. This ``true'' value of the inversion parameters $x$ is a discretized realization of a Gaussian process satisfying an SPDE of the same form used in the previous tomography example, but now with a non-stationary permeability field. In other words, the truth is a draw from the prior in this example (unlike in the previous example), and the prior Gaussian process satisfies the following SPDE:
\begin{equation}
\gamma \left(  \kappa^2 \mathcal{I} - \nabla \cdot {\bf c}(s)\nabla \  \right) x(s) = \mathcal{W}(s) \qquad s\in\Omega,
\end{equation}
where ${\bf c}(s)$ is the space-dependent permeability tensor. 

Figure \ref{fig:heat_mean} and the right panel in Figure \ref{fig:tomoerror_hd_limited} show our numerical results. They have the same interpretations as Figures \ref{fig:tomores} and \ref{fig:tomoerror_hd_limited} in the tomography example. The trends in the figures are consistent with those encountered in the previous example and confirm the good performance of the optimal low-rank approximation. 
Notice that in Figures \ref{fig:heat_mean} and \ref{fig:tomoerror_hd_limited} the approximation of the posterior mean appears to be nearly perfect (visually) once the error curves for the two approximations cross. This is somewhat expected from the theory since we know that the crossing point should occur when  the approximations start to use eigenvalues of the \highlight{pencil $(H,\Gpr^{-1})$} that are less than one---that is, once we have exhausted directions in the parameter space where the data are more constraining than the prior.

Again, we report the relative CPU time for each posterior mean approximation above/below the corresponding snapshot in Figure \ref{fig:heat_mean}. The results differ significantly from the tomography example. For instance, at order $r=200$, which yields approximations that are visually indistinguishable from the true mean, the relative CPU times are 0.001 for $\muposr$ and 0.53 for $\muposhr$. 
\highlight{Therefore we can compute an accurate mean approximation for a new realization of the data much more quickly than taking one iteration of an iterative solver. Recall that, consistent with the setting described at the start of Section~\ref{sec:mean}, this is a comparison of \textit{online} times, after the matrices \eqref{eq: mean_approx_lowrank} or \eqref{eq: mean_approx_fullrank} have been precomputed.}
The difference between this case and tomography example of Section~\ref{sec:tomo} is due to the higher CPU cost of applying the forward and adjoint models for the heat equation---solving a time dependent PDE versus applying a sparse matrix. Also, because the cost of applying the prior covariance is negligible compared to that of the forward and adjoint solves in this example, preconditioning the iterative solver with the prior would not strongly affect the reported relative CPU times, unlike the tomography example.

Figure \ref{fig:heat_eigens} illustrates some important directions characterizing the heat equation inverse problem.  The first two columns show the four leading eigenvectors of, respectively, $\Gpr$ and $H$. Notice that the support of the eigenvectors of $H$ concentrates around the sensors. The third column shows the four leading directions $(\widehat{w}_i)$ defined in Theorem \ref{Thm:main_th}. These directions define the optimal prior-to-posterior covariance matrix update (cf.\ \eqref{minimizer_theorem}). This update of $\Gpr$ is \textit{necessary} to capture directions $(\widetilde{w}_i)$ of greatest relative difference between prior and posterior variance (cf.\ Corollary \ref{cor:Minv}). The four leading directions $(\widetilde{w}_i)$ are shown in the fourth column. The support of these modes is again concentrated around the sensors, which intuitively makes sense as these are directions of greatest variance reduction.

\begin{figure}[!htp]
\begin{center}
\includegraphics[width=0.7\textwidth]{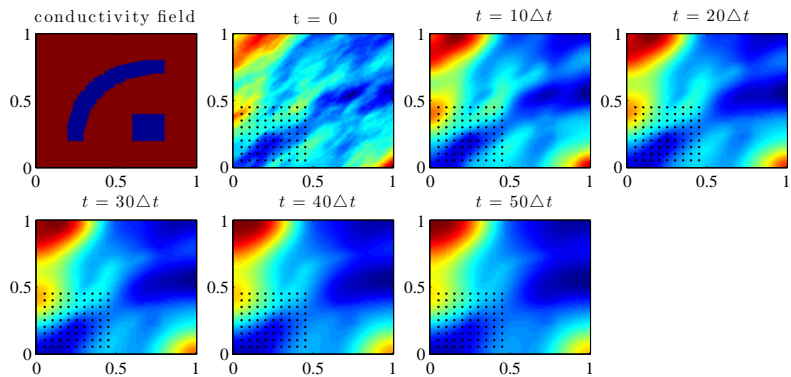}
\caption{Heat equation (Example 3). Initial condition (top left) and several snapshots of the states at different times. Black dots indicate sensor locations.}
\label{fig:heat_setup}
\end{center}
\end{figure}

\begin{figure}[!htp]
\begin{center}
\includegraphics[width=0.8\textwidth]{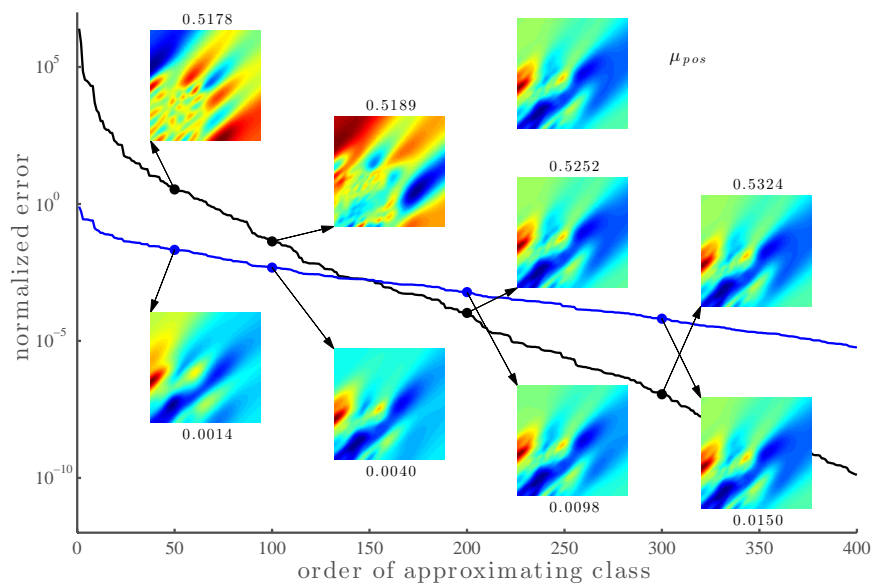}
\caption{Same as Figure~\ref{fig:tomores}, but for Example 3 (initial condition inversion for the heat equation).}
\label{fig:heat_mean}
\end{center}
\end{figure}

\begin{figure}[!htp]
\begin{center}
\includegraphics[width=0.4\textwidth]{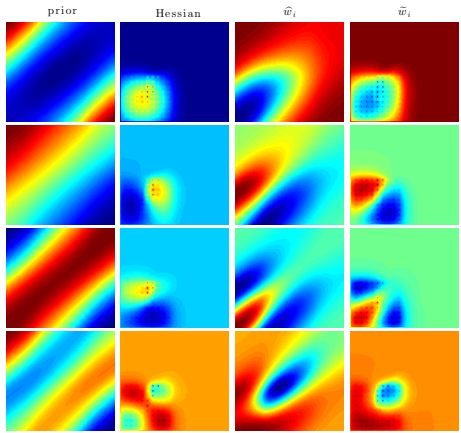}
\caption{Heat equation (Example 3). First column: Four leading eigenvectors of $\Gpr$. Second column: Four leading eigenvectors of 
$H$. Third column: Four leading directions $(\widehat{w}_i)$ (cf.\ \eqref{minimizer_theorem}).
 Fourth column: Four leading directions $(\widetilde{w}_i)$ (cf.\ Corollary \ref{cor:Minv})}
\label{fig:heat_eigens}
\end{center}
\end{figure}

\section{Conclusions}
\label{s:conclusions}

This paper has presented and characterized optimal approximations of the Bayesian solution of linear inverse problems, with Gaussian prior and noise distributions defined on finite-dimensional spaces. In a typical large-scale inverse problem, observations may be informative---relative to the prior---only on a low-dimensional subspace of the parameter space. Our approximations therefore identify and exploit low-dimensional structure in the \textit{update} from prior to posterior. 

We have developed two types of optimality results. In the first, the posterior covariance matrix is approximated as a low-rank negative semidefinite update of the prior covariance matrix. We describe an update of this form that is optimal with respect to a broad class of loss functions between covariance matrices, exemplified by the  F\"{o}rstner metric \cite{forstner2003metric} for symmetric positive definite matrices. We argue that this is the appropriate class of loss functions with which to evaluate approximations of the posterior covariance matrix, and show that optimality in such metrics identifies directions in parameter space along which the posterior variance is reduced the most, relative to the prior.  
Optimal low-rank updates are derived from a generalized eigendecomposition of the pencil defined by the negative log-likelihood Hessian and the prior precision matrix. These updates have been proposed in previous work \cite{flath2011fast}, but our work complements these efforts by characterizing the optimality of the resulting approximations.
Under the assumption of exact knowledge of the posterior mean, our results extend to optimality statements between the associated distributions (e.g., optimality in the Hellinger distance and in the Kullback-Leibler divergence).
Second, we have developed fast approximations of the posterior mean that are useful when repeated evaluations thereof are required for multiple realizations of the data (e.g., in an online inference setting). These approximations are optimal in the sense that they minimize the Bayes risk for squared-error loss induced by the posterior precision matrix. The most computationally efficient of these approximations expresses the posterior mean as the product of a single low-rank matrix with the data.
We have demonstrated the covariance and mean approximations numerically on a variety  of inverse problems: synthetic problems constructed from random Hessian and prior covariance matrices; an X-ray tomography problem with different observation scenarios; and inversion for the initial condition of a heat equation, with localized observations and a non-stationary prior.

This work has several possible extensions of interest, some of which are already part of ongoing research. First, it is natural to generalize the present approach to infinite-dimensional parameter spaces endowed with Gaussian priors. This setting is essential to understanding and formalizing Bayesian inference over function spaces \cite{bui2013computational,stuart2010inverse}. Here, by analogy with the current results, one would expect the posterior covariance operator to be well approximated by a finite-rank negative perturbation of the prior covariance operator. A further extension could allow the data to become infinite-dimensional as well. 
Another important task is to generalize the present methodology to inverse problems with nonlinear forward models. One approach for doing so is presented in \cite{cui2014likelihood}; other approaches are certainly possible. 
Yet another interesting research topic is the study of analogous approximation techniques for sequential inference.  We note that the assimilation step in a linear (or linearized) data assimilation scheme can be already tackled within the framework presented here. But the nonstationary setting, where inference is interleaved with evolution of the state, introduces the possibility for even more tailored and structure-exploiting approximations. %

\section*{Acknowledgments}
This work was supported by the US Department of Energy, Office of Advanced Scientific Computing (ASCR), under grant numbers DE-SC0003908 and DE-SC0009297. We thank Jere Heikkinen from Bintec Ltd.\ for providing us with the code used in Example 2.
\highlightTwo{
We also thank Julianne and Matthias Chung for many insightful discussions.
}

\appendix
\section{Technical results}\label{sec:proofs}
Here we collect the proofs and other technical results necessary to support the statements made in the previous sections.
\medskip	

We start with an auxiliary approximation result that plays an important role in our analysis. Given a semi-positive definite diagonal matrix $D$, we seek an approximation of $D+I$  by a rank $r$ perturbation of the identity, $ U U^\top+I$, that minimizes a loss function from the class $\Lc$ defined in \eqref{eq:form_loss_fun}. The following lemma shows that the optimal solution $\Uh\Uh^\top$ is simply the best rank $r$ 
approximation of the matrix $D$ in the Frobenius norm. 
 
\medskip

\begin{lemma}[Approximation lemma] \label{lemma:diagcase}
Let $D=\mathrm{diag}\{d_1^2,\ldots,d_n^2\}$, with $d_{i}^2\geq d_{i+1}^2$, and $L\in \Lc.$ 
Define the functional $\Jc:\R^{n\times r}\to \R$, as:
\(
\Jc(U) = L( UU^\top + I , D + I)=\sum_i f(\sigma_i),
\)
where $(\sigma_i)$ are the generalized eigenvalues of the pencil $(UU^\top+I,D+I)$ and $f\in\Uc$.
Then:
\begin{itemize}
\item[(i)]  There is a minimizer, $\Uh$, of $\Jc$ such that
\begin{equation}\label{eq:Uhat}
\Uh\Uh^\top = \sum_{i=1}^r d_i^2\, e_i e_i^\top.
\end{equation}
 where $(e_i)$ are the columns of the identity matrix.
\item[(ii)]  If the first $r$ eigenvalues of $D$ are distinct, then any minimizer of $\Jc$
satisfies \eqref{eq:Uhat}.
\end{itemize}
 \end{lemma}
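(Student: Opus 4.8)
The plan is to pass from the pencil to an ordinary eigenvalue problem. Since the generalized eigenvalues of a pencil are invariant under congruence (as reflected in \eqref{eq:dfprop}), the numbers $\sigma_i$ in $\Jc(U)$ are exactly the eigenvalues of
\[
C \;:=\; (D+I)^{-1/2}\bigl(UU^\top+I\bigr)(D+I)^{-1/2} \;=\; (D+I)^{-1}+P ,
\]
where $P:=(D+I)^{-1/2}UU^\top(D+I)^{-1/2}$ is symmetric positive semidefinite with $\mathrm{rank}(P)=\mathrm{rank}(U)\le r$. As $UU^\top$ ranges over all positive semidefinite matrices of rank at most $r$, so does $P$; hence minimizing $\Jc$ is the same as minimizing $\sum_i f(\lambda_i(C))$ over all $C$ of this form. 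Note that for $U=\Uh$ one gets $C=\mathrm{diag}\bigl(1,\dots,1,(1+d_{r+1}^2)^{-1},\dots,(1+d_n^2)^{-1}\bigr)$, with $r$ leading entries equal to $1$.

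For part (i) I would bound the eigenvalues of $C$, sorted as $\lambda_1(C)\ge\dots\ge\lambda_n(C)$, in two groups. Every $f\in\Uc$ is strictly decreasing on $(0,1)$ and strictly increasing on $(1,\infty)$, hence attains its global minimum at $1$; therefore $\sum_{i=1}^r f(\lambda_i(C))\ge r\,f(1)$. For the tail, Weyl's inequality applied to $C=(D+I)^{-1}+P$ with $\mathrm{rank}(P)\le r$ gives $\lambda_{k+r}(C)\le\lambda_k\bigl((D+I)^{-1}\bigr)=(1+d_{n-k+1}^2)^{-1}\le 1$ for $k=1,\dots,n-r$. Since $f$ is strictly decreasing on $(0,1]$, this yields $f(\lambda_{k+r}(C))\ge f\bigl((1+d_{n-k+1}^2)^{-1}\bigr)$, and summing over $k$ gives $\sum_{i>r}f(\lambda_i(C))\ge\sum_{i>r}f\bigl((1+d_i^2)^{-1}\bigr)$. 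Adding the two bounds proves $\Jc(U)\ge r\,f(1)+\sum_{i>r}f\bigl((1+d_i^2)^{-1}\bigr)$, and the computation above shows this value is attained at $\Uh$; hence (i), together with the minimum-loss formula.

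For part (ii) I would chase the cases of equality. Equality in $\sum_{i=1}^r f(\lambda_i(C))=r\,f(1)$ forces $\lambda_1(C)=\dots=\lambda_r(C)=1$ because $f(x)>f(1)$ for $x\ne 1$; equality in the tail, together with $\lambda_{k+r}(C)\le(1+d_{n-k+1}^2)^{-1}\le 1$ and strict monotonicity of $f$ on $(0,1]$, forces $\lambda_{k+r}(C)=(1+d_{n-k+1}^2)^{-1}$. Thus $C$ has the same spectrum as the matrix $C_{\Uh}$ above. To pin down $C$ itself---equivalently $UU^\top$---I would use the equality case of Weyl's inequality: for each $k=1,\dots,n-r$ there is a unit vector $v_k$ that is simultaneously an eigenvector of $(D+I)^{-1}$ for the eigenvalue $(1+d_{n-k+1}^2)^{-1}$, of $P$ for the eigenvalue $0$ (as $\lambda_{r+1}(P)=0$), and of $C$. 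Reading the distinctness hypothesis as $d_1^2>\dots>d_r^2>d_{r+1}^2$---so that none of the eigenvalues $(1+d_j^2)^{-1}$, $j>r$, is shared by the first $r$ coordinate directions---the $v_k$ may be chosen orthonormal and span $\mathrm{span}(e_{r+1},\dots,e_n)$; hence $\ker P\supseteq\mathrm{span}(e_{r+1},\dots,e_n)$ and, by symmetry of $P$, $\mathrm{range}(P)\subseteq\mathrm{span}(e_1,\dots,e_r)$. Then $C=(D+I)^{-1}+P$ leaves $\mathrm{span}(e_1,\dots,e_r)$ invariant and all of its eigenvalues there equal $1$, which forces $P|_{\mathrm{span}(e_1,\dots,e_r)}=I_r-\mathrm{diag}\bigl((1+d_i^2)^{-1}\bigr)_{i\le r}=\mathrm{diag}\bigl(d_i^2(1+d_i^2)^{-1}\bigr)_{i\le r}$. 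Undoing the congruence gives $UU^\top=\sum_{i=1}^r d_i^2\,e_ie_i^\top$, which is \eqref{eq:Uhat}.

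The routine part is the lower bound in (i): it is just Weyl's inequality plus the shape of $f$. The delicate step is the uniqueness claim (ii), i.e.\ passing from ``$C$ has the prescribed spectrum'' to ``$C$ equals $C_{\Uh}$''; this is where the equality case of Weyl's inequality and the distinctness of the $d_i^2$ are essential, and where one must be careful, since mere pairwise distinctness of $d_1^2,\dots,d_r^2$ without separation from $d_{r+1}^2$ does not suffice for uniqueness.
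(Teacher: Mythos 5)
Your proof is correct but follows a genuinely different route from the paper's. The paper treats $\Jc$ as a differentiable spectral functional: it invokes Lewis's theorem on derivatives of eigenvalue functions to compute the gradient $\Jc'(U)$, proves coercivity to guarantee a global minimizer, and shows that every critical point satisfies $D\,(\Uh\Uh^\top)=(\Uh\Uh^\top)^2$, so that $\Uh\Uh^\top$ is a sum $\sum_\ell d_{k_\ell}^2 e_{k_\ell}e_{k_\ell}^\top$ over some index subset; comparing losses then selects the leading $r$ indices. You instead obtain the sharp lower bound $\Jc(U)\ge r\,f(1)+\sum_{i>r}f\bigl((1+d_i^2)^{-1}\bigr)$ directly from Weyl's inequality applied to $C=(D+I)^{-1}+P$ with $\mathrm{rank}(P)\le r$, together with the shape of $f$. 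This is more elementary (no differentiability of spectral functions, no existence argument needed) and yields the minimum-loss formula \eqref{eq:error_estimate} as a by-product; the paper's route buys a classification of all critical points, which makes its uniqueness step short. Your observation that pairwise distinctness of $d_1^2,\dots,d_r^2$ alone does not suffice for (ii) and that one needs $d_r^2>d_{r+1}^2$ is correct (take $r=1$ and $d_1^2=d_2^2$: the hypothesis is vacuous but the minimizer is not unique); the paper's statement shares this imprecision.

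One step in your part (ii) is under-justified: choosing the common eigenvectors $v_k$ ``orthonormal and spanning $\mathrm{span}(e_{r+1},\dots,e_n)$'' is not immediate when the tail eigenvalues $d_{r+1}^2,\dots,d_n^2$ have multiplicities, since the one-vector-at-a-time equality case of Weyl's inequality does not by itself produce $m$ independent vectors for an eigenvalue of multiplicity $m$. This is repairable by a refined dimension count: for an eigenvalue $\mu$ of $(D+I)^{-1}$ occupying positions $k_0,\dots,k_0+m-1$ of its ordered spectrum, the three subspaces entering the Courant--Fischer proof of Weyl's inequality (eigenvectors of $(D+I)^{-1}$ for eigenvalues $\le\mu$; $\ker P$; eigenvectors of $C$ for its $k_0+m-1+r$ largest eigenvalues) have dimensions summing to at least $2n+m$, so their intersection has dimension at least $m$, and the equality analysis places that whole intersection inside $E_\mu\cap\ker P$. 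Hence $E_\mu\subseteq\ker P$ for every such $\mu$, which gives $\ker P\supseteq\mathrm{span}(e_{r+1},\dots,e_n)$, and the rest of your argument goes through.
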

\smallskip
\begin{proof}
 The idea is to apply \cite[Theorem 1.1]{lewis1996derivatives} to the functional $\Jc$. To this end, we notice that $\Jc$ can be equivalently written as:
\(
\Jc(U) = F\circ \rho_n\circ g(U),
\) 
where: $F:\R_+^n\to \R $ is of the form $F(x) = \sum_{i=1}^n f(x_i)$; $\rho_n$  denotes a 
function that maps an $n\times n$ SPD matrix $A$ to its eigenvalues $\sigma =(\sigma_i)$ (i.e., $\rho_n(A) = \sigma$ and since $F$ is a symmetric function, the order of the eigenvalues is  
irrelevant);
and the mapping $g$ is given by:
\(
g(U) = (D+I)^{-1/2}(U U^\top + I)(D+I)^{-1/2},
\)
for all $U\in\R^{n\times r}$.  Since the function $F\circ\rho_n$ satisfies the 
hypotheses in  \cite[Theorem 1.1]{lewis1996derivatives}, 
$F\circ\rho_n$ is differentiable at the SPD matrix $X$ if and only if 
$F$ is differentiable at $\rho_n(X)$, in which case
\(
(F\circ \rho_n)'(X) = Z S_\sigma Z^\top,
\)
where 
\[
S_\sigma=\mathrm{diag}[\,F'(\rho_n(X))\,] = \mathrm{diag}\{f'(\sigma_1),\ldots,f'(\sigma_n)\},
\] 
and $Z$ is an orthogonal matrix such that $X=Z\, \mathrm{diag}[\,\rho_n(X)\,]Z^\top$. Using the
chain rule, we obtain
\[
\frac{\partial \Jc(U) }{\partial\, U_{ij}} = 
\trace\left(Z S_\sigma Z^\top \,\frac{\partial g(U)}{\partial\,U_{ij}}\right),
\]
which leads to the following gradient of $\Jc$ at $U$:
\[
\Jc'(U) = 2(D+I)^{-1/2}ZS_\sigma (D+I)^{-1/2}Z^\top U= 2 \,W S_\sigma W^\top U,
\]
where the orthogonal matrix $Z$ is such that the matrix $W=(D+I)^{-1/2}Z$ satisfies
\begin{equation}\label{eq:Weq}
(U U^\top +I)W=(D +I)W \Upsilon_\sigma
\end{equation}
with $\Upsilon_\sigma = \mathrm{diag}(\sigma)$. 
Now we show that the functional $\Jc$ is coercive. Let $(U_k)$ be a sequence of matrices
such that $\|U_k\|_F \rightarrow \infty$. Hence, $ \sigma_{max}( g(U_k))\rightarrow\infty$ and so does $\Jc$  
since:
\[
 \Jc(U_k)\ge f(\sigma_{max}( g(U_k))) + (n-1) f(1) 
 \]
and $f(x)\rightarrow\infty$ as $x\rightarrow\infty$. Thus, $\Jc$ is a differentiable coercive functional,
and has a global minimizer $\Uh$ with zero gradient: 
\begin{equation}\label{eq:optcond1}
\Jc'(\Uh)=2 W S_\sigma W^\top \Uh=0.
\end{equation} 
However, since $f\in \Uc$, $f'(x)=0$ iff $x=1$. It follows that condition \eqref{eq:optcond1}
is equivalent to
\begin{equation}\label{eq:optcond2}
(I-\Upsilon_\sigma)W^\top \Uh= 0.
\end{equation}
\eqref{eq:Weq} and \eqref{eq:optcond2} give
\(
\Uh\Uh^\top - D = W^{-\top}\Upsilon^{-1}(\Upsilon-I)W^\top,
\)
and right-multiplication by $\Uh\Uh^\top$ then yields:
\begin{equation}\label{eq:optcond3}
D\,(\Uh\Uh^\top) = (\,\Uh\Uh^\top)^2.
\end{equation} 
 In particular,
if $u$ is an eigenvector of $\Uh\Uh^\top$ with nonzero eigenvalue $\alpha$, then $u$ is an eigenvector
of $D$, $Du = \alpha u$, and thus $\alpha=d_i^2>0$ for some $i$. Thus, any solution of \eqref{eq:optcond3} is such that:
\begin{equation}\label{eq:UhUht}
\Uh\Uh^\top = \sum_{i=1}^{r_k} d_{k_i}^2 e_{k_i}e_{k_i}^\top,
\end{equation}
for some subsequence $(k_\ell)$ of $\{1,\ldots,n\}$  and rank $r_k\le r$. Notice that any $\Uh$ satisfying \eqref{eq:optcond3} is also a critical point according to \eqref{eq:optcond2}.
From \eqref{eq:UhUht} we also find that
$g(\Uh)$ is a diagonal matrix,
\[
g(\Uh) = (D+I)^{-1}\left(\sum_{i=1}^{r_k} d_{k_i}^2 e_{k_i}e_{k_i}^\top + I\right).
\]
The diagonal entries $\sigma_i$, which are the eigenvalues of $g(\Uh)$, are 
given by $\sigma_i = 1$ if $i=k_\ell$ for some $\ell\le r_k$, or $\sigma_i=1/(1+d_i^2)$ otherwise.
In either case, we have $0<\sigma_i\leq 1$ and the monotonicity of $f$ implies that
$\Jc(\Uh)$ is minimized by the subsequence $k_1=1,\ldots,k_r=r$, and by the choice $r_k=r$. This proves \eqref{eq:Uhat}.
It is clear that if the first $r$ eigenvalues of $D$ are distinct, then any minimizer of $\Jc$
satisfies \eqref{eq:Uhat}.
\end{proof}

Most of the objective functions we consider have the same  structure as the loss
function $\Jc$. Hence, the importance of Lemma \ref{lemma:diagcase}.
\medskip

The next lemma shows that searching for a negative update of $\Gpr$ is 
equivalent to looking for a positive update of the prior precision matrix. In particular, the lemma provides a bijection between the two approximation classes,  ${\Mc}_r$ and $ {\Mc}_r^{-1}$, defined by \eqref{eq:class_M} and \eqref{eq:class_Minv}. In what follows, $\Gprs$ is any square root of the prior covariance matrix such that $\Gpr=\Gprs\,\Gprst$.
\medskip

\begin{lemma}[Prior updates]\label{lemma_low_rank}
For any negative semidefinite update of $\Gpr$, 
$\Gposh = \Gpr - KK^\top$ with $\Gposh\succ0$, 
there is a matrix $U$ (of the same rank as $K$) such that
\(
\Gposh = \left(\Gpr^{-1} + UU^\top\right)^{-1}.
\)
The converse is also true.
\end{lemma}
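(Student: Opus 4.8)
The plan is to prove both directions via the Sherman--Morrison--Woodbury identity, after passing through a square-root factorization $\Gpr = \Gprs\Gprst$; the substance of the lemma is then just careful rank bookkeeping.

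First I would treat the forward direction. Given $\Gposh = \Gpr - KK^\top \succ 0$, set $M = \Gprs^{-1}K$, so that $\Gposh = \Gprs(I - MM^\top)\Gprst$. Testing against vectors of the form $\Gprst v$ shows that $\Gposh \succ 0$ iff $I - MM^\top \succ 0$; since the nonzero eigenvalues of $MM^\top$ and $M^\top M$ coincide, this is equivalent to $I - M^\top M \succ 0$, and in particular $I - M^\top M$ is invertible. Woodbury then gives $(I - MM^\top)^{-1} = I + M(I - M^\top M)^{-1}M^\top$, and inverting $\Gposh = \Gprs(I - MM^\top)\Gprst$ (using $\Gpr^{-1} = \Gprs^{-\top}\Gprs^{-1}$) yields
\begin{equation*}
\Gposh^{-1} = \Gpr^{-1} + \Gprs^{-\top}M(I - M^\top M)^{-1}M^\top\Gprs^{-1}.
\end{equation*}
Writing $(I - M^\top M)^{-1} = CC^\top$ with $C$ square and nonsingular and setting $U = \Gprs^{-\top}MC$, one gets $\Gposh = (\Gpr^{-1} + UU^\top)^{-1}$; since $\Gprs^{-\top}$ and $C$ are invertible, $\mathrm{rank}(U) = \mathrm{rank}(M) = \mathrm{rank}(K)$. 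In particular this restricts to a well-defined map ${\Mc}_r \to {\Mc}_r^{-1}$.

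For the converse I would run the same computation in reverse: starting from $B = (\Gpr^{-1} + UU^\top)^{-1}$ with $\mathrm{rank}(U) \le r$, note that $B \succ 0$ is automatic, apply Woodbury to $\Gpr^{-1} + UU^\top$ to obtain $B = \Gpr - \Gpr U(I + U^\top\Gpr U)^{-1}U^\top\Gpr$, and factor $(I + U^\top\Gpr U)^{-1} = DD^\top$ to read off $\Gposh = \Gpr - KK^\top$ with $K = \Gpr U D$ and $\mathrm{rank}(K) = \mathrm{rank}(U)$. Checking that the assignments $K \mapsto U$ and $U \mapsto K$ are mutually inverse—both encoding the Woodbury correspondence between the additive corrections $\Gpr - \Gposh$ and $\Gposh^{-1} - \Gpr^{-1}$—then gives the bijection between ${\Mc}_r$ and ${\Mc}_r^{-1}$ referred to in the text.

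The one point that needs care is the rank accounting: one must verify that the Woodbury correction term does not drop rank. This is exactly where positive-definiteness of $\Gposh$ (respectively of $\Gpr^{-1} + UU^\top$) enters—it forces $I - M^\top M$ (respectively $I + U^\top\Gpr U$) to be invertible, so that conjugating $M^\top(\,\cdot\,)M$ by nonsingular matrices preserves $\mathrm{rank}(M) = \mathrm{rank}(K)$. Everything else is a routine manipulation of the Woodbury identity and can be written out in a few lines.
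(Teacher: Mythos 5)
Your proof is correct and takes essentially the same route as the paper's: both pass to a square-root factorization $\Gpr = \Gprs\Gprst$, reduce to a perturbation of the identity, use positive-definiteness of $\Gposh$ to control the eigenvalues of $\Gprs^{-1}KK^\top\Gprs^{-\top}$, and apply Woodbury in both directions. The only cosmetic difference is that the paper constructs $U$ explicitly from a reduced SVD $ZDZ^\top$ of $\Gprs^{-1}KK^\top\Gprs^{-\top}$ (as $U=\Gprs^{-\top}ZD^{1/2}(I-D)^{-1/2}$) and then verifies via Woodbury, whereas you derive $U$ by directly inverting $I-MM^\top$; your rank bookkeeping is also slightly more explicit than the paper's.
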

\begin{proof}
Let $Z D Z^\top=\Gprs^{-1}KK^\top\Gprs^{-\top} $, $D=\text{diag}\{d_i^2\}$, be a reduced SVD 
of $\Gprs^{-1}KK^\top\Gprs^{-\top} $. Since $\Gposh\succ0$ by assumption, we must have
$d_i^2<1$ for all $i$, and we may thus define $U=\Gprs^{-\top}ZD^{1/2}(I-D)^{-1/2}$.
By Woodbury's identity:
\begin{eqnarray*}
\left(\Gpr^{-1} + UU^\top\right)^{-1} &=& \Gpr-\Gpr U\left(I + U^\top\Gpr^{-1}U  \right)^{-1}U^\top\Gpr
 =  \Gpr -KK^\top = \Gposh.
	\end{eqnarray*}
Conversely, given a matrix $U$, we use again Woodbury's identity to write $\Gposh$
as a negative semidefinite update of $\Gpr$: $\Gposh=\Gpr -KK^\top\succ0$.  
\end{proof}     

\medskip
Now we prove our main result on approximations of the posterior covariance matrix.

\noindent
\textbf{Proof of Theorem \ref{Thm:main_th}.} Given a loss function $L\in\Lc$, our goal is to minimize:
\begin{equation} \label{eq:objective_min}
  L(\Gpos,\Gposh)=\sum_i f\left( \sigma_i \right)
\end{equation}
over $K\in\R^{n\times r}$ subject to the constraint $\Gposh=\Gpr-KK^\top \succ 0$, where $(\sigma_i)$ are the generalized eigenvalues  of the pencil $(\Gpos,\Gposh)$ and $f$ belongs to the class $\,\Uc$ defined by Eq. \eqref{eq:Uclass}. We also write $\sigma_i(\Gpos,\Gposh)$ to specify the pencil  corresponding to the eigenvalues.
By Lemma \ref{lemma_low_rank}, the optimization problem is equivalent to finding   a 
matrix, $U\in\R^{n\times r}$, that minimizes \eqref{eq:objective_min} subject 
to $\Gposh^{-1} = \Gpr^{-1} +UU^\top$.  
Observe that $(\sigma_i)$ are also the eigenvalues of the 
pencil   $(\Gposh^{-1},\Gpos^{-1})$.
\medskip

Let $WDW^\top=\Gprst H\,\Gprs$ with $D=\mathrm{diag}\{\delta_i^2\}$, be an SVD of 
$\Gprst H\,\Gprs$. Then,
by the invariance properties of the generalized eigenvalues we have: 
\begin{eqnarray*}
\sigma_i(\Gposh^{-1},\, \Gpos^{-1}) &=& 
\sigma_i(\, W^{\top}\Gprst\,\Gposh^{-1}\,\Gprs W \,,\, W^{\top}\Gprst\,\Gpos^{-1}\, \Gprs W\,)
 =  \sigma_i(\, Z Z^\top + I , \, D + I\,),
\end{eqnarray*} 
where $Z = W^\top\Gprst U$. Therefore, our goal reduces to finding a matrix, $Z\in\R^{n\times r}$,
that minimizes \eqref{eq:objective_min} with $(\sigma_i)$ being the generalized eigenvalues of the 
pencil $(\, Z Z^\top + I , \, D + I\,)$. 
Applying Lemma \ref{lemma:diagcase} leads to the simple solution:
\(
Z Z^\top = \sum_{i=1}^r \delta_i^2\,e_i e_i^\top, 
\)
where $(e_i)$ are the columns of the identity matrix. In particular, the solution is unique if the first $r$ 
eigenvalues of $\Gprst H\,\Gprs$ are distinct. The corresponding approximation $U U^\top$ is then
\begin{equation}\label{eq:UUt}
U U^\top =\Gprs^{-\top}\,WZ Z^\top W^\top \Gprs^{-1}=\sum_{i=1}^r\delta_i^2 \,\widetilde{w}_i\widetilde{w}_i^\top,
\end{equation}
where $\widetilde{w}_i = \Gprs^{-\top}w_i$ and $w_i$ is the $i$th column of $W$. 
Woodbury's identity gives the corresponding negative update of $\Gpr$ as:   
\begin{equation}  
     \Gposh=\Gpr-KK^\top, \quad KK^\top=\sum_{i=1}^r \delta_i^2 \left( 1 + \delta_i^2 \right)^{-1} \widehat{w_i} \widehat{w_i}^\top
  \end{equation}
with $\widehat{w_i}=\Gprs w_i$. 
\highlight{
Now, it suffices to note that the couples $(\delta_i^2,\widehat{w}_i)$
defined here are precisely the generalized eigenpairs of the pencil $(H,\Gpr^{-1})$.
}
At optimality, $\sigma_i=1$ for $i\le r$ and 
$\sigma_i=(1+\delta_i^2)^{-1}$ for $i>r$, proving \eqref{eq:error_estimate}. $\square$
\medskip

Before proving Lemma \ref{lemma:equivalence}, we recall that the Kullback-Leibler (K-L) 
divergence and the Hellinger distance between two multivariate Gaussians, $\nu_1=\Gauss(\mu, \Sigma_1)$ and 
$\nu_2=\Gauss(\mu, \Sigma_2)$, with the same mean and full rank covariance matrices are given, respectively, by \cite{pardo2005statistical}: 
\begin{equation}\label{eq:KL_formula}
 D_{\rm KL} \left( \nu_1 \|\, \nu_2 \right)= 
 \frac{1}{2} \left[ {\rm trace}\left(\Sigma_2^{-1}\Sigma_1\right) -{\rm rank}(\Sigma_1) - {\rm ln} \left( \frac{ {\rm det}(\Sigma_1)  }{ {\rm det}(\Sigma_2)    }\right) \right]
\end{equation}

\begin{equation}\label{eq:helldet}
 d_{\mathrm{Hell}}\left( \nu_1,\nu_2 \right)=
\sqrt{1-\frac{ |\Sigma_1|^{1/4}\,|\Sigma_2|^{1/4} }  { | \frac{1}{2}\Sigma_1 + \frac{1}{2}\Sigma_2 |^{1/2}  }}.
\end{equation}

\medskip\noindent
{\bf Proof of Lemma \ref{lemma:equivalence}.}
By \eqref{eq:KL_formula}, the K-L divergence between the posterior $\nupos(y)$ and the Gaussian 
approximation $\nuposh(y)$ can be written in terms of the generalized eigenvalues of the pencil $(\Gpos,\Gposh)$ as:
\begin{equation*}
D_{\rm KL} \left( \nupos(y) \|\, \nuposh(y) \right)=\sum_{i} \left(\,\sigma_i-\ln\sigma_i-1\,\right)/2,
\end{equation*}
and since $f(x) = \left( x - \ln x -1 \right)/2$ belongs to $\Uc$, we see that the K-L divergence is  a loss function in the class $\Lc$ defined by \eqref{eq:form_loss_fun}. Hence, Theorem \ref{Thm:main_th} applies and the equivalence between the  two approximations follows trivially.
 An analogous argument holds for the Hellinger distance. 
The squared Hellinger distance between $\nu_\mathrm{pos}(y)$ and $\widehat{\nu}_\mathrm{pos}(y)$
can be written in terms of the generalized eigenvalues, $(\sigma_i)$, of the 
pencil $(\Gpos,\Gposh)$, as:
\begin{equation} \label{eq:Hell_dist_eig}
 d_{\mathrm{Hell}}\left( \nu_\mathrm{pos}(y),\widehat{\nu}_\mathrm{pos}(y) \right)^2= 
1 - 2^{\ell/2} \prod_i \sigma_i^{1/4} \left( 1+\sigma_i\right)^{-1/2} .
\end{equation}
where $\ell$ is the dimension of the parameter space.
Minimizing \eqref{eq:Hell_dist_eig} is equivalent to maximizing $\prod_i \sigma_i^{1/4} \left( 1+\sigma_i\right)^{-1/2}$,
which in turn is equivalent to minimizing the functional:
\begin{equation} \label{eq:Hell_fun2min}
L(\Gpos,\Gposh)= -\sum_i \ln(\, \sigma_i^{1/4} ( 1+\sigma_i )^{-1/2}\,)=
\sum_i \ln(\,2 + \sigma_i + 1/\sigma_i\,)/4
\end{equation}
Since $f(x) = \ln(\,2+x+1/x\,)/4$ belongs to $\Uc$,
Theorem \ref{Thm:main_th} can be applied once again. $\square$
 
\medskip\noindent
{\bf Proof of Corollary \ref{cor:Minv}.}
The proofs of parts (i) and (ii) were already given in the proof of Theorem \ref{Thm:main_th}. 
Part (iii) holds because,
\begin{eqnarray*}
 (1+\delta_i^2)\Gpos\, \widetilde{w}_i &=&  (1+\delta_i^2) ( H + \Gpr^{-1} )^{-1}  \Gprs^{-\top}w_i \\
                                    &=&(1+\delta_i^2)\, \Gprs ( \Gprs^{\top}H\,\Gprs + I )^{-1}w_i 
                                    = \Gprs\, w_i 
                                    = \Gpr\, \widetilde{w}_i , 
\end{eqnarray*} 
because $w_i$ is an eigenvector of 
$(\, \Gprs^{\top}H\,\Gprs + I\,)^{-1}$ with eigenvalue $(1+\delta_i^2)^{-1}$ 
as shown in the proof of Theorem  \ref{Thm:main_th}. $\square$

\medskip
Now we turn to optimality results for approximations of the posterior mean. 
In what follows, let $\Gprs$,\, $\Gobss$, \,$\Gposs$, and $\Gys$ be the matrix square roots of, respectively,\, $\Gpr$,\, $\Gobs$, \,$\Gpos$, and $\Gy := \Gobs + G\,\Gpr \,G^\top$ such that $\Gamma=S\,S^\top$ (i.e., possibly non-symmetric square roots).

Equation \eqref{eq:riskdec} shows that, to minimize $\Ex(\,\left\|Ay - x\right\|^2_{\Gpos^{-1}}\,)$ over $A\in \Ac$,
we need only to minimize 
$\Ex(\,\|\,Ay - \mupos\,\|^2_{\Gpos^{-1}}\,)$.
Furthermore, since $\mupos= \Gpos\, G^\top\Gobs^{-1}\,y$, it follows that
\begin{equation}\label{eq:Brisk1}
\Ex(\,\|\,Ay - \mupos\,\|^2_{\Gpos^{-1}}\,) = 
\|\,\Gposs^{-1 }\,(A-\Gpos\, G^\top\Gobs^{-1})\,\Gys \,\|_F^2,
\end{equation}
We are therefore led to the following  optimization problem:
  \begin{equation} \label{eq:optim_mean}
\min_{A\in \Ac} 
\|\,\Gposs^{-1}A\,\Gys -\Gposs^\top G^\top\Gobs^{-1}\,\Gys\,\|_F .
  \end{equation}
 \highlight
{The following result shows that an SVD of the matrix $\Hhs:=\Gprs^\top \,G^\top \Gobss^{-\top}$ 
can be used to obtain simple expressions for the square roots of $\Gpos$ and $\Gy$.}
\medskip
\begin{lemma}[Square roots] \label{lemma:square_roots}
Let $WDV^\top$ be an SVD of $\Hhs=\Gprs^\top \,G^\top \Gobss^{-\top}$.  Then: 
  \begin{eqnarray}  \label{eq:sqroot_covpos}
     \Gposs & = & \Gprs\, W(\,I + D D^\top\,)^{-1/2}\, W^\top \label{eq:sqroot_pos}\\   
     \Gys  &  = &  \Gobss \, V(\,I + D^\top \!D\,)^{1/2}\, V^\top\label{eq:sqroot_marg}
  \end{eqnarray} 
 are square roots of $\Gpos$ and $\Gy$. 
\end{lemma}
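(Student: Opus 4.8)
The plan is to verify the two defining identities $\Gpos = \Gposs\,\Gposs^\top$ and $\Gy = \Gys\,\Gys^\top$ by a direct computation, taking as the only inputs the relations encoded in the SVD of $\Hhs$. Throughout I would take $WDV^\top$ to be a \emph{full} SVD, so that $W$ and $V$ are square orthogonal matrices ($WW^\top = W^\top W = I$ and $VV^\top = V^\top V = I$) and $D$ is the (generally rectangular) matrix of singular values. No appeal to the Woodbury identity is needed; the inverses below are computed directly.

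First I would record two ``Gram matrix'' identities obtained by multiplying $\Hhs = \Gprs^\top G^\top \Gobss^{-\top}$ by its transpose on either side and using $\Gprs\Gprs^\top = \Gpr$ and $\Gobss\Gobss^\top = \Gobs$:
\[
\Hhs\,\Hhs^\top = \Gprs^\top G^\top\Gobs^{-1}G\,\Gprs = \Gprst H\,\Gprs,
\qquad
\Hhs^\top\Hhs = \Gobss^{-1}G\,\Gpr\,G^\top\Gobss^{-\top}.
\]
Inserting the SVD $\Hhs = WDV^\top$ then gives $\Gprst H\,\Gprs = W(DD^\top)W^\top$ and $\Gobss^{-1}G\,\Gpr\,G^\top\Gobss^{-\top} = V(D^\top D)V^\top$; these are exactly the pieces that appear inside the square-root formulas.

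For $\Gpos$: using $W^\top W = I$ one obtains $\Gposs\,\Gposs^\top = \Gprs\,W(I+DD^\top)^{-1}W^\top\Gprs^\top$, and taking inverses (now using $WW^\top = I$) yields
\[
(\Gposs\,\Gposs^\top)^{-1} = \Gprs^{-\top}W(I+DD^\top)W^\top\Gprs^{-1} = (\Gprs\Gprs^\top)^{-1} + \Gprs^{-\top}(\Gprst H\,\Gprs)\Gprs^{-1} = \Gpr^{-1} + H = \Gpos^{-1},
\]
so $\Gposs\,\Gposs^\top = \Gpos$. Similarly, for $\Gy$: since $V^\top V = I$,
\[
\Gys\,\Gys^\top = \Gobss\,V(I+D^\top D)V^\top\Gobss^\top = \Gobss\Gobss^\top + \Gobss\bigl(V D^\top D\,V^\top\bigr)\Gobss^\top = \Gobs + G\,\Gpr\,G^\top = \Gy.
\]

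There is no genuine obstacle here: the argument is elementary linear algebra with orthogonal matrices and a direct inversion. The only points that require care are the use of the full (rather than reduced) SVD, so that both $WW^\top$ and $W^\top W$ equal the identity and likewise for $V$, and the bookkeeping of dimensions --- $D$, $DD^\top$ and $D^\top D$ act on spaces of (generally) different dimensions, so the two symbols ``$I$'' occurring in the statement denote identity matrices of different sizes. Everything else is substitution.
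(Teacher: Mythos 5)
Your proof is correct and follows essentially the same route as the paper's: both reduce everything to the Gram identities $\Hhs\Hhs^\top = \Gprst H\Gprs$ and $\Hhs^\top\Hhs = \Gobss^{-1}G\Gpr G^\top\Gobss^{-\top}$ and diagonalize via the (full) SVD, the only cosmetic difference being that you verify $\Gposs\Gposs^\top=\Gpos$ by inverting the product whereas the paper factorizes $\Gpos=\Gprs(\Hhs\Hhs^\top+I)^{-1}\Gprst$ directly. Your explicit remarks about using the full SVD and the differing sizes of the two identity matrices are sound points of care that the paper leaves implicit.
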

\begin{proof}
 We can rewrite $\Gpos  =  (\,G^\top \Gobs^{-1} G + \Gpr^{-1}\,)^{-1}$ as
\begin{eqnarray*}
\Gpos & = & \Gprs\,(\,\Hhs \, \Hhs^\top + I\,)^{-1}\,\Gprs^\top  = \Gprs\,W(\,D D^\top + I\,)^{-1} W^\top\Gprs^\top\\
&=& [\,\Gprs\,W(\,D D^\top + I\,)^{-1/2}\, W^\top\,]\,[\,\Gprs\,W(\,D D^\top + I\,)^{-1/2}\, W^\top\,]^\top,
\end{eqnarray*}
which proves \eqref{eq:sqroot_pos}. The proof of \eqref{eq:sqroot_marg} follows similarly using:
$  \Hhs^\top \,\Hhs = \Gobss^{-1 } G \,\Gpr\, G^\top \Gobs^{-\top}$.
\end{proof}

\smallskip
In the next two proofs we use $\left( C \right)_r$ to denote a rank $r$ approximation of the matrix $C$ in the Frobenius norm.

\smallskip\noindent
{\bf Proof of Theorem \ref{thm:mean_approx_lowrank}.}
By \cite[Theorem 2.1]{friedland2007generalized}, an optimal $A\in \Ac_r$ is given by:
                  \begin{equation} \label{eq:estim_I}
                        A= \Gposs\,\left(\,   \Gposs^\top G^\top\Gobs^{-1}\,\Gys \,\right)_r\,\Gys^{-1 }.
                  \end{equation} 
Now, we need some computations to show that \eqref{eq:estim_I} is equivalent to \eqref{eq: mean_approx_lowrank}. 
Using \eqref{eq:sqroot_pos} and \eqref{eq:sqroot_marg} we find 
\(
 \Gposs^\top G^\top\Gobs^{-1}\,\Gys = W(I+DD^\top)^{-1/2} D\, (I+D^\top D)^{1/2}\,V^\top,
\)
and therefore 
\(
(\,\Gposs^\top G^\top\Gobs^{-1}\,\Gys\,)_r = \sum_{i=1}^r \delta_i w_i v_i^\top
\),
\highlight{
where $w_i$ is the $i$th column of $W$,  $v_i$ is the $i$th column of $V$,
and  $\delta_i$ is the $i$th diagonal entry of $D$.
Inserting this back into \eqref{eq:estim_I} yields 
$A = \sum_{i \le r} \delta_i (1+\delta_i^2)^{-1} \Gprs w_i v_i^\top \Gobss^{-1}$.
Now it suffices to note that $\widehat{w}_i := \Gprs w_i $ is a
generalized eigenvector of $(H,\Gpr^{-1})$, that
$\widehat{v}_i := \Gobss^{-\top} v_i$ is a generalized eigenvector of $(G\Gpr G^\top , \Gobs)$, and that
$(\delta_i^2)$ are also eigenvalues of $(H,\Gpr^{-1})$.
}
The minimum Bayes risk is a straightforward computation for the optimal estimator \eqref{eq: mean_approx_lowrank}
using \eqref{eq:Brisk1}.  $\square$

\medskip\noindent
{\bf Proof of Theorem \ref{thm:mean_approx_fullrank}.}
Given $A\in \widehat{\Ac}_r$, we can restate \eqref{eq:optim_mean} as the problem of finding a matrix $B$, of rank at most $r$, 
that minimizes:
  \begin{equation} \label{eq:estim_prob}
  \|\, \Gposs^{-1 } ( \Gpr - \Gpos )\, G^\top \Gobs^{-1} \,\Gys  - \Gposs^{-1 } B 
\left( G^\top \Gobs^{-1}\, \Gys  \right)\,\|_F
  \end{equation}
such that $A=  \left( \Gpr-B \right)G^\top\Gobs^{-1}$. By \cite[Theorem 2.1]{friedland2007generalized}, an optimal $B$ is given by:
     \begin{equation} \label{eq:estim_II_0}
                        B = \Gposs  (\,  \Gposs^{-1 } \, ( \Gpr - \Gpos )\, 
G^\top \Gobs^{-1}\, \Gys \,)_r ( G^\top \Gobs^{-1}\, \Gys  )^\dagger
     \end{equation}
where $\dagger$ denotes the pseudo-inverse operator. A closer look at
\cite[Theorem 2.1]{friedland2007generalized} reveals that another
minimizer of \eqref{eq:estim_prob}, itself not necessarily  of minimum Frobenius norm, is given by:
   \begin{equation} \label{eq:estim_II}
                        B = \Gposs  (\,  \Gposs^{-1 } \, ( \Gpr - \Gpos )\, 
G^\top \Gobs^{-1}\, \Gys \,)_r \,( \Gprs^\top G^\top \Gobs^{-1}\, \Gys)^\dagger\Gprs^\top .
     \end{equation}
By Lemma \ref{lemma:square_roots},
 \begin{eqnarray*}
 \Gprs^\top G^\top \Gobs^{-1}\, \Gys & = & W [\, D \left( I+D^\top D \right)^{1/2} \,] V^\top \\
\Gposs^{-1 } \,\Gpr G^\top \Gobs^{-1} \Gys  & = & W[\,(I+DD^\top)^{1/2}D(I+D^\top D)^{1/2} \,] V^\top\\ 
\Gposs^{-1 } \,\Gpos G^\top \Gobs^{-1} \Gys  & = &  W[\,(I+DD^\top)^{-1/2}D(I+D^\top D)^{1/2} \,] V^\top
 \end{eqnarray*}
and therefore $ ( \Gprs^\top G^\top \Gobs^{-1}\, \Gys  )^\dagger = \sum_{i=1}^{q} \delta_i^{-1} \left(
 1+\delta_i^2\right)^{-1/2} v_i w_i^\top$ for $q=\mbox{rank}(\Hhs)$, whereas 
\[
(\,\Gposs^{-1 } \, ( \Gpr - \Gpos ) G^\top \Gobs^{-1} \Gys \,)_r = \sum_{i=1}^r \delta_i^3 \,w_i v_i^\top.
\]
\highlight{
Inserting these expressions back into \eqref{eq:estim_II}, we obtain:
 \[
  B=\Gprs  \left( \sum_{i=1}^{r} \frac{\delta_i^2}{1+\delta_i^2}\, w_i w_i^\top \right) \Gprs^\top,
 \]
where $w_i$ is the $i$th column of $W$,
 $v_i$ is the $i$th column of $V$,
and  $\delta_i$ is the $i$th diagonal entry of $D$.
Notice that $(\delta_i^2,\widehat{w}_i)$, with $\widehat{w}_i=\Gprs w_i$, are the 
generalized eigenpairs of $(H,\Gpr^{-1})$ (cf.\ proof of Theorem \ref{Thm:main_th}).
Hence, by Theorem \ref{Thm:main_th}, we recognize 
the optimal approximation of $\Gpos$ as $\Gposh=\Gpr-B$.
 Plugging this expression back into \eqref{eq:estim_II} gives \eqref{eq: mean_approx_fullrank}. The 
minimum Bayes risk in (ii) follows readily using the optimal estimator given by                
\eqref{eq: mean_approx_fullrank} in \eqref{eq:Brisk1}. $\square$  
}

 \bibliographystyle{siam}
\bibliography{linearbib}

\end{document}